%

\documentclass{ws-m3as}
\usepackage{mathrsfs}
\usepackage{url,color,epsfig}

\usepackage{graphicx,subfigure}
\usepackage{stfloats}
\usepackage{colortbl}
\usepackage{amsmath}
\usepackage[misc]{ifsym}
\usepackage{latexsym,amsfonts,amssymb}
\usepackage{float}
\begin{document}

\markboth{H. Zhang et al.}{Convergence analysis for time-fractional nonlinear subdiffusion equations }

%
\catchline{}{}{}{}{}
%

\title{Convergence analysis of the time-stepping numerical methods for time-fractional nonlinear subdiffusion equations }

\author{Hui Zhang}

\address{School of Mathematics, Shandong University, \\
 Jinan 250100, PR China
\\
zhangh@sdu.edu.cn}

\author{Fanhai Zeng \footnote{Corresponding author.}
}

\address{School of Mathematics, Shandong University, \\
 Jinan 250100, PR China \\
fanhai\_zeng@sdu.edu.cn}

\author{Xiaoyun Jiang}

\address{School of Mathematics, Shandong University, \\
 Jinan 250100, PR China \\
wqjxyf@sdu.edu.cn}

\author{George Em Karniadakis}

\address{Division of Applied Mathematics and Engineering, Brown University, \\
Providence RI, 02912\\
george\_karniadakis@brown.edu}

\maketitle

\begin{history}
\received{(Day Month Year)}
\revised{(Day Month Year)}
\comby{(xxxxxxxxxx)}
\end{history}

\begin{abstract}
In 1986, Dixon and McKee
developed a discrete fractional Gr\"{o}nwall inequality
[Z. Angew. Math. Mech., 66 (1986), pp. 535--544],
which can be seen as a generalization of the classical discrete  Gr\"{o}nwall inequality.
However, this generalized discrete Gr\"{o}nwall inequality  and its variant
[SIAM J. Numer. Anal., 57 (2019), pp. 1524--1544] have not been widely
applied  in the numerical analysis of the time-stepping methods
for the time-fractional evolution equations.
The main purpose of this paper is to show how to apply the generalized discrete
Gr\"{o}nwall inequality to prove the convergence of a class of time-stepping
numerical methods
for time-fractional nonlinear subdiffusion equations,  including the popular
fractional backward difference type methods of order one and two,
and the  fractional Crank-Nicolson type methods.
We obtain the optimal $L^2$ error estimate in space discretization for multi-dimensional problems.
The convergence of the fast time-stepping numerical methods is also proved in a simple manner.
The present work  unifies the convergence analysis of several existing  time-stepping schemes.
Numerical examples are provided to verify the effectiveness of the present method.
\end{abstract}

\keywords{Time-fractional nonlinear subdiffusion equations; discrete fractional
Gr\"{o}nwall inequality; fast time-stepping methods; convergence.}

\ccode{AMS Subject Classification: 26A33, 65M06, 65M12, 65M15, 35R11}
\section{Introduction}\label{intro}
The aim of this paper is to analyze the convergence of the time-stepping  numerical schemes
for the following time-fractional nonlinear  subdiffusion equation with a reaction term $f(u)$:
\begin{equation}\label{e1.1}\left\{\begin{aligned}
&{}_{0}^{C} D_{t}^{\alpha}u=\Delta u+f(u), &&  \text{ in } \Omega\times (0,T],T>0,\\
&u=u_0,&&  \text{ in } \bar{\Omega},\\
&u=0, &&  \text{ on } \partial\Omega\times [0,T],
\end{aligned}\right.\end{equation}
where   $\Omega$ is a convex domain in $\mathbb{R}^d$ with a smooth boundary, $\Delta$ is the Laplace operator
defined on $\Omega$ with a homogenous boundary condition,
and $_{0}^{C}D_{t}^{\alpha}u$ is the Caputo  fractional derivative of order $0<\alpha<1$, which is  defined by
\begin{equation}\begin{aligned}\label{e1.133}
{}_{0}^{C}D_{t}^{\alpha}u(t)=\frac{1}{\Gamma(1-\alpha)}
\int_{0}^{t}u'(s)(t-s)^{-\alpha}\mathrm{d}s.
\end{aligned}\end{equation}
We employ the Galerkin finite element method (FEM) in space approximation.
The spatial approximation can also be performed  by other methods, for example, if  $\Omega$ is regular, then
finite difference methods or spectral methods can be applied.

The non-locality of the fractional derivative operator \eqref{e1.133} causes
a lot of difficulty for solving \eqref{e1.1}.
Generally speaking, the approximation
of  ${}_{0}^{C}D_{t}^{\alpha}u(t)$ at $t=t_n$  can be written as
\begin{equation}\label{eq:wnj}
\sum_{k=0}^nw_{n,k}u^k,\quad 0\leq k \leq n,\ 0<n\leq n_T,
\end{equation}
where the coefficients $w_{n,k}$ are determined by the specific numerical method
for the approximation of the fractional operator.\cite{Alikhanov15,Hongwei2017,LinXu07,Lub86,SunWu06,YanKF18,ZhuXu2019}
Direct computation of \eqref{eq:wnj} is costly,
requiring $O(n_T)$ active memory
and $O(n_T^2)$  operations. The computational difficulty
can be resolved by developing fast memory-saving algorithms.\cite{BafHes17b,BanjaiLopez18,ChenZZCW19,GuoZeng19,JiangZZZ17,JingLi10,LopLubSch08,SunNieDeng19,ZengTBK2018,ZhuXu2019}
The non-locality of fractional operators also makes the
numerical analysis of fractional partial differential equations (PDEs)
much more complicated than that of local PDEs.  As is well known,
the discrete Gr\"{o}nwall  inequality (see Lemma \ref{le3.1} with $\alpha\to 1$)
provides a powerful tool to analyze the stability and convergence
of the numerical methods for  integer-order PDEs.
How to develop and use the discrete fractional Gr\"{o}nwall type inequalities
to analyze the numerical methods for   fractional PDEs
has been reported much less and this is the topic of this current work.



The discrete fractional Gr\"{o}nwall type inequalities based on the specific time-stepping
methods have been established by some researchers.\cite{JinLiZhou18b,LiaoLZ18,LiaoWZ19,YangZeng2019}
Jin et al.\cite{JinLiZhou18b}
established a fractional version of the discrete Gr\"{o}nwall type inequality
based on the convolution quadrature generated by the fractional backward difference formula
of order $p$ (FBDF-$p$) and the L1 formula. In Refs. \refcite{LiaoLZ18} and \refcite{LiaoWZ19}, the authors developed the
discrete fractional Gr\"{o}nwall type inequalities based on the interpolation method,
such as the L1 method generated by linear interpolation\cite{LinXu07,StynesOG17,SunWu06}
and the  Alikhanov formula generated by quadratic interpolation.\cite{Alikhanov15}
These Gr\"{o}nwall type inequalities have been applied to analyze the convergence of
numerical methods for a variety of  nonlinear fractional
PDEs.\cite{DuYang2020,JinLiZhou18b,LiLiaoSWZ18,LiZhangZhang18,LiaoYZ19}

In addition to the aforementioned  discrete fractional Gr\"{o}nwall type inequalities,
there exists  a generalized  discrete Gr\"{o}nwall  inequality (see Lemma \ref{le3.1}) proposed in 1986  by Dixon and McKee (see Ref. \refcite{DixonMcKee86}), which can be seen as a generalization of the classical discrete Gr\"{o}nwall  inequality and is independent of specific time-stepping methods.
The generalized  discrete Gr\"{o}nwall  inequality and its variants  have been  widely
applied to analyze the  convergence of the numerical methods for the fractional ordinary differential equations and the integral equations with weakly singular kernels.\cite{BrunnerTang89,CaoXu13,LiYiChen16,ZengZK17} To the best of the authors' knowledge, this generalized  discrete Gr\"{o}nwall inequality has not been widely applied to analyze the convergence of time-stepping numerical methods for the time-fractional PDEs except for
some limited works.\cite{AL-MaskariKaraa19,GonPal99,LeMcLMus16} The goal of this work is to show how to apply the generalized discrete Gr\"{o}nwall inequality to prove the convergence of a class of time-stepping numerical methods for time-fractional nonlinear PDEs of the form \eqref{e1.1}.

The main contributions of this work are listed below:
\begin{itemlist}
\item The generalized discrete Gr\"{o}nwall's inequality is applied
to prove the convergence of a class of fully implicit time-stepping Galerkin
FEMs for \eqref{e1.1}, where the time direction  is approximated
by the convolution quadrature with correction terms.
The use of the generalized discrete Gr\"{o}nwall  inequality in this paper is very simple and straightforward; see Section \ref{sec-4}.
\item The convergence of the fast time-stepping Galerkin FEMs for
  \eqref{e1.1} is proved.  Our proof is based on the convergence
of the direct computational method,
which is   simpler than that of the existing fast methods; see Ref. \refcite{SunNieDeng19}.
\end{itemlist}

To the best of   authors' knowledge,
this is the first work that unifies the convergence analysis of the
popular (fast) time-stepping numerical schemes  for solving  \eqref{e1.1},
including the fractional backward difference type methods of order one and two,\cite{Lub86,TianZD15} the fractional Crank--Nicolson
type methods,\cite{JinLiZhou18c,ZengLLT15}
and the recently developed BN-$\theta$ method,\cite{Yinbaoli2020}
see Section \ref{sec-5}.

The convolution quadrature with correction terms has been widely  applied to resolve
the initial singularity of the time-fractional PDEs.\cite{CueLubPal06,JinLiZhou18b,WangZhou20,Yinbaoli2020}
However, the convergence analysis of time-stepping schemes with correction terms
is limited; the current paper presents an approach to analyze the convergence of this kind
time-stepping numerical methods.
The present convolution quadrature   with correction terms
is different from the ones in Refs. \refcite{JinLiZhou17} and \refcite{YanKF18},
 where the first several steps of the schemes are  corrected.

The main difference of the present work from the previous ones \cite{JinLiZhou18b,LiaoLZ18,LiaoWZ19,YangZeng2019} is that
we adopt the generalized discrete Gr\"{o}nwall inequality
to prove the convergence of the numerical methods.
Our analysis is simple and straightforward, and can be extended to
analyze the numerical methods for a broader class of time-fractional
evolution equations.
\section{The numerical schemes}\label{sec-3}
\subsection{Discretization of the Caputo fractional derivative}
The interval $[0,T]$ is divided into $n_T\in \mathbb{N}$  subintervals
with a time step size $\tau=T/n_T$ and
grid points $t_n=n\tau,0\leq n \leq n_T$. Denote by $u^n=u^n(\cdot)=u(\cdot,t_n)$
for notational simplicity.

Assume that the solution $u$ of \eqref{e1.1} satisfies
\begin{equation} \label{solution-u}
u(t)-u(0)=\sum_{k=1}^m\hat{u}_kt^{\delta_k} + \tilde{u}(t)t^{\delta_{m+1}},
\quad 0\leq t\leq T,
\end{equation}
where $0<\delta_1<\cdots<\delta_m<\delta_{m+1}$ and $\tilde{u}(t)\in L^2([0,T];X)$.
The assumption \eqref{solution-u} is used in obtaining the truncation error in time
discretization, which holds for the linear equation of the form \eqref{e1.1}.
For example, if $f=u$, then $\delta_k=k\alpha$; see Ref. \refcite[Theorem 5]{Luchko12}.
If $f=g(\cdot,t)$, $g$ is sufficient smooth in time, then
$\delta_k \in \{\delta_{\ell,j}|\delta_{\ell,j}=\ell+j\alpha,\ell\in \mathbb{Z}^+,j\in \mathbb{N}\}$;
see Refs. \refcite{CueLubPal06} and \refcite{Luchko12}.
For  the time-fractional Allen--Cahn equation, i.e., $f=u(1-u^2)$,
one has $\delta_1=\alpha$; see Ref. \refcite{WangZhou20}.

%

The following lemma is a reformulation of Lemma 3.5 in Ref. \refcite{Lub86}, which
is useful in the construction of the numerical method for
the Caputo fractional operator.
\begin{lemma}[see Ref. \refcite{Lub86}]\label{lem2-1}
Let $u(t)=t^{\gamma},\gamma>-1$ and $0\leq  \alpha \leq 1$. Then
$${}_{0}^{RL}D_{t}^{\alpha}u(t)|_{t=t_n} = \tau^{-\alpha}\sum_{k=1}^n\omega^{(\alpha)}_{n-k}u(t_k)
+ O(\tau^pt_{n}^{\gamma-p-\alpha}) + O(\tau^{\gamma+1}t_{n}^{-\alpha-1}),$$
where ${}_{0}^{RL}D_{t}^{\alpha}$ is the Riemann--Liouville   fractional derivative operator defined by
$$ {}_{0}^{RL}D_{t}^{\alpha}u(t)
= \frac{1}{\Gamma(1-\alpha)} \frac{d}{dt}\int_0^{t}(t-s)^{-\alpha}u(s)ds,$$
the convolution weights $\omega^{(\alpha)}_{n}$ are the coefficients of the Taylor expansion of the
generating function $\omega^{(\alpha)}(z) = \sum_{n=0}^{\infty}\omega^{(\alpha)}_{n}z^n$, $p$ is the convergence
order that depends on the generating function $\omega^{(\alpha)}(z)$.
\end{lemma}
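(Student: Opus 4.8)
The plan is to reduce the statement to an analysis of generating functions, following Lubich's convolution-quadrature methodology. First I would record the exact value of the Riemann--Liouville derivative of a power,
$${}_{0}^{RL}D_{t}^{\alpha}t^{\gamma}=\frac{\Gamma(\gamma+1)}{\Gamma(\gamma+1-\alpha)}\,t^{\gamma-\alpha},$$
which follows from the definition of ${}_{0}^{RL}D_{t}^{\alpha}$ together with the Beta-integral identity. Writing $u(t_k)=(k\tau)^{\gamma}=\tau^{\gamma}k^{\gamma}$, the discrete sum becomes $\tau^{\gamma-\alpha}\sum_{k=1}^{n}\omega^{(\alpha)}_{n-k}k^{\gamma}$, so the quantity to be estimated is exactly $\tau^{\gamma-\alpha}a_n$, where $a_n$ is the $n$-th Taylor coefficient of the product $\omega^{(\alpha)}(\xi)F_{\gamma}(\xi)$ and $F_{\gamma}(\xi)=\sum_{k\ge 1}k^{\gamma}\xi^{k}$ is the (poly-logarithmic) generating function of $\{k^{\gamma}\}$. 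In parallel, the exact derivative $\tfrac{\Gamma(\gamma+1)}{\Gamma(\gamma+1-\alpha)}t_{n}^{\gamma-\alpha}$ is, up to the same prefactor $\tau^{\gamma-\alpha}$, the $n$-th coefficient of $\tfrac{\Gamma(\gamma+1)}{\Gamma(\gamma+1-\alpha)}F_{\gamma-\alpha}(\xi)$. Thus the whole estimate is encoded in the difference of these two generating functions near their common dominant singularity $\xi=1$.

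The second step is to expand both generating functions as $\xi=e^{-s}\to 1$. The order-$p$ consistency of the quadrature means precisely that $\omega^{(\alpha)}(e^{-s})=s^{\alpha}\bigl(1+O(s^{p})\bigr)$, while the classical asymptotic expansion of the poly-logarithm gives $F_{\gamma}(e^{-s})=\Gamma(\gamma+1)s^{-\gamma-1}+\sum_{j\ge 0}\tfrac{(-1)^{j}}{j!}\zeta(-\gamma-j)s^{j}$, with an analogous expansion for $F_{\gamma-\alpha}$. Multiplying out, the leading singular terms cancel exactly against those of the exact generating function, and the remainder splits into two pieces: a factor $s^{\alpha-\gamma-1}O(s^{p})$ coming from the consistency defect, and a factor $\zeta(-\gamma)s^{\alpha}+O(s^{\alpha+1})$ coming from the regular (analytic) part of $F_{\gamma}$, which reflects the non-smoothness of $t^{\gamma}$ at the origin.

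The third step is to transfer these singular behaviours back to coefficient asymptotics. A term behaving like $s^{-\beta}$ near $\xi=1$ contributes a coefficient of size $n^{\beta-1}$ up to a Gamma factor; applied to $s^{-(\gamma+1-\alpha-p)}$ this yields a coefficient of order $n^{\gamma-\alpha-p}$, which after restoring the prefactor $\tau^{\gamma-\alpha}$ and using $t_n=n\tau$ becomes exactly $O(\tau^{p}t_{n}^{\gamma-p-\alpha})$; applied to $s^{\alpha}$ it yields a coefficient of order $n^{-\alpha-1}$, which becomes $O(\tau^{\gamma+1}t_{n}^{-\alpha-1})$. Summing the two contributions produces the claimed estimate, and identifies the first error term as the genuine truncation error of the order-$p$ scheme and the second as the boundary defect induced by the initial singularity.

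The main obstacle will be making the coefficient extraction rigorous and the bounds uniform in $n$. This requires the structural hypotheses on $\omega^{(\alpha)}(z)$ underlying the convolution quadrature---analyticity and non-vanishing in a neighbourhood of the closed unit disk, with the single singularity at $\xi=1$---so that the contour in Cauchy's formula $a_n=\frac{1}{2\pi i}\oint \omega^{(\alpha)}(\xi)F_{\gamma}(\xi)\,\xi^{-n-1}\,d\xi$ may be deformed onto a Hankel-type path indenting $\xi=1$ and the tail controlled, turning the formal singularity expansions above into honest $O$-estimates. Care is also needed at the exceptional values of $\gamma$ and $\gamma-\alpha$ (non-negative integers), where the poly-logarithm expansion acquires logarithmic terms; these are harmless for the stated bounds but must be checked separately.
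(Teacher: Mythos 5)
The paper itself offers no proof of this lemma---it is stated as a reformulation of Lemma 3.5 of Lubich's 1986 paper (Ref.~\refcite{Lub86})---so the relevant benchmark is Lubich's original argument, and your proposal reconstructs essentially that argument: the exact power rule for the Riemann--Liouville derivative, the generating-function formulation of the quadrature sum as the $n$-th coefficient of $\omega^{(\alpha)}(\xi)F_{\gamma}(\xi)$, the singular expansions $\omega^{(\alpha)}(e^{-s})=s^{\alpha}\bigl(1+O(s^{p})\bigr)$ and $F_{\gamma}(e^{-s})=\Gamma(\gamma+1)s^{-\gamma-1}+\zeta(-\gamma)+O(s)$ at the common singularity $\xi=1$, and Hankel-contour/transfer estimates converting the two residual singular terms $O(s^{p+\alpha-\gamma-1})$ and $\zeta(-\gamma)s^{\alpha}$ into exactly the rates $O(\tau^{p}t_{n}^{\gamma-p-\alpha})$ and $O(\tau^{\gamma+1}t_{n}^{-\alpha-1})$. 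Your identification of the structural hypotheses needed to make the coefficient extraction rigorous (stability, i.e., analyticity and non-vanishing of $\omega^{(\alpha)}$ near the closed unit disk except at $\xi=1$) and of the exceptional integer values of $\gamma$ and $\gamma-\alpha$ matches the assumptions under which Lubich's lemma is proved, so the plan is sound and follows the same route as the cited proof.
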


The widely used generating functions $\omega^{(\alpha)}(z)$ in fractional calculus
include the fractional backward difference formula of order $p$ (FBDF-$p$) and
the generalized Newton-Gregory formula of order $p$ (GNGF-$p$), which are given by
\begin{equation}\label{genf:wz}
\omega^{(\alpha)}(z)=
\left\{\begin{aligned}
&\left(\sum_{k=1}^{p} \frac{1}{k}(1-z)^{k}\right)^{\alpha},&\quad \text{FBDF-$p$},\\
&(1-z)^{\alpha} \sum_{k=1}^{p} g_{k-1}(1-z)^{k-1},&\quad \text{GNGF-$p$}.
\end{aligned}\right.\end{equation}
where $g_{0}=1,  g_{1}=\frac{\alpha}{2}, g_{2}=\frac{\alpha^{2}}{8}+\frac{5 \alpha}{24}$, $g_k(k\geq 3)$ can be found in Ref.
\refcite{GuoZeng19}. Interested readers can refer to Ref. \refcite{Lub86} for more generating functions.


Using the  relationship $_{0}^{C}D_{t}^{\alpha}u(t) =\,  _{0}^{RL}D_{t}^{\alpha}(u-u(0))(t)$ (see Ref. \refcite{Pod-B99})
and Lemma \ref{lem2-1}, we can obtain
\begin{equation}\begin{aligned}\label{e1.3}
\left[{}_{0}^{C} D_{t}^{\alpha}u(t)\right]_{t=t_n} =D_{\tau}^{\alpha,m}u^n -   {R}^n,
\end{aligned}\end{equation}
where $R^n$ is the  truncation error in time and
\begin{equation}\begin{aligned}\label{Dtau}
D_{\tau}^{\alpha,m}u^n=\frac{1}{\tau^{\alpha}}\sum_{j=0}^{n}\omega_{n-j}^{(\alpha)} (u^j-u^0)
+\frac{1}{\tau^{\alpha}}\sum_{j=1}^{m}w_{n,j}^{(m)} (u^{j}-u^0).
\end{aligned}\end{equation}
The starting weights $w_{n,j}^{(m)}$ in \eqref{Dtau} are chosen such that
\begin{equation}\label{cond-2}
D_{\tau}^{\alpha,m}u^n=\left[{}_{0}^{C} D_{t}^{\alpha}u(t)\right]_{t=t_n},\quad u=t^{\sigma_k},\quad 1\leq k\leq m.
\end{equation}
If $u$ satisfies \eqref{solution-u} and $\sigma_k=\delta_k,1\leq k\leq m+1$, then   Lemma \ref{lem2-1} and \eqref{cond-2}
yield the truncation error $ {R}^n$ in \eqref{e1.3}, which satisfies
\begin{equation}\label{time-error-2}
{R}^n = O(\tau^{p}t_n^{\sigma_{m+1}-p-\alpha})+O(\tau^{\sigma_{m+1}+1}t_n^{-\alpha-1}),
\end{equation}
where $p$ is the convergence order that depends the   generating function $\omega^{(\alpha)}(z)$.

The quadrature weights $\omega_{n}^{(\alpha)}$ in \eqref{Dtau} can be derived much easily.
For  $\omega^{(\alpha)}(z)$ defined by \eqref{genf:wz}, the recurrence formula (5) in Ref. \refcite{DieFFW06}
can be used to  obtain $\omega_{n}^{(\alpha)}$.
One can also used \eqref{s6-0-1}  to calculate $\omega_{n}^{(\alpha)}$ for
$n\ge n_0$, $n_0$ is a suitable positive integer.

Next, we  give a criterion to select $\sigma_k$  and derive the starting weights $w_{n,j}^{(m)}(1\leq j \leq m)$
when applying  the time discretization method \eqref{Dtau}.

\textbf{1) Determine} $\sigma_k$ \textbf{in \eqref{Dtau}}.

From the construction of the method \eqref{Dtau}, the optimal choice of $\sigma_k$
should be $\sigma_k=\delta_k$, where $\delta_k$ are the regularity indices of
the analytical solution, see \eqref{solution-u}. However, we may not know $\delta_k$
for a generalized nonlinear term $f(u)$.

If  $f(z)$ is sufficiently smooth, then $f(u)$ can be decomposed into   as $f=f_1+f_2$, where
$f_1(u) = f(u_0) + f'(u_0)(u-u_0)$ and $f_2(u)=f(u)-f_1(u)$.
Let $v$ be the solution of the following linear system
\begin{equation}\label{eq:eq-v}
{}_{0}^{C} D_{t}^{\alpha}v=\Delta v+f_1(v), \qquad (x,t)\in \Omega\times (0,T],T>0
\end{equation}
subject to the initial condition $v(x,0)=u_0(x), x \in \bar{\Omega}$ and the homogenous boundary conditions.
Let  $w$ be the solution of the following nonlinear system
  \begin{equation}\label{eq:eq-w}
{}_{0}^{C} D_{t}^{\alpha}w=\Delta w+ f(v+w) - f_1(v), \qquad (x,t)\in \Omega\times (0,T]
\end{equation}
subject to the homogenous initial and boundary conditions.
Then,   the solution of \eqref{e1.1} can be expressed as $u=v+w$.
It is known  that the analytical solution of the linear system \eqref{eq:eq-v}
satisfies $v(t)-v(0)=\sum_{k=1}^m\hat{v}_kt^{\delta_k} + \tilde{v}(t)t^{\delta_{m+1}}$, where  $\delta_k=k\alpha$ for $f(0)=0$
(see Ref. \refcite{Luchko12})
and $\delta_k \in \{\delta_{\ell,j}|\delta_{\ell,j}=\ell+j\alpha,\ell\in \mathbb{Z}^+,j\in \mathbb{N}\}$
for $f(0)\neq 0$ (see Ref. \refcite{CueLubPal06}).
From Ref. \refcite{WangZhou20}, one knows that $w(t)$ has higher regularity than $v(t)$ and
$w(0)={}_{0}^{C} D_{t}^{\alpha}w(t)|_{t=0}=0$.  Therefore,
for a smooth $f(z)$, we have $\delta_1=\alpha$, but $\delta_k$ for $k\geq 2$ need to be determined by further
investigation.

Now, we know that $u(t)=v(t)+w(t)$, the regularity of $v(t)$ is known and $w(t)$ has higher regularity
than $v(t)$. Hence,  it is reasonable to select $\sigma_k$   according to
the regularity of $v(t)$, which is adopted in the current paper, and it performs well;
see numerical results in Section \ref{sec:numer}.

\textbf{2) Derive the starting weights} $w_{n,j}^{(m)}(1\leq j \leq m)$ \textbf{in \eqref{Dtau}}.

For a fixed $n$, the starting weights  $w_{n,j}^{(m)}$  are chosen such that
\eqref{cond-2} holds, which yields the following linear system\cite{Lub86}
 \begin{equation}\label{startw}
\sum_{j=1}^{m} t_j^{\sigma_k}w_{n,j}^{(m)}
=\frac{\Gamma(\sigma_k+1)}{\Gamma(\sigma_k+1-\alpha)}t_n^{\sigma_k-\alpha}
- \sum_{j=0}^{n}\omega_{n-j}^{(\alpha)} t_j^{\sigma_k},\quad 1\leq k \leq m.
\end{equation}
Clearly, \eqref{startw} is a  Vandermonde type system, which  may lead  to
inaccurate starting weights  that may harm the accuracy of the numerical method.\cite{DieFFW06,Lub86,ZengZK17}
Diethelm et al.\cite{DieFFW06} discussed in detail how to solve the linear system \eqref{startw}
and how the starting weights and values affect the accuracy of the numerical method.

Figure \ref{fig1} (a) shows the condition number of \eqref{startw} for different fractional orders $\alpha$ when $\sigma_k=k\alpha$.
We can see that for a smaller $\alpha$, i.e., $\alpha=0.1,0.2$, the condition number of \eqref{startw} increases fast as
$m$ increases up to a certain number, then it  increases slowly. For a larger $\alpha$, i.e., $\alpha=0.8,1$,
the condition number   increases  as $m$ increases.

\begin{figure}
\centering
\begin{minipage}[c]{0.5\textwidth}
\centering
\includegraphics[height=4cm,width=6cm]{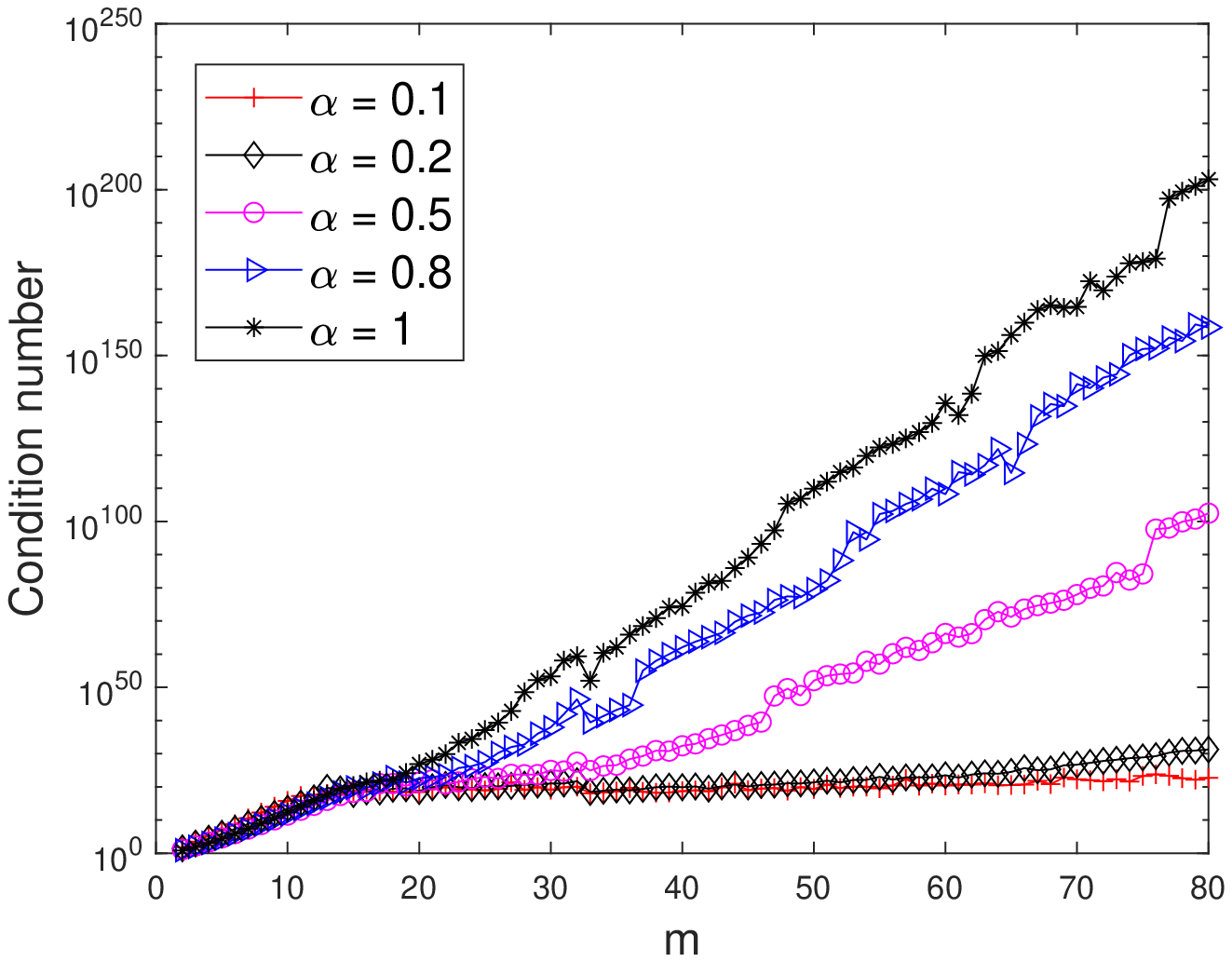}
\par {(a) The condition number of \eqref{startw}.}
\end{minipage}%
\begin{minipage}[c]{0.5\textwidth}
\centering
\includegraphics[height=4cm,width=6cm]{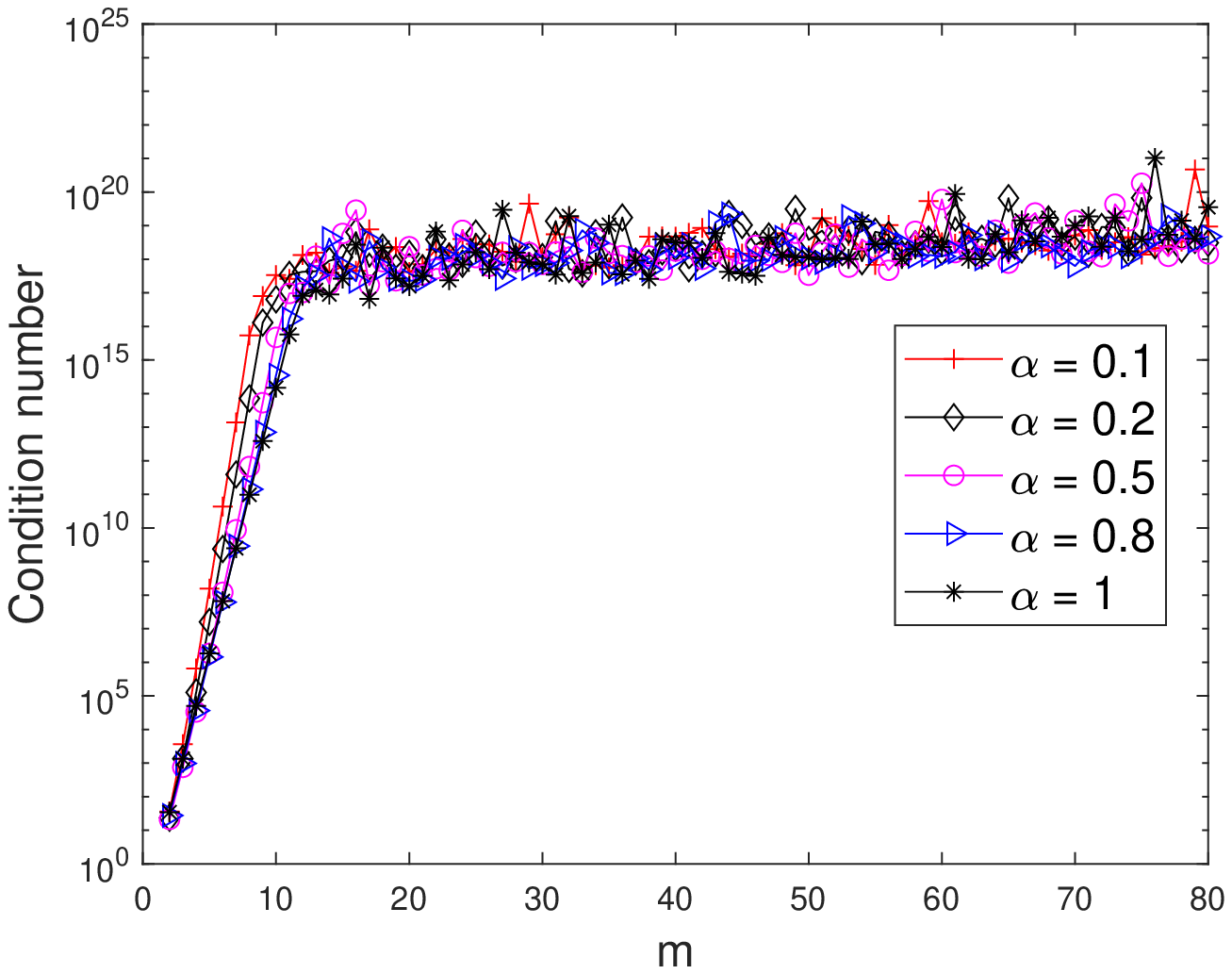}
\par {(b) The condition number of \eqref{startw-2}.}
\end{minipage}
\caption{The condition numbers of different linear systems, $\sigma_k=k\alpha$, $\tau=0.1$.
In practice, a few number of correction terms are enough to obtain accurate numerical solutions.
We take $m\leq 4$ in numerical simulations in Section \ref{sec:numer}, so that
the system \eqref{startw} is relatively well-conditioned and
accurate starting weights can be derived.}
\label{fig1}
\end{figure}


One way to reduce the condition number of \eqref{startw} is to find a suitable preconditioner,
which is not trivial.\cite{DieFFW06}
If we can find a new basis function
$\phi_k(t)$, satisfying
$$\mathrm{span}\{t^{\sigma_1},t^{\sigma_2},\cdots,t^{\sigma_m}\}
=\mathrm{span}\{\phi_1(t),\phi_2(t),\cdots,\phi_m(t)\},$$
then the system \eqref{startw} is equivalent to the following new system
\begin{equation}\label{startw-2}
\sum_{j=1}^{m} \phi_k(t_j) w_{n,j}^{(m)}
= \int_0^{t_n} \frac{(t_n-s)^{-\alpha}}{\Gamma(1-\alpha)}\phi'_k(s)ds
- \sum_{j=0}^{n}\omega_{n-j}^{(\alpha)} \phi_k(t_j),\quad 1\leq k \leq m.
\end{equation}
The condition number of the new system \eqref{startw-2} may become smaller if
the suitable basis functions  $\phi_k(t)$ are chosen.

Figure \ref{fig1} (b) shows the condition number of \eqref{startw-2} for $\tau=0.1$,
where we choose $\phi_k(t)=L^{(0)}_k(t^{\alpha})-L^{(0)}_k(0)$, $L^{(\beta)}_k(t)(\beta > -1)$ is the
generalized Laguerre polynomial.\cite{ShenTW-B11}
We can see that for   $\alpha=0.1,0.2,0.5,0.8,1$, the condition number of \eqref{startw-2}
increases as $m$ increases until it becomes about $10^{20}$ for
$m>10$. We have also tested other fractional orders $\alpha \in (0,1)$ and $\tau<0.1$,
and we have obtained results similar to the ones obtained in  Figure \ref{fig1} (b).
The condition number of \eqref{startw-2} is about $10^{20}$,
the Multiprecision Computing Toolbox for MATLAB\cite{MCTMatlab} can be used  to solve \eqref{startw-2},
which is not costly.

In this paper, we use at most four correction terms in numerical simulations, so that
the system \eqref{startw} is relatively well-conditioned, which can be solved directly.

\subsection{The fully discrete scheme}
Let $\mathcal{T}_h$ be a family of regular (conforming) triangulations of the domain $\bar{\Omega}$
and $h=\max_{K\in \mathcal{T}_h}(\mathrm{diam} K)$.
The linear finite element space $X_h$ is defined as
\begin{equation}\label{Xh-1}
X_h=\{v_h\in H^1_0(\Omega): v_h|_{T}\text{ is a linear function } \forall T\in \mathcal{T}_h\}.
\end{equation}
Define the orthogonal projectors $P_h:L^2(\Omega)\to X_h$ and $\pi_h^{1,0}:H_0^1(\Omega)\to X_h$  as
\begin{align*}
(P_hu, v)&=(u,v),\quad   \quad\forall v \in X_h,\\
(\nabla\pi_h^{1,0}u,\nabla v)&=(\nabla u,\nabla v),\quad  \quad\forall v \in X_h.
\end{align*}
where $(\cdot,\cdot)$ is the inner product in $L^2(\Omega)$ equipped with
the  $L^2$ norm $\|\cdot\|_{L^2(\Omega)}$ and the $L^{\infty}$ norm
$\|\cdot\|_{L^{\infty}(\Omega)}$. Denote by $H^k(\Omega)$ as the Sobolev space equipped with the norm
$\|\cdot\|_{H^k(\Omega)}$, $k\geq 0$. For convenience, we  denote
$\|\cdot\|=\|\cdot\|_{L^{2}(\Omega)}$.

Using \eqref{e1.3}, we can derive the time discretizaion for \eqref{e1.1} as
\begin{equation}\begin{aligned}\label{TD-1}
D_{\tau}^{\alpha,m}u^n =\Delta u^n + f(u^n) + R^n.
\end{aligned}\end{equation}

From \eqref{TD-1}, the fully  discrete Galerkin FEM for
\eqref{e1.1} may be given as: Given $u^0_h=\pi_{h}^{1,0} u_0$, find $u_h^n\in X_h$ for $n\geq 1$, such that
\begin{equation}\label{FLMM-2}
(D_{\tau}^{\alpha,m}u_h^n,v)+(\nabla u_h^n,\nabla v)  =\left(P_{h}f(u^{n}_h),v\right),\quad \forall v\in X_h.
\end{equation}

In order to obtain the starting values $u_h^{n}(1\leq n \leq m)$,
we can let  $n=1,2,\cdots,m$ in \eqref{FLMM-2}, which  yields a  system of equations, its matrix form   reads
\begin{equation}\label{start-uh}
A^{(m)}\left[
  \begin{array}{c}
    (u_h^1,v)\\
    (u_h^2,v)  \\
    \vdots\\
     (u_h^m,v)    \\
  \end{array}
\right]
+ \tau^{\alpha}\left[
  \begin{array}{c}
    (\nabla u_h^1,\nabla v)\\
    (\nabla u_h^2,\nabla v)  \\
        \vdots\\
     (\nabla u_h^m,\nabla v)    \\
  \end{array}
\right]
= \tau^{\alpha}\left[
  \begin{array}{c}
    (P_{h}f(u^{1}_h),v)\\
    (P_{h}f(u^{2}_h),v)  \\
        \vdots\\
     (P_{h}f(u^{m}_h),v)    \\
  \end{array}
\right], \quad \forall v\in X_h,
\end{equation}
where    $A^{(m)}=\Lambda_1 V \Lambda_2V^{-1}$,
$V\in \mathbb{R}^{m\times m}$ with entries $(V)_{i,j}= i^{\sigma_j},1\leq i,j \leq m$,  $\Lambda_1=diag(1,2^{-\alpha},\cdots,m^{-\alpha})$,
$\Lambda_2=diag(\gamma_1,\gamma_2,\cdots,\gamma_m)$ with $\gamma_j=\frac{\Gamma(\sigma_j+1)}{\Gamma(\sigma_j+1-\alpha)},1\leq j \leq m$.

Generally speaking, if  $A^{(m)}+(A^{(m)})^T$ is positive definite,   then
\eqref{start-uh}  permits a unique solution under some suitable conditions,
which is true for $m=1$.
For $m=2$,  one has
\begin{equation*}
A^{(2)}=\frac{1}{2^{\sigma_2}-2^{\sigma_1}}
\left[
  \begin{array}{ll}
    \gamma_12^{\sigma_2}-\gamma_2 2^{\sigma_1}& \gamma_2-\gamma_1\\
    (\gamma_1-\gamma_2)2^{\sigma_1+\sigma_2-\alpha} &   \gamma_22^{\sigma_2-\alpha}- \gamma_12^{\sigma_1-\alpha} \\
  \end{array}
\right].
\end{equation*}
It is a tedious task to find a  condition to guarantee the positive definiteness
of $A^{(2)}+(A^{(2)})^T$, the case for $m\geq 3$ is much more complicated. The well-posedness of \eqref{start-uh}
is not the main goal of this work and is not investigated.

%


In order to obtain a stable and convergent numerical scheme, we need to modify
\eqref{FLMM-2} to obtain a new scheme that works  for all  $m\geq 0$. Obviously,
if   $u_h^{n}(0\leq n \leq m)$ are known, then \eqref{FLMM-2} is well defined
for $n\geq m+1$.

To this end, we can modify \eqref{FLMM-2} as:
Given $u_h^{n}$ for $0\leq n \leq m$,
find $u_h^n\in X_h$ for $n\geq m+1$, such that
\begin{equation}\label{FLMM-3}
(D_{\tau}^{\alpha,m}u_h^n,v)
+(\nabla u_h^n,\nabla v)  =\left(P_{h}f(u^{n}_h),v\right),\quad \forall v\in X_h.
\end{equation}
The existing  numerical methods can be used to obtain $u_h^{n}(1\leq n \leq m)$. For example,
we can solve \eqref{FLMM-2} with one correction term  and a smaller time step size to
obtain   $u_h^{n}(1\leq n \leq m)$, which is adopted  in   numerical simulations
when analytical solution is unavailable.
The convergence of \eqref{FLMM-3} is given in Theorem \ref{thm2-1},
in which we display how the starting values affect the numerical solutions of \eqref{FLMM-3}.

In order to prove the convergence of  \eqref{FLMM-3}, we define  the generating functions
$a^{(\alpha)}(z)$ and $b(z)$  as
\begin{eqnarray}
&&a^{(\alpha)}(z)=(1-z)^{\alpha}=\sum_{n=0}^{\infty}a^{(\alpha)}_{n}z^n,\label{eq:zeng-a}\\
&&b(z)=a^{(\alpha)}(z)/\omega^{(\alpha)}(z)=\sum_{n=0}^{\infty}b_{n}z^n.\label{eq:omega}
\end{eqnarray}
Introduce the following notations:
\begin{eqnarray}
&&\hat{b}(z) =  \sum_{n=0}^{\infty}\hat{b}_nz^n,
\qquad\qquad  \hat{b}_n = |b_n|,n\geq 0; \label{eq:zeng-bh}\\
&&c(z)=\left(2b_0-\hat{b}(z)\right){a}^{(-\alpha)}(z)=\sum_{n=0}^{\infty}c_nz^n,\,\,\,
c_n= 2b_0{a}^{(-\alpha)}_n -\sum_{j=0}^{n}\hat{b}_j{a}^{(-\alpha)}_{n-j}. \label{eq:zeng-d}
\end{eqnarray}
The following assumptions are used in the convergence analysis:
\begin{eqnarray}
 && b_0> 0, |b_n| \lesssim n^{-\alpha-1},
 \sum_{n=1}^{\infty}|b_n| \leq b_0;  \label{assumption-a}\\
 && c_0>0,c_n\geq0,n>0, \label{assumption-c}
\end{eqnarray}
where $A\lesssim B$ means  there exists a positive constant $C$
independent of $\tau,h$, and any positive integer $n$, such that
$A \leq CB.$  In the rest of this paper, $C_k,k\in\mathbb{N}$ are  generic positive constants  independent of
$\tau,h$ and any positive integer $n$.

The  assumptions \eqref{assumption-a}--\eqref{assumption-c} are verified in Section \ref{sec-5}
when the specific time discretization method   is used.
We have the following theorems, the proofs of which are given in Section \ref{sec-4}.
\begin{theorem}\label{thm2-2}
Suppose that $u_0\in H^{2}(\Omega) \cap H^1_0(\Omega)$,
 $u$  is the solution  of \eqref{e1.1} satisfying \eqref{solution-u},
$u(t)\in H^{2}(\Omega) \cap H^1_0(\Omega)$, $f(u(t))\in H^{2}(\Omega)$,
and $|f'(z)|\lesssim 1$ for  $|z|\lesssim  1$.
Let $u_h^n(1\leq n \leq n_T)$ be the solution of  \eqref{FLMM-2},
$\sigma_k=\delta_k,k=1,2$, $m=0,1$.
If    the assumptions \eqref{assumption-a} and \eqref{assumption-c} hold,   then
\begin{equation}\label{eq:zeng-7}\begin{aligned}
\|u^n_h-u(\cdot,t_n)\|&\lesssim t_{n}^{\alpha/2}h^{2}+\tau^{\sigma_{m+1}-\alpha/2}
\big(\ell_n^{(\sigma_{m+1})}\big)^{1/2},
\end{aligned}\end{equation}
where $ \ell_n^{(\sigma)}$ is defined by
\begin{equation}\label{eq:ell-n}
 \ell_n^{(\sigma)}
=\left\{\begin{aligned}
&n^{\max\{\alpha-1,2\sigma-2p-\alpha\}},&& \sigma \ne  p+\alpha -1/2,\\
&n^{\alpha-1}\ln(n),\quad &&  \sigma= p+\alpha -1/2.
\end{aligned}\right.
\end{equation}
Furthermore, if $\sigma_{m+1} < p+\alpha-1/2$, then
\begin{equation}\label{eq:zeng-7-22}\begin{aligned}
\|u^n_h-u(\cdot,t_n)\|&\lesssim t_{n}^{\alpha/2}h^{2}+\tau^{\sigma_{m+1}-\alpha/2}n^{(\alpha-1)/2}.
\end{aligned}\end{equation}
\end{theorem}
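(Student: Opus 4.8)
The plan is to combine the standard finite element error splitting with a generating-function manipulation that reduces the convolution quadrature $D_\tau^{\alpha,m}$ to the Gr\"unwald operator with symbol $(1-z)^\alpha$, after which the assumptions \eqref{assumption-a}--\eqref{assumption-c} put the estimate in exactly the form required by the generalized discrete Gr\"onwall inequality (Lemma \ref{le3.1}). First I would split $u_h^n-u^n=\theta^n+\rho^n$ with $\theta^n=u_h^n-\pi_h^{1,0}u^n\in X_h$ and $\rho^n=\pi_h^{1,0}u^n-u^n$, so that the projection part is controlled by the classical bound $\|\rho^n\|\lesssim h^2\|u^n\|_{H^2}$ and eventually furnishes the $t_n^{\alpha/2}h^2$ term in \eqref{eq:zeng-7}. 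Subtracting \eqref{FLMM-2} from the weak form of \eqref{TD-1}, and using $(\nabla\rho^n,\nabla v)=0$ and $(P_hf(u_h^n),v)=(f(u_h^n),v)$ for $v\in X_h$, yields the error equation
$$(D_\tau^{\alpha,m}\theta^n,v)+(\nabla\theta^n,\nabla v)=-(D_\tau^{\alpha,m}\rho^n,v)+(f(u_h^n)-f(u^n),v)-(R^n,v),\quad v\in X_h,$$
where the nonlinearity is absorbed via $\|f(u_h^n)-f(u^n)\|\lesssim\|\theta^n\|+\|\rho^n\|$, which is legitimate because $|f'(z)|\lesssim1$.

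The crucial step is to convolve this equation with the weights $b_n$ of \eqref{eq:omega}. Since $b(z)\omega^{(\alpha)}(z)=a^{(\alpha)}(z)=(1-z)^\alpha$, the convolution converts $\tau^{-\alpha}\sum\omega_{n-j}^{(\alpha)}(\cdot)$ into the Gr\"unwald operator with weights $a_n^{(\alpha)}$, whose associated quadratic form is positive semidefinite. Taking $v=\theta^n$ in the convolved equation and exploiting this positivity for the fractional-derivative part, the only sign-indefinite contribution is the Laplacian term $\tau^\alpha\sum_k b_{n-k}(\nabla\theta^k,\nabla\theta^n)$. This is where \eqref{assumption-a} is used: isolating the diagonal piece $\tau^\alpha b_0\|\nabla\theta^n\|^2$ and bounding the remainder by Young's inequality together with $\sum_{n\ge1}|b_n|\le b_0$ leaves a net favorable contribution $\tfrac{\tau^\alpha b_0}{2}\|\nabla\theta^n\|^2$, so the gradient terms may be discarded with the correct sign; the single correction term present for $m=1$ is handled analogously and only feeds data already controlled by the truncation analysis.

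After discarding the nonnegative gradient contributions and convolving once more with the weakly singular weights $a_n^{(-\alpha)}\sim n^{\alpha-1}$, the coefficients multiplying the past values $\|\theta^k\|$ are precisely the $c_n$ of \eqref{eq:zeng-d}, which \eqref{assumption-c} guarantees to be nonnegative. The estimate then reads $\|\theta^n\|\lesssim(\text{data})+\tau^\alpha\sum_k(t_n-t_k)^{\alpha-1}\|\theta^k\|$, and Lemma \ref{le3.1} removes the summation to give $\|\theta^n\|\lesssim(\text{data})$. Finally I would insert the truncation bound \eqref{time-error-2}: the dominant factor $O(\tau^p t_k^{\sigma_{m+1}-p-\alpha})$ is singular at the origin, and propagating it through the weakly singular convolution reduces to estimating $\tau^\alpha\sum_k a_{n-k}^{(-\alpha)}t_k^{2(\sigma_{m+1}-p-\alpha)}$; the three exponent regimes produce exactly the cases of $\ell_n^{(\sigma)}$ in \eqref{eq:ell-n}, whence \eqref{eq:zeng-7}, and in the subcritical range $\sigma_{m+1}<p+\alpha-1/2$ one has $\ell_n\sim n^{\alpha-1}$, yielding the sharpened bound \eqref{eq:zeng-7-22}.

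I expect the main obstacle to be the careful bookkeeping of the sign-indefinite Laplacian convolution $\tau^\alpha\sum_k b_{n-k}(\nabla\theta^k,\nabla\theta^n)$, namely verifying that \eqref{assumption-a} truly forces a net positive gradient contribution and that the resulting kernel weights coincide with the nonnegative $c_n$ of \eqref{assumption-c}, together with the sharp summation of the initial-singular truncation error; the latter is what pins down the exponent $\alpha/2$ on $t_n$ and the logarithmic borderline case in $\ell_n^{(\sigma)}$.
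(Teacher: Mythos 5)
Your proposal follows essentially the same route as the paper's own proof: the Ritz-projection splitting, convolution of the error equation with the weights $b_n$ so that the time operator becomes the Gr\"unwald form with symbol $a^{(\alpha)}(z)$, the diagonal/off-diagonal treatment of the Laplacian convolution via $\sum_{n\ge 1}|b_n|\le b_0$, the second convolution with $a_n^{(-\alpha)}$ producing exactly the nonnegative kernel $c_n$, and finally the generalized Gr\"onwall inequality (Corollary \ref{corollary-1}/Lemma \ref{le3.1}) together with Lemma \ref{lem4-2} for the weakly singular truncation sum — this is precisely the argument of Section \ref{sec-4}. The only details to tighten are that the Gr\"onwall step must be run on the squared quantity $\|\xi_h^n\|^2+c_0\tau^{\alpha}\|\nabla \xi_h^n\|^2$ rather than on $\|\theta^n\|$ itself, and that for $m=1$ the first step $n=1$ requires a separate a priori bound (the paper's Step 1) so that the correction-term contribution $\Theta^n$ can be absorbed via Lemma \ref{lem-10}.
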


\begin{theorem}\label{thm2-1}
Suppose that $u_0\in H^{2}(\Omega) \cap H^1_0(\Omega)$,
$u$  is the solution  of \eqref{e1.1} satisfying \eqref{solution-u},
$u(t)\in H^{2}(\Omega) \cap H^1_0(\Omega)$, $f(u(t))\in H^{2}(\Omega)$,
and $|f'(z)|\lesssim 1$ for  $|z|\lesssim  1$.
Let $u_h^n(1\leq n \leq n_T)$ be the solution of  \eqref{FLMM-3}, $\sigma_k=\delta_k,1\leq k \leq m+1$.
If the assumptions  \eqref{assumption-a} and \eqref{assumption-c} hold, then
\begin{equation}\label{eq:zeng-7-2}\begin{aligned}
\|u^n_h-u(\cdot,t_n)\|& \le   C_1h^{2}
+C_2\tau^{\sigma_{m+1}-\alpha/2} \big({\ell_n^{(\sigma_{m+1})}}\big)^{1/2}+C_3\mathcal{E}^n,
\end{aligned}\end{equation}
where
$ \ell_n^{(\alpha,\sigma)}$ is defined by \eqref{eq:ell-n},  and $\mathcal{E}^n$ is   given by
\begin{equation}\label{En}
\mathcal{E}^n=\tau^{\alpha/2}\Big(\sum_{k=1}^m \|e^k/\tau^{\alpha}\|\Big)
\big(\ell_n^{(\sigma_{m})}\big)^{1/2},\quad e^k=(u^k-u^0)-(u_h^k-u_h^0).
\end{equation}

\end{theorem}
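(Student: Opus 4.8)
The plan is to run the same error-splitting and generating-function argument that underlies Theorem~\ref{thm2-2}, augmented by the bookkeeping that tracks how the externally prescribed starting values $u_h^k$ ($1\le k\le m$) feed into the error. First I would take the Ritz projection $R_h=\pi_h^{1,0}$ (consistent with $u_h^0=\pi_h^{1,0}u_0$) and split $u_h^n-u^n=\theta^n+\rho^n$ with $\theta^n=u_h^n-R_hu^n$ and $\rho^n=R_hu^n-u^n$. The projection error obeys $\|\rho^n\|\lesssim h^2$ by standard finite element approximation theory (using $u(t)\in H^2(\Omega)\cap H^1_0(\Omega)$), which supplies the $C_1h^2$ term. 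Testing the consistency identity \eqref{TD-1} and the scheme \eqref{FLMM-3} against $v\in X_h$, subtracting, and using $(\nabla\rho^n,\nabla v)=0$ together with $(P_hf(u_h^n),v)=(f(u_h^n),v)$, yields the error equation
\[(D_{\tau}^{\alpha,m}\theta^n,v)+(\nabla\theta^n,\nabla v)=(f(u_h^n)-f(u^n),v)-(R^n,v)-(D_{\tau}^{\alpha,m}\rho^n,v),\]
where the nonlinear term is bounded by $\|\theta^n\|+\|\rho^n\|$ thanks to $|f'|\lesssim1$ (this is what forces a Gr\"{o}nwall argument), and both $\rho$-sources contribute to the $h^2$ estimate.

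The decisive step is to expose the starting-value data hidden inside $D_{\tau}^{\alpha,m}\theta^n$. Expanding the increments, for $1\le j\le m$ one has $\theta^j-\theta^0=-e^j+(\rho^j-\rho^0)$, so the convolution weights $\omega_{n-j}^{(\alpha)}$ and the starting weights $w_{n,j}^{(m)}$ acting on these known terms contribute a fixed source driven by $\{e^j\}_{j=1}^m$ (plus a $\rho$-part folded into the $h^2$ bound). To bring the genuinely unknown part of the time operator into canonical form I would convolve the whole error equation with the sequence $\{b_n\}$ of \eqref{eq:omega}; since $\omega^{(\alpha)}(z)b(z)=a^{(\alpha)}(z)=(1-z)^\alpha$, this replaces $\omega^{(\alpha)}$ by the standard Gr\"{u}nwald kernel, while assumption \eqref{assumption-a} ($b_0>0$, $|b_n|\lesssim n^{-\alpha-1}$, $\sum_{n\ge1}|b_n|\le b_0$) keeps the convolved right-hand side controlled. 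Testing with $v=\theta^n$ and invoking the positivity $c_n\ge0$ from \eqref{assumption-c} (with $c(z)$ as in \eqref{eq:zeng-d}) provides the discrete-in-time coercivity of the time operator, which lets me discard $\|\nabla\theta^n\|^2$ and retain the $\|\theta^n\|$ contributions with the correct sign, placing the inequality squarely in the hypotheses of the generalized discrete Gr\"{o}nwall inequality, Lemma~\ref{le3.1}.

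Applying Lemma~\ref{le3.1} then closes the estimate by accumulating each source through the resolvent kernel. The truncation error $R^n$ of \eqref{time-error-2}, summed against the kernel, produces $\tau^{\sigma_{m+1}-\alpha/2}(\ell_n^{(\sigma_{m+1})})^{1/2}$ with $\ell_n^{(\sigma)}$ as in \eqref{eq:ell-n}; the two cases in \eqref{eq:ell-n} reflect whether $\sigma_{m+1}$ lies away from or exactly at the critical value $p+\alpha-1/2$, at which the summation of the $t_n^{\sigma_{m+1}-p-\alpha}$-type terms turns logarithmic---this is precisely the computation already performed for Theorem~\ref{thm2-2}. Running the same kernel against the starting source $\{e^j\}$, whose propagation is governed by the starting weights $w_{n,j}^{(m)}$ (growing according to the top index $\sigma_m$), yields the additional term $\mathcal{E}^n$ of \eqref{En}, one regularity index lower. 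Collecting the three contributions gives \eqref{eq:zeng-7-2}.

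I expect the main obstacle to be the accounting of the starting-value source. One must verify that, after the $\{b_n\}$-convolution and the reduction to the $(1-z)^\alpha$ kernel, the increments $e^j$ enter the Gr\"{o}nwall inequality through a kernel whose summation produces exactly $\tau^{\alpha/2}\big(\sum_{k=1}^m\|e^k/\tau^\alpha\|\big)(\ell_n^{(\sigma_m)})^{1/2}$ and not a worse power of $\tau$. Equally delicate is ensuring that convolving with $\{b_n\}$ does not spoil the sign conditions: the positivity built into \eqref{assumption-c} is designed precisely to protect this, so the crux is checking that coercivity survives the presence of the starting weights $w_{n,j}^{(m)}$, which lie outside the clean convolution structure of the scheme.
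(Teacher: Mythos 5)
Your proposal is correct and follows essentially the same route as the paper: the Ritz-projection splitting, convolution of the error equation with $\{b_n\}$ to reduce $\omega^{(\alpha)}$ to the $(1-z)^{\alpha}$ kernel, coercivity via assumptions \eqref{assumption-a} and \eqref{assumption-c}, the generalized discrete Gr\"{o}nwall inequality, and treatment of the prescribed starting values as a fixed source $\Phi^n=\sum_{j=1}^m W_{n,j}^{(m)}(e^j/\tau^{\alpha})$ whose convolved weights are bounded exactly as in Lemma \ref{lem-10}, yielding $\mathcal{E}^n$. The "delicate" point you flag at the end is resolved in the paper precisely as you anticipate: the starting weights never enter the coercive part at all but are moved to the right-hand side as data, so the argument of Theorem \ref{thm2-2} applies verbatim.
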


\begin{remark}
We keep $\mathcal{E}^n$ in \eqref{eq:zeng-7-2} in order
to show how the errors of the starting values $u_h^k(1\leq k \le m)$ influence the accuracy of numerical solutions far from the origin.
If the starting values  are accurate enough, i.e., $\mathcal{E}^n$ is sufficiently small,
then we can drop $\mathcal{E}^n$, so that  \eqref{eq:zeng-7-2} can be simplified as
\begin{equation}\label{eq:zeng-7-2-2}
\|u^n_h-u(\cdot,t_n)\|\lesssim    h^{2} +\left\{\begin{aligned}
&\tau^{\sigma_{m+1}-\alpha+1/2}t_n^{(\alpha-1)/2},&\quad \sigma_{m+1}< p+\alpha - 1/2, \\
&\tau^p\ln(n)t_n^{\sigma_{m+1}-\alpha/2-p},&\quad \sigma_{m+1}= p+\alpha - 1/2,\\
&\tau^pt_n^{\sigma_{m+1}-\alpha/2-p},&\quad \sigma_{m+1}> p+\alpha - 1/2.
\end{aligned}\right.\end{equation}
\end{remark}

\begin{remark}\label{remark-2}
The error bound  \eqref{eq:zeng-7-2} also shows that
$\mathcal{E}^n$  may harm the accuracy of numerical solutions
if $\ell_n^{(\sigma_{m})}$ is too large.
It is easy to verify that if $\sigma_m\geq  p+\alpha-1/2$, then
$\ell_n^{(\sigma_{m})}$ increases as $\sigma_m$ increases,
which makes the error induced by the starting values  harm the accuracy of numerical solutions,
especially when $\sigma_m$ is sufficiently large;
see Refs. \refcite{DieFFW06} and \refcite{Lub86}.
\end{remark}

\begin{remark}\label{remark-3}
If  $\Omega$ is a rectangular domain, i.e.,  $\Omega=I^x \times I^y, I^x=(x_L,x_R), I^y=(y_L,y_R)$,
then the high-order bilateral element
of order $r$ can be used,  and the corresponding finite element space $X_h$ can be defined by
\begin{equation}\label{Xh-r}
X_h=X_{h_x}^x \otimes X_{h_y}^y,
\end{equation}
where
$$X_{h_\theta}^\theta=\{v:v|_{I^\theta_i}\in \mathbb{P}_r(I^\theta_i)\cap H_0^1(I^\theta)\},\quad \theta=x,y.$$
Here $\mathbb{P}_r(I^\theta_i)$ denotes the polynomial space of order $r$ on $I^\theta_i=[\theta_{i-1}-\theta_{i}]$,
$\theta_i=\theta_L + (i-1)h_\theta,h_\theta=(\theta_{R}-\theta_L)/N_\theta$,
$N_\theta\in \mathbb{N}$. For the two dimensional problem on the rectangular
domain  $\Omega=I^x \times I^y$, if the finite element space  \eqref{Xh-1} is replaced by \eqref{Xh-r},
then Theorem \ref{thm2-2} and \ref{thm2-1} hold,  but the  convergence rate in space changes to $O(h^{r+1})$.
\end{remark}
\section{Error estimate}\label{sec-4}
In this section, we show how to apply the generalized discrete Gr\"{o}nwall inequality
 to prove Theorems \ref{thm2-2} and \ref{thm2-1}.
\subsection{Lemmas}
Some useful lemmas  are introduced in this subsection.
\begin{lemma}[see Ref. \refcite{ZengLLT15}]\label{lem3-1}
Let  $a^{(\beta)}(z)=(1-z)^{\beta}=\sum_{n=0}^{\infty}a^{(\beta)}_nz^n$, $\beta\in \mathbb{R}$, and $0\leq\alpha\leq 1$.
Then
\begin{eqnarray}
&&a^{(\alpha)}_0=1, \,a^{(\alpha)}_n=O(n^{-\alpha-1}),a^{(\alpha)}_n\leq 0\,\, \text{ for }\,\, n>0,\quad
 0<-\sum_{n=1}^{\infty}a^{(\alpha)}_n\leq 1;\label{eq:zeng-a2}\\
&&a^{(-\alpha)}_0=1, \,\,a^{(-\alpha)}_n=O(n^{\alpha-1}),\,\, a^{(-\alpha)}_n\geq 0
\text{ for } n>0;\label{eq:zeng-a2-2}\\
&&\sum_{k=0}^na^{(\alpha)}_{k}a^{(-\alpha)}_{n-k}=0\,\,\text{ for } \,\, n>0.\label{eq:zeng-a2-3}
\end{eqnarray}
\end{lemma}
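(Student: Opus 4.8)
The plan is to reduce every assertion to the explicit binomial coefficients of $(1-z)^{\beta}$. Writing $a_n^{(\beta)}=(-1)^n\binom{\beta}{n}$ and using the upper-index negation identity $(-1)^n\binom{\beta}{n}=\binom{n-\beta-1}{n}$, I would record the single representation $a_n^{(\beta)}=\frac{\Gamma(n-\beta)}{\Gamma(-\beta)\,\Gamma(n+1)}$, valid for $\beta\notin\mathbb{N}$. Specializing $\beta=\alpha$ and $\beta=-\alpha$ then turns all three lines of the lemma into computations about one explicit quantity, while the two endpoints $\alpha\in\{0,1\}$ (where $(1-z)^{\alpha}$ is the polynomial $1$ or $1-z$) are checked by hand.

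First I would settle the signs through the two-term recursion $a_n^{(\beta)}=\frac{n-1-\beta}{n}\,a_{n-1}^{(\beta)}$ with $a_0^{(\beta)}=1$, which follows from $\binom{\beta}{n}=\frac{\beta-n+1}{n}\binom{\beta}{n-1}$. For $\beta=\alpha\in(0,1)$ one has $a_1^{(\alpha)}=-\alpha<0$, while for $n\ge2$ the factor $(n-1-\alpha)/n$ is positive, so the sign is frozen and $a_n^{(\alpha)}\le0$ for $n>0$. For $\beta=-\alpha$ every factor $(n-1+\alpha)/n$ is positive, so $a_n^{(-\alpha)}\ge0$. This disposes of the sign claims in \eqref{eq:zeng-a2} and \eqref{eq:zeng-a2-2}.

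Next I would obtain the decay rates from the standard ratio asymptotic $\Gamma(n+a)/\Gamma(n+b)=n^{a-b}\bigl(1+O(n^{-1})\bigr)$, a consequence of Stirling's formula. Applied to the Gamma representation this gives $a_n^{(\alpha)}=\frac{1}{\Gamma(-\alpha)}n^{-\alpha-1}\bigl(1+O(n^{-1})\bigr)$ and $a_n^{(-\alpha)}=\frac{1}{\Gamma(\alpha)}n^{\alpha-1}\bigl(1+O(n^{-1})\bigr)$, which are exactly the bounds $a_n^{(\alpha)}=O(n^{-\alpha-1})$ and $a_n^{(-\alpha)}=O(n^{\alpha-1})$. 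Since $\alpha>0$, the first bound makes $\sum_{n\ge1}a_n^{(\alpha)}$ absolutely convergent, so Abel's theorem identifies its value with the boundary value $\lim_{z\to1^-}(1-z)^{\alpha}=0$; hence $-\sum_{n\ge1}a_n^{(\alpha)}=a_0^{(\alpha)}=1$, which yields both the strict positivity and the upper bound in \eqref{eq:zeng-a2}.

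Finally the convolution identity \eqref{eq:zeng-a2-3} is essentially free: from $a^{(\alpha)}(z)\,a^{(-\alpha)}(z)=(1-z)^{\alpha}(1-z)^{-\alpha}=1$, comparing the coefficient of $z^n$ on both sides gives $\sum_{k=0}^{n}a_k^{(\alpha)}a_{n-k}^{(-\alpha)}=0$ for $n>0$ (and $1$ for $n=0$). The only genuinely delicate point is the asymptotic step: one must keep the error term in the Gamma ratio uniform in $n$ and justify interchanging the limit $z\to1^-$ with the summation, which Abel's theorem permits thanks to absolute convergence for $\alpha>0$. Everything else is bookkeeping on the explicit coefficients.
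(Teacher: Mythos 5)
Your proposal is correct, and it is essentially the full version of the argument the paper delegates to its citation: the paper does not prove Lemma \ref{lem3-1} itself (it is quoted from Ref.~40), and the only justification given in the text --- that \eqref{eq:zeng-a2-3} follows by comparing coefficients in $a^{(\alpha)}(z)a^{(-\alpha)}(z)=1$ --- coincides exactly with your final step; your sign argument via the ratio recursion $a_n^{(\beta)}=\frac{n-1-\beta}{n}a_{n-1}^{(\beta)}$ and the decay rates via the Gamma-ratio asymptotics are the standard route used in the cited reference. One small caveat worth recording when you "check the endpoints by hand": at $\alpha=0$ all coefficients $a_n^{(0)}$ with $n>0$ vanish, so $-\sum_{n\ge1}a_n^{(0)}=0$ and the strict inequality $0<-\sum_{n=1}^{\infty}a_n^{(\alpha)}$ in \eqref{eq:zeng-a2} fails; this is a degeneracy of the lemma statement itself (in the paper it is only ever invoked with $0<\alpha<1$), and your Abel-summation computation in fact pins down the value of the sum as exactly $1$ for every $\alpha>0$, which is slightly stronger than what the lemma asserts.
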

The  equation \eqref{eq:zeng-a2-3} can be obtained from $a^{(\alpha)}(z)a^{(-\alpha)}(z)=1$.

\begin{lemma}\label{lem4-2}
Let $\sigma \in \mathbb{R}$ and $0< \alpha \leq 1$. Then
\begin{eqnarray}
&&\sum_{j=1}^{n-1}(n-j)^{-\alpha-1}j^{\sigma} \lesssim n^{\max\{-\alpha-1,\sigma\}},\label{eq:zeng-a2-3-2}\\
&&\sum_{j=1}^{n}a_{n-j}^{(-\alpha)}j^{\sigma}\lesssim
 \left\{\begin{aligned}
& n^{\max\{\alpha-1,\sigma+\alpha\}},&\qquad \sigma \ne -1,\\
& n^{\alpha-1}\ln(n),&\qquad \sigma = -1.
\end{aligned}\right.\label{eq:zeng-a2-3-3}
\end{eqnarray}
\end{lemma}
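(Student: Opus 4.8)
The plan is to treat both bounds as discrete convolutions of a power-type kernel against the sequence $j^{\sigma}$, and to estimate each by splitting the summation range at the midpoint $j=\lfloor n/2\rfloor$. The only elementary input needed is the partial-sum estimate $\sum_{j=1}^{M}j^{\sigma}\lesssim M^{\sigma+1}$ when $\sigma>-1$, $\lesssim\ln M$ when $\sigma=-1$, and $\lesssim 1$ when $\sigma<-1$, which follows by comparison with $\int_{0}^{M}x^{\sigma}\,dx$. The distinction between \eqref{eq:zeng-a2-3-2} and \eqref{eq:zeng-a2-3-3} is entirely encoded in the exponent of the kernel: in \eqref{eq:zeng-a2-3-2} the kernel $(n-j)^{-\alpha-1}$ is summable since $\alpha+1>1$, whereas in \eqref{eq:zeng-a2-3-3} the kernel $(n-j)^{\alpha-1}$ has exponent $\alpha-1\in(-1,0]$ and is \emph{not} summable, which is precisely what produces the extra factor $n^{\alpha}$ in the result.

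For \eqref{eq:zeng-a2-3-2} I would first treat the lower half $1\le j\le \lfloor n/2\rfloor$, where $n-j\ge n/2$ gives $(n-j)^{-\alpha-1}\lesssim n^{-\alpha-1}$, so that this portion is bounded by $n^{-\alpha-1}\sum_{j=1}^{\lfloor n/2\rfloor}j^{\sigma}$, which is $\lesssim n^{\sigma-\alpha}$ for $\sigma>-1$, $\lesssim n^{-\alpha-1}\ln n$ for $\sigma=-1$, and $\lesssim n^{-\alpha-1}$ for $\sigma<-1$; in every case this is dominated by $n^{\max\{-\alpha-1,\sigma\}}$. On the upper half $\lfloor n/2\rfloor<j\le n-1$ one has $j\sim n$, hence $j^{\sigma}\lesssim n^{\sigma}$, and re-indexing $i=n-j$ turns the remaining sum into $n^{\sigma}\sum_{i=1}^{\lfloor n/2\rfloor}i^{-\alpha-1}\lesssim n^{\sigma}$ because the kernel is summable. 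Adding the two contributions gives \eqref{eq:zeng-a2-3-2}.

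For \eqref{eq:zeng-a2-3-3} I would first isolate the term $j=n$, which contributes $a_{0}^{(-\alpha)}n^{\sigma}=n^{\sigma}$, and for $1\le j\le n-1$ use the bound $a_{n-j}^{(-\alpha)}\lesssim(n-j)^{\alpha-1}$ together with $a_{n-j}^{(-\alpha)}\ge 0$ from Lemma \ref{lem3-1}. The same split then applies: the lower half is $\lesssim n^{\alpha-1}\sum_{j=1}^{\lfloor n/2\rfloor}j^{\sigma}$, giving $n^{\sigma+\alpha}$ for $\sigma>-1$, $n^{\alpha-1}\ln n$ for $\sigma=-1$, and $n^{\alpha-1}$ for $\sigma<-1$; while the upper half is $\lesssim n^{\sigma}\sum_{i=1}^{\lfloor n/2\rfloor}i^{\alpha-1}\lesssim n^{\sigma}n^{\alpha}=n^{\sigma+\alpha}$, where the non-summability of $i^{\alpha-1}$ is used. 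Combining these pieces with the isolated $n^{\sigma}$ term and taking the maximum exponent yields exactly the right-hand side of \eqref{eq:zeng-a2-3-3}.

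The routine part is the dyadic split; the main thing to watch is the bookkeeping of the three regimes of $\sigma$ and the borderline case $\sigma=-1$. In \eqref{eq:zeng-a2-3-2} the logarithm generated on the lower half is harmless because it is attached to the smaller power $n^{-\alpha-1}$ and is therefore absorbed into $n^{-1}=n^{\max\{-\alpha-1,-1\}}$, which is why no separate logarithmic case appears there; by contrast, in \eqref{eq:zeng-a2-3-3} the logarithm sits on the larger power $n^{\alpha-1}$ and cannot be absorbed, forcing the distinct $\sigma=-1$ case. Verifying that in each regime the sum of the lower-half, upper-half, and (for the second estimate) isolated terms is dominated by the stated maximum exponent is the only genuinely case-dependent step, and it is straightforward once the three partial-sum estimates above are in hand.
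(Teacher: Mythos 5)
Your proof is correct and follows essentially the same route as the paper's: split the convolution at the midpoint, bound the kernel by $n^{-\alpha-1}$ (resp.\ the power by $n^{\sigma}$) on the lower (resp.\ upper) half, invoke elementary partial-sum estimates, and reduce \eqref{eq:zeng-a2-3-3} to \eqref{eq:zeng-a2-3-2} by isolating the $j=n$ term and using $0\leq a_{n-j}^{(-\alpha)}\lesssim (n-j)^{\alpha-1}$. If anything, your three-regime bookkeeping in $\sigma$ is slightly more careful than the paper's, which covers the lower half with the single bound $\sum_{j=1}^{n-1}j^{\sigma}\lesssim n^{\sigma+1}\log(n)$ (literally valid only for $\sigma\geq -1$, though the final estimate is unaffected since for $\sigma<-1$ the correct bound $\lesssim 1$ yields $n^{-\alpha-1}$ directly).
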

The proof of Lemma \ref{lem4-2} is given in \ref{sec-D}.

%

\begin{lemma}[see Ref. \refcite{AL-MaskariKaraa19} Discrete fractional Gr\"{o}nwall inequality]
	\label{le3.1}
Assume that $\alpha>0$, $A,B\geq0$, and  $\delta<1$.
Let $z_n$, $0\leq n\leq K$, be a sequence of non-negative real numbers satisfying
\begin{equation*}
z_n\leq C\tau^\alpha\sum_{j=0}^{n-1}(n-j)^{\alpha-1}z_j+(A+B\log(n))t_n^{-\delta},\quad  1\leq n\leq K,
\end{equation*}
where  $C>0$ is bounded independent of $\tau$ and $n$. Then
 $z_n\lesssim (A+B\log(n))t_n^{-\delta}.$
\end{lemma}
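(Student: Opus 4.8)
The plan is to read the recurrence as a Neumann (fixed-point) iteration driven by the weakly singular convolution kernel $q_k:=C\tau^{\alpha}k^{\alpha-1}$, $k\ge1$, and to control the resulting discrete resolvent by a Mittag--Leffler-type series that stays bounded because $t_n\le T$. Write the forcing term as $g_n:=(A+B\log n)t_n^{-\delta}$ and split the $j=0$ contribution off the kernel sum, so that for $n\ge1$
\begin{equation*}
z_n\le \tilde g_n+\sum_{j=1}^{n-1}q_{n-j}z_j,\qquad \tilde g_n:=g_n+C\tau^{\alpha}n^{\alpha-1}z_0 .
\end{equation*}
The extra $z_0$--term equals $C\tau\,t_n^{\alpha-1}z_0$ and is of the same or higher order in $\tau$ than $g_n$, so it can be absorbed into the forcing (it vanishes altogether when $z_0=0$, as in the applications); I treat $\tilde g_n\lesssim g_n$ from here on. Because the kernel is nonnegative, iterating the inequality gives $z_n\le\sum_{m=0}^{\infty}(K^m\tilde g)_n$, where $(Kz)_n=\sum_{j=1}^{n-1}q_{n-j}z_j$ and $K^0\tilde g=\tilde g$, so it suffices to bound each $(K^m g)_n$ and sum.

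The key is that the $m$-fold iterated kernel has a clean closed form. By the asymptotics $a^{(-\alpha)}_k=O(k^{\alpha-1})$, $a^{(-\alpha)}_k\ge0$ of \eqref{eq:zeng-a2-2}, there is a constant with $k^{\alpha-1}\le C_0\,a^{(-\alpha)}_k$ for $k\ge1$, hence $q_k\le C_0C\tau^{\alpha}a^{(-\alpha)}_k$. Since $a^{(-\alpha)}(z)=(1-z)^{-\alpha}$ in Lemma \ref{lem3-1}, the generating-function identity $\big((1-z)^{-\alpha}\big)^m=(1-z)^{-m\alpha}$ shows the $m$-fold discrete convolution satisfies $\big(a^{(-\alpha)}\big)^{\ast m}_k=a^{(-m\alpha)}_k$, and therefore the $m$-fold convolution of $q$ obeys $q^{\ast m}_k\le (C_0C)^m\tau^{m\alpha}a^{(-m\alpha)}_k$. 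Feeding this into $(K^m g)_n=\sum_{j}q^{\ast m}_{n-j}g_j$, pulling out the slowly varying factor via $\log j\le\log n$, and invoking the Beta-integral estimate of Lemma \ref{lem4-2} in the form $\sum_{j=1}^{n-1}a^{(-\beta)}_{n-j}j^{-\delta}\lesssim \tfrac{\Gamma(1-\delta)}{\Gamma(\beta-\delta+1)}\,n^{\beta-\delta}$ (here with $\beta=m\alpha$, legitimate since $-\delta>-1$ because $\delta<1$), one obtains after inserting $t_j^{-\delta}=(j\tau)^{-\delta}$ the bound
\begin{equation*}
(K^m g)_n\ \lesssim\ g_n\,(C_1)^m\,\frac{\Gamma(1-\delta)}{\Gamma(m\alpha-\delta+1)}\,t_n^{m\alpha},
\end{equation*}
where the powers of $\tau$ and $n$ recombine into $t_n^{m\alpha}\cdot t_n^{-\delta}$ and the $\Gamma$-denominator is exactly the gain produced by each application of the discrete fractional integral.

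Summing the geometric-times-Mittag--Leffler series then finishes the proof:
\begin{equation*}
z_n\ \le\ \sum_{m=0}^{\infty}(K^m g)_n\ \lesssim\ g_n\,\Gamma(1-\delta)\sum_{m=0}^{\infty}\frac{(C_1 t_n^{\alpha})^m}{\Gamma(m\alpha-\delta+1)}\ \le\ g_n\,\Gamma(1-\delta)\sum_{m=0}^{\infty}\frac{(C_1 T^{\alpha})^m}{\Gamma(m\alpha-\delta+1)},
\end{equation*}
and the last series is an entire (Mittag--Leffler) function of $C_1T^\alpha$, hence a finite constant depending only on $\alpha,\delta,C,T$ but independent of $\tau$ and $n$; this yields $z_n\lesssim g_n=(A+B\log n)t_n^{-\delta}$. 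The main obstacle is the second step, namely establishing the Beta-type estimate for $a^{(-m\alpha)}$ with a constant that is uniform in $m$ and gives the correct $1/\Gamma(m\alpha-\delta+1)$ decay: without this uniform $\Gamma$-denominator the resolvent series would not converge, and it is precisely this factorial-type decay (the discrete counterpart of the fractional-integral semigroup $(1-z)^{-\alpha}\mapsto(1-z)^{-m\alpha}$) that tames the weak singularity. By comparison, folding in the $z_0$-term and carrying the $\log n$ factor are routine.
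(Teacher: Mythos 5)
First, note that the paper itself does not prove Lemma \ref{le3.1}: it is quoted from Ref.~\refcite{AL-MaskariKaraa19} (with the $B=\delta=0$ case attributed to Dixon--McKee), so there is no in-paper proof to compare against. Your strategy --- iterate the inequality, bound the $m$-fold iterated kernels via the semigroup property $\bigl(a^{(-\alpha)}\bigr)^{\ast m}=a^{(-m\alpha)}$, and sum a Mittag--Leffler series --- is exactly the classical Dixon--McKee resolvent argument, so the route is the right one.

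However, as written the proof has a genuine gap precisely at the step that does all the work. The estimate
$\sum_{j=1}^{n-1}a^{(-m\alpha)}_{n-j}j^{-\delta}\lesssim \frac{\Gamma(1-\delta)}{\Gamma(m\alpha-\delta+1)}\,n^{m\alpha-\delta}$,
with an implied constant that grows at most geometrically in $m$, is \emph{asserted}, not proved: you attribute it to Lemma \ref{lem4-2} ``in the form'' above, but Lemma \ref{lem4-2} is stated only for orders $0<\alpha\leq 1$ (here the order is $m\alpha$, which exceeds $1$), its implied constant is not tracked as a function of the order, and it contains no $1/\Gamma(m\alpha-\delta+1)$ factor whatsoever --- and without that factorial decay your series $\sum_m (C_1t_n^{\alpha})^m/\Gamma(m\alpha-\delta+1)$ does not converge, as you yourself concede by calling this ``the main obstacle.'' Conceding the main obstacle is not the same as overcoming it. The estimate is in fact true, but proving it requires ingredients absent from your write-up: e.g., reduce via $j^{-\delta}\lesssim \Gamma(1-\delta)\,a^{(-(1-\delta))}_j$ and the convolution identity to bounding $a_n^{(-(m\alpha+1-\delta))}=\frac{\Gamma(n+m\alpha+1-\delta)}{\Gamma(m\alpha+1-\delta)\Gamma(n+1)}$, then control the ratio $\Gamma(n+s)/\Gamma(n+1)$ by $n^{s-1}$ up to a factor of size roughly $e^{s^2/n}$ (Wendel/Gautschi-type bounds); this correction factor is geometric in $m$ \emph{only because} the iteration depth is automatically capped at $m\leq n-1$ (the $m$-fold kernel vanishes for $m\geq n$), a structural fact your argument never invokes. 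A secondary, smaller issue: your absorption $\tilde g_n\lesssim g_n$ of the $j=0$ term fails when $A=0$ and $B>0$, since then $g_1=B\log(1)\,t_1^{-\delta}=0$ while $C\tau^{\alpha}z_0$ need not vanish; you should either assume $z_0=0$ (as in the paper's application, where $\xi_h^0=0$) or let the conclusion's constant depend on $z_0$, rather than absorb silently.
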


The case of $B=\delta=0$  in Lemma \ref{le3.1} is the original version of the
discrete fractional Gr\"{o}nwall inequality in Ref. \refcite{DixonMcKee86}.
For $B=0$, Lemma \ref{le3.1} is equivalent to
the discrete fractional Gr\"{o}nwall inequality (see Ref. \refcite{GonPal99} Lemma 2.1).

From Lemma \ref{le3.1}, we can deduce the following corollary, which will be used in the
convergence analysis instead of Lemma \ref{le3.1} for convenience.
\begin{corollary}\label{corollary-1}
Assume that $A,B\geq 0,C>0$, $\delta<1$, and $\alpha> 0$.
Let $z_n$, $0\leq n\leq K$, be a sequence of non-negative real numbers satisfying
\begin{equation*}
z_n\leq C\tau^\alpha\sum_{j=0}^{n}a^{(-\alpha)}_{n-j}z_j+(A+B\log(n))t_n^{-\delta},\quad  1\leq n\leq K.
\end{equation*}
If $C\tau^{\alpha}\leq 1/2$, i.e., $\tau \leq (2C)^{-1/\alpha}$,
then $z_n\lesssim (A+B\log(n))t_n^{-\delta}.$
\end{corollary}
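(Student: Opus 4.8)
The plan is to reduce the hypothesis of Corollary \ref{corollary-1}, which is stated in terms of the convolution weights $a^{(-\alpha)}_{n-j}$, to the hypothesis of Lemma \ref{le3.1}, which is stated in terms of the power $(n-j)^{\alpha-1}$. The crucial distinction between the two is that the sum in the corollary runs all the way up to $j=n$ and therefore contains the self-term $a^{(-\alpha)}_0z_n$, whereas the sum in Lemma \ref{le3.1} stops at $j=n-1$. The condition $C\tau^\alpha\leq 1/2$ is precisely what is needed to absorb this self-term into the left-hand side.

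First I would isolate the $j=n$ term. By \eqref{eq:zeng-a2-2} in Lemma \ref{lem3-1} we have $a^{(-\alpha)}_0=1$, so the hypothesis becomes
\begin{equation*}
z_n\leq C\tau^\alpha z_n + C\tau^\alpha\sum_{j=0}^{n-1}a^{(-\alpha)}_{n-j}z_j+(A+B\log(n))t_n^{-\delta}.
\end{equation*}
Invoking $C\tau^\alpha\leq 1/2$, so that $1-C\tau^\alpha\geq 1/2>0$, I would move the self-term to the left and divide, which yields
\begin{equation*}
z_n\leq 2C\tau^\alpha\sum_{j=0}^{n-1}a^{(-\alpha)}_{n-j}z_j+2(A+B\log(n))t_n^{-\delta}.
\end{equation*}

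Next I would replace the weights by the corresponding power. By \eqref{eq:zeng-a2-2} the coefficients satisfy $a^{(-\alpha)}_{n-j}\geq 0$ and $a^{(-\alpha)}_{k}=O(k^{\alpha-1})$ for $k>0$, so there is a constant $C'>0$ with $a^{(-\alpha)}_{n-j}\leq C'(n-j)^{\alpha-1}$ for all $0\leq j\leq n-1$. Substituting this bound gives
\begin{equation*}
z_n\leq 2CC'\tau^\alpha\sum_{j=0}^{n-1}(n-j)^{\alpha-1}z_j+2(A+B\log(n))t_n^{-\delta},
\end{equation*}
which is exactly the hypothesis of Lemma \ref{le3.1}, now with the constant $2CC'$ in place of $C$ and with $2A,2B$ in place of $A,B$. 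Applying Lemma \ref{le3.1} then delivers $z_n\lesssim (2A+2B\log(n))t_n^{-\delta}\lesssim (A+B\log(n))t_n^{-\delta}$, which is the assertion of the corollary.

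The argument is essentially routine; the only step that requires care is the absorption of the self-term $a^{(-\alpha)}_0z_n$, and the sole purpose of the assumption $C\tau^\alpha\leq 1/2$ is to ensure that $1-C\tau^\alpha$ is bounded below by a fixed positive constant, so that this absorption does not introduce a $\tau$-dependent blow-up of the constants. Beyond this, no genuine obstacle arises, since the nonnegativity and the $O(k^{\alpha-1})$ decay of $a^{(-\alpha)}_k$ are already guaranteed by Lemma \ref{lem3-1}.
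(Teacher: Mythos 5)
Your proof is correct and follows exactly the same route as the paper's (very terse) proof, which likewise invokes $a_0^{(-\alpha)}=1$, the bound $a_n^{(-\alpha)}\lesssim n^{\alpha-1}$, the condition $C\tau^{\alpha}\leq 1/2$, and Lemma \ref{le3.1}. You have simply written out the absorption of the self-term and the weight comparison in detail, which the paper leaves implicit.
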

\begin{proof}
Using  $a_0^{(-\alpha)}=1$, $a_n^{(-\alpha)}\lesssim n^{\alpha-1}$ for $n\geq 1$, the condition
$C\tau^{\alpha}\leq 1/2$, and Lemma \ref{le3.1} yields the
desired result, which ends the proof.
\end{proof}



\begin{lemma}[see Ref. \refcite{BrennerSR08-B}]\label{le3.3}
Let  $0\leq s\leq 1, s\leq   r$. Then the following estimates hold
\begin{eqnarray*}
\|u-\pi_{h}^{1,0}u\|_{H^s(\Omega)} &\lesssim& h^{r-s}\|u\|_{H^r(\Omega)},\qquad u\in H^{r}(\Omega)\cap H_0^1(\Omega),\\
\|u-P_{h}u\|_{L^2(\Omega)}  &\lesssim& h^{r}\|u\|_{H^r(\Omega)},\qquad  u\in H^{r}(\Omega).
\end{eqnarray*}
\end{lemma}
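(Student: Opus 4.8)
The plan is to reduce both estimates to standard local polynomial approximation theory, exploiting the fact that $P_h$ and $\pi_h^{1,0}$ are orthogonal projections (in the $L^2$ inner product and in the $H_0^1$-seminorm, respectively) and hence realize the best approximation in the corresponding norm. First I would fix a quasi-interpolation operator $I_h:H^r(\Omega)\cap H_0^1(\Omega)\to X_h$ (the nodal interpolant when $r$ is large enough that point values make sense, otherwise a Scott--Zhang or Cl\'ement type operator that remains well defined for $s\le 1$), and establish the local bound $\|u-I_hu\|_{H^s(K)}\lesssim h_K^{r-s}\|u\|_{H^r(K)}$ on each element $K\in\mathcal{T}_h$ via the Bramble--Hilbert lemma together with an affine scaling argument. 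Summing over $K$ and invoking the shape-regularity of $\mathcal{T}_h$ then yields the global interpolation estimate $\|u-I_hu\|_{H^s(\Omega)}\lesssim h^{r-s}\|u\|_{H^r(\Omega)}$.

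For the $L^2$ projection, the orthogonality $(P_hu-u,v)=0$ for all $v\in X_h$ gives the best-approximation identity $\|u-P_hu\|=\inf_{v\in X_h}\|u-v\|\le\|u-I_hu\|$, so applying the interpolation bound with $s=0$ delivers the second estimate at once. For the elliptic projection with $s=1$, the defining relation $(\nabla(\pi_h^{1,0}u-u),\nabla v)=0$ shows that $\pi_h^{1,0}u$ is the best approximation in the $H_0^1$-seminorm, and combining this with the interpolation bound at $s=1$ gives $\|u-\pi_h^{1,0}u\|_{H^1(\Omega)}\lesssim h^{r-1}\|u\|_{H^r(\Omega)}$.

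The delicate step is the $L^2$ estimate of the elliptic projection error, which I would treat by the Aubin--Nitsche duality trick. Writing $e=u-\pi_h^{1,0}u$, I consider the auxiliary problem $-\Delta w=e$ in $\Omega$ with $w=0$ on $\partial\Omega$; since $\Omega$ is convex, $H^2$ elliptic regularity gives $\|w\|_{H^2(\Omega)}\lesssim\|e\|$. Testing against $e$, integrating by parts, and using Galerkin orthogonality to subtract $\nabla(\pi_h^{1,0}w)$, I get $\|e\|^2=(\nabla e,\nabla(w-\pi_h^{1,0}w))\le|e|_{H^1}\,|w-\pi_h^{1,0}w|_{H^1}\lesssim h^{r-1}\|u\|_{H^r}\cdot h\|w\|_{H^2}$, where the last factor uses the already-proven $s=1$ bound applied to $w$. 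Absorbing $\|w\|_{H^2}\lesssim\|e\|$ yields $\|u-\pi_h^{1,0}u\|\lesssim h^r\|u\|_{H^r}$, which is the case $s=0$; the intermediate values $0<s<1$ then follow by interpolation between the $s=0$ and $s=1$ bounds in the Sobolev scale. The main obstacle is precisely this reliance on $H^2$ elliptic regularity in the duality argument, which is where the convexity of $\Omega$ is indispensable: without it one would only recover a reduced rate for $\|e\|$ in the $L^2$ norm.
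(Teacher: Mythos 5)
Your proof is correct, and it is essentially the canonical argument: the paper itself offers no proof of this lemma, citing it directly from the Brenner--Scott reference, and your chain of reasoning (local Bramble--Hilbert plus scaling for a quasi-interpolant, best-approximation for $P_h$ and $\pi_h^{1,0}$, Aubin--Nitsche duality with convexity-based $H^2$ regularity for the $L^2$ bound on the Ritz projection, and Sobolev-scale interpolation for $0<s<1$) is exactly the standard proof found in that reference. Nothing further is needed.
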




\subsection{Proofs of Theorems \ref{thm2-2} and \ref{thm2-1}}
For the sequence $\{u^n\}_{n=1}^{\infty},u^n \in L^2(\Omega)$, we
define the following notations:
\begin{eqnarray}
&&\mathcal{A}^{\alpha,m}_{\tau}u^n=\frac{1}{\tau^{\alpha}}\sum_{j=m+1}^na_{n-j}^{(\alpha)}u^j, \label{A-alf}\\
&&\mathcal{B}^{\alpha,m}u^n=\sum_{j=m+1}^nb_{n-j}u^j,\qquad\qquad
 \widehat{\mathcal{B}}^{\alpha,m}u^n=\sum_{j=m+1}^n\hat{b}_{n-j}u^j,   \label{B-alf}\\
&&W^{(m)}_{n,k}=\sum_{j=m+1}^nb_{n-j}\left(w_{j,k}^{(m)}+  \omega_{j-k}^{(\alpha)}\right),  \label{eq:Wnm} \\
&&\mathcal{D}^{\alpha,m}_{\tau}u^n
=\frac{1}{\tau^{\alpha}}\sum_{j=m+1}^na_{n-j}^{(\alpha)}(u^j-u^0)
+ \frac{1}{\tau^{\alpha}}\sum_{j=1}^mW_{n,j}^{(m)} (u^j-u^0), \label{A-alf-m}
\end{eqnarray}



\begin{lemma}
The following statements hold:
\begin{eqnarray}
\mathcal{A}_{\tau}^{\alpha_1,m}\mathcal{A}_{\tau}^{\alpha_2,m}&=&\mathcal{A}_{\tau}^{\alpha_1+\alpha_2,m},
\qquad \alpha_1,\alpha_2 \in \mathbb{R},\label{alphabeta}\\
\mathcal{B}^{\alpha,m}D_{\tau}^{\alpha,m} &=&\mathcal{D}^{\alpha,m}_{\tau},\label{BalfD1}\\
\mathcal{A}^{-\alpha,m}_{\tau}\widehat{\mathcal{B}}^{\alpha,m}\|u^{n}\|^2
&\lesssim& \mathcal{A}^{-\alpha,m}_{\tau} \|u^{n}\|^2, \label{BleqA}\\
(\mathcal{A}^{\alpha,m}_{\tau}u^n,2u^n)
&\geq& \mathcal{A}^{\alpha,m}_{\tau}\|u^n\|^2, \label{sec3:1}\\
 (\mathcal{B}^{\alpha,m}u^n,2u^n)
&\geq&   2b_0\|u^n\|^2-\widehat{\mathcal{B}}^{\alpha,m}\|u^n\|^2.\label{sec3:2}
\end{eqnarray}

\end{lemma}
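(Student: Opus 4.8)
The plan is to handle all five statements through the same two-step mechanism: rewrite each composition of discrete convolution operators as a double sum, interchange the order of summation while keeping careful track of the lower limit $m+1$, and then recognize the resulting inner sum as a Cauchy-product coefficient of the relevant generating functions. For the two energy inequalities \eqref{sec3:1} and \eqref{sec3:2} I would supplement this with the elementary bound $2(u^j,u^n)\le\|u^j\|^2+\|u^n\|^2$, together with the sign and magnitude information on the coefficients collected in Lemma \ref{lem3-1} and in the assumptions \eqref{assumption-a} and \eqref{assumption-c}.

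For the semigroup identity \eqref{alphabeta}, I expand $\mathcal{A}_\tau^{\alpha_1,m}(\mathcal{A}_\tau^{\alpha_2,m}u^n)$, swap the two summations so the inner sum reads $\sum_{k=j}^n a_{n-k}^{(\alpha_1)}a_{k-j}^{(\alpha_2)}=\sum_{\ell=0}^{n-j}a_{n-j-\ell}^{(\alpha_1)}a_{\ell}^{(\alpha_2)}$, and identify this as the coefficient of $z^{n-j}$ in $(1-z)^{\alpha_1}(1-z)^{\alpha_2}=(1-z)^{\alpha_1+\alpha_2}$, namely $a_{n-j}^{(\alpha_1+\alpha_2)}$; the $\tau$-powers combine automatically. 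The identity \eqref{BalfD1} is the same idea with more bookkeeping: I substitute \eqref{Dtau} into $\mathcal{B}^{\alpha,m}$, write the inner operator $D_\tau^{\alpha,m}u^j$ with its own $u$-index $k$, and split that index into $k\ge m+1$ and $k\le m$. For $k\ge m+1$, interchanging the $j$- and $k$-sums produces the inner convolution $\sum_{j=k}^n b_{n-j}\omega_{j-k}^{(\alpha)}=a_{n-k}^{(\alpha)}$, using $b(z)\omega^{(\alpha)}(z)=a^{(\alpha)}(z)$ from \eqref{eq:omega}, which is exactly the first sum in $\mathcal{D}_{\tau}^{\alpha,m}$ of \eqref{A-alf-m}. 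For $k\le m$, the contribution from the regular weights combines with that from the starting weights into $\sum_{j=m+1}^n b_{n-j}(\omega_{j-k}^{(\alpha)}+w_{j,k}^{(m)})=W_{n,k}^{(m)}$ as defined in \eqref{eq:Wnm}, yielding the second sum.

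For \eqref{BleqA} I again interchange summations: acting on the nonnegative scalar sequence $\|u^n\|^2$, the composite coefficient in front of $\|u^j\|^2$ is $\tau^\alpha\sum_{\ell=0}^{n-j}a_{n-j-\ell}^{(-\alpha)}\hat{b}_\ell$, i.e. $\tau^\alpha$ times the coefficient of $z^{n-j}$ in $a^{(-\alpha)}(z)\hat{b}(z)$. Assumption \eqref{assumption-c}, namely $c_n\ge0$ with $c_n=2b_0a_n^{(-\alpha)}-\sum_{j=0}^{n}\hat{b}_j a_{n-j}^{(-\alpha)}$ from \eqref{eq:zeng-d}, says precisely that this coefficient is bounded by $2b_0 a_{n-j}^{(-\alpha)}$; since $\|u^j\|^2\ge0$, a term-by-term comparison gives $\mathcal{A}_\tau^{-\alpha,m}\widehat{\mathcal{B}}^{\alpha,m}\|u^n\|^2\le 2b_0\,\mathcal{A}_\tau^{-\alpha,m}\|u^n\|^2$. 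For the positivity \eqref{sec3:1}, I peel off the diagonal term $a_0^{(\alpha)}\|u^n\|^2=\|u^n\|^2$ and, for $j<n$, use $a_{n-j}^{(\alpha)}\le0$ (Lemma \ref{lem3-1}) so that $2a_{n-j}^{(\alpha)}(u^j,u^n)\ge a_{n-j}^{(\alpha)}(\|u^j\|^2+\|u^n\|^2)$; collecting the $\|u^n\|^2$ terms leaves the factor $\sum_{k=0}^{n-m-1}a_k^{(\alpha)}$, which is nonnegative because $a_k^{(\alpha)}\le0$ for $k\ge1$ while $\sum_{k=0}^\infty a_k^{(\alpha)}=a^{(\alpha)}(1)=0$ (Lemma \ref{lem3-1}). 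Finally, \eqref{sec3:2} follows the same pattern: separating the $j=n$ term gives $2b_0\|u^n\|^2$, and for $j<n$ the estimate $2b_{n-j}(u^j,u^n)\ge-\hat{b}_{n-j}(\|u^j\|^2+\|u^n\|^2)$ together with $\hat{b}_0=b_0$ leaves the residual coefficient $b_0-\sum_{k=1}^{n-m-1}\hat{b}_k$, which is nonnegative by the bound $\sum_{k\ge1}|b_k|\le b_0$ in \eqref{assumption-a}.

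I expect the main obstacle to be organizational rather than analytic, concentrated in \eqref{BalfD1}: one must verify that the pieces with $k\le m$, which arise separately from the convolution-quadrature weights $\omega^{(\alpha)}$ and from the starting weights $w_{j,k}^{(m)}$, recombine into precisely the quantity $W_{n,k}^{(m)}$ of \eqref{eq:Wnm}, with no stray boundary terms surviving the switch in the order of summation. Once the index ranges are handled correctly, every step reduces to the convolution theorem for power series and to the sign and size information already recorded in Lemma \ref{lem3-1} and the assumptions \eqref{assumption-a}--\eqref{assumption-c}, so no additional estimates are required.
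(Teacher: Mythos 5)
Your proposal is correct, and for four of the five identities it is exactly the (unstated) detail behind the paper's one-paragraph proof: \eqref{alphabeta} and \eqref{BalfD1} by interchanging sums and reading off Cauchy-product coefficients from $(1-z)^{\alpha_1}(1-z)^{\alpha_2}=(1-z)^{\alpha_1+\alpha_2}$ and $b(z)\omega^{(\alpha)}(z)=(1-z)^{\alpha}$ together with \eqref{eq:Wnm}, and \eqref{sec3:1}, \eqref{sec3:2} by the Cauchy--Schwarz bound $2(u,v)\leq\|u\|^2+\|v\|^2$ combined with the sign/summability facts $a^{(\alpha)}_k\leq 0$, $0<-\sum_{k\geq1}a^{(\alpha)}_k\leq 1$ and $\sum_{k\geq1}\hat{b}_k\leq b_0$. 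The one place you genuinely diverge is \eqref{BleqA}: you bound the composite coefficient $\sum_{\ell=0}^{k}\hat{b}_\ell a^{(-\alpha)}_{k-\ell}$ by $2b_0 a^{(-\alpha)}_k$ using the positivity assumption \eqref{assumption-c} via \eqref{eq:zeng-d}, whereas the paper deduces \eqref{BleqA} from the decay estimates alone --- $\hat{b}_\ell\lesssim \ell^{-\alpha-1}$ from \eqref{assumption-a}, $0\leq a^{(-\alpha)}_k\lesssim k^{\alpha-1}$ from \eqref{eq:zeng-a2-2}, and the discrete convolution bound \eqref{eq:zeng-a2-3-2} --- which also requires the implicit lower bound $a^{(-\alpha)}_k\gtrsim k^{\alpha-1}$ to convert the resulting $O(k^{\alpha-1})$ bound back into a multiple of $a^{(-\alpha)}_k$. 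Your route is shorter and yields the explicit constant $2b_0$, at the price of invoking \eqref{assumption-c}, which the paper's proof of \eqref{BleqA} deliberately avoids (keeping that inequality valid under \eqref{assumption-a} alone); since every application of the lemma in the paper assumes both \eqref{assumption-a} and \eqref{assumption-c}, nothing is lost in context, but your version is formally the weaker statement.
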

\begin{proof}
Eq. \eqref{alphabeta}  can be derived from $(1-z)^{\alpha_1}(1-z)^{\alpha_2}=(1-z)^{\alpha_1+\alpha_2}$,
Eq. \eqref{BalfD1} is derived from $b(z)\omega^{(\alpha)}(z)=(1-z)^{\alpha}$  and \eqref{eq:Wnm}.
The equality \eqref{BleqA} can be deduced from \eqref{assumption-a}, \eqref{eq:zeng-a2-2}, and \eqref{eq:zeng-a2-3-2}.
The Cauchy--Schwarz inequality $2(u,v)\leq \|u\|^2 + \|v\|^2$ and \eqref{eq:zeng-a2} (or \eqref{assumption-a})
yield  \eqref{sec3:1} (or \eqref{sec3:2}); see Ref. \refcite{WangZou19}. The proof is completed.
\end{proof}

\begin{lemma}\label{lem-10}
Let $0<\alpha\leq 1$, $a_n^{(-\alpha)}$ and $W_{n,k}^{(m)}$ be defined by \eqref{eq:zeng-a2-2} and \eqref{eq:Wnm}, respectively. Then
\begin{equation}\label{eq:Wnk-2}
\sum_{j=m+1}^na_{n-j}^{(-\alpha)}(W_{j,k}^{(m)})^2 \lesssim  \ell_n^{(\sigma_{m})},
 \quad 1\leq k \leq m,
\end{equation}
where   $\ell_n^{(\sigma_{m})}$ is  by
\eqref{eq:ell-n}.
\end{lemma}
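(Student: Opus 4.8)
The plan is to reduce the weighted sum to a single application of Lemma \ref{lem4-2} by first proving a pointwise decay estimate for the modified starting weights, namely
$$|W_{n,k}^{(m)}|\lesssim n^{\mu},\qquad \mu:=\max\{\sigma_m-p-\alpha,\,-\alpha-1\},\quad 1\le k\le m.$$
Granting this, since $a_{n-j}^{(-\alpha)}\ge 0$ by \eqref{eq:zeng-a2-2} and $W_{j,k}^{(m)}=0$ for $1\le j\le m$ (the defining sum in \eqref{eq:Wnm} is empty), I would write $(W_{j,k}^{(m)})^2\lesssim j^{2\mu}$ and apply \eqref{eq:zeng-a2-3-3} with exponent $\rho=2\mu$. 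A short computation gives $\max\{\alpha-1,\,2\mu+\alpha\}=\max\{\alpha-1,\,2\sigma_m-2p-\alpha\}$, the term $-\alpha-2$ being dominated by $\alpha-1$ because $2\alpha>-1$; this is exactly the exponent in $\ell_n^{(\sigma_m)}$ from \eqref{eq:ell-n}. Moreover the borderline case $\rho=2\mu=-1$ of \eqref{eq:zeng-a2-3-3} occurs precisely when $\sigma_m=p+\alpha-1/2$, which matches the logarithmic branch $n^{\alpha-1}\ln(n)$ of \eqref{eq:ell-n}. Thus everything reduces to the pointwise bound.

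To prove the pointwise bound I would first control the raw starting weights. After factoring out $\tau^{\sigma_k}$ the system \eqref{startw} is $\tau$-independent, so normalizing $t_j=j$ the weights solve the generalized Vandermonde system $\sum_{j=1}^m j^{\sigma_k}w_{n,j}^{(m)}=\rho_{n,k}$ with $\rho_{n,k}=\frac{\Gamma(\sigma_k+1)}{\Gamma(\sigma_k+1-\alpha)}n^{\sigma_k-\alpha}-\sum_{l=0}^n\omega_{n-l}^{(\alpha)}l^{\sigma_k}$. Applying Lemma \ref{lem2-1} to $u=t^{\sigma_k}$, whose exact Riemann--Liouville derivative is the first term of $\rho_{n,k}$, yields $|\rho_{n,k}|\lesssim n^{\sigma_k-p-\alpha}+n^{-\alpha-1}$. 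Since the coefficient matrix $(j^{\sigma_k})_{1\le k,j\le m}$ is a fixed invertible matrix independent of $n$ and $\tau$, inverting it gives $|w_{n,j}^{(m)}|\lesssim\max_{1\le k\le m}|\rho_{n,k}|\lesssim n^{\sigma_m-p-\alpha}+n^{-\alpha-1}=O(n^{\mu})$ uniformly in $j$, using $\sigma_m=\max_k\sigma_k$ and $n\ge1$. Combined with the classical decay $\omega_{j-k}^{(\alpha)}=O((j-k)^{-\alpha-1})=O(j^{-\alpha-1})$ of the convolution weights (valid because $k\le m$ is fixed while $j\ge m+1$; see Ref. \refcite{Lub86}), this gives $|w_{j,k}^{(m)}+\omega_{j-k}^{(\alpha)}|\lesssim j^{\mu}$ for $j\ge m+1$.

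Finally I would insert this into \eqref{eq:Wnm} and convolve against $b$. Using $b_0=O(1)$ and $|b_{n-j}|\lesssim(n-j)^{-\alpha-1}$ from \eqref{assumption-a},
$$|W_{n,k}^{(m)}|\lesssim n^{\mu}+\sum_{j=1}^{n-1}(n-j)^{-\alpha-1}j^{\mu}\lesssim n^{\max\{-\alpha-1,\,\mu\}}=n^{\mu},$$
where the second inequality is \eqref{eq:zeng-a2-3-2} and the last equality holds because $\mu\ge-\alpha-1$ by construction. This is the desired pointwise bound, which together with the first paragraph closes the argument.

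I expect the main obstacle to be the starting-weight step: correctly normalizing \eqref{startw} so that the weights are $\tau$-independent, extracting $|\rho_{n,k}|\lesssim n^{\sigma_k-p-\alpha}+n^{-\alpha-1}$ cleanly from Lemma \ref{lem2-1} (both truncation terms must be tracked, since the $n^{-\alpha-1}$ term can dominate for small $\sigma_k$), and justifying that inversion of the fixed generalized Vandermonde matrix preserves the $n$-dependence. The remaining convolution estimates are routine once Lemma \ref{lem4-2} is in hand.
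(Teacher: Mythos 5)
Your proposal is correct and follows essentially the same route as the paper's own proof: bound the raw starting weights $w_{n,j}^{(m)}$ via the Vandermonde system \eqref{startw} together with Lemma \ref{lem2-1}, convolve with $b$ using \eqref{assumption-a} and \eqref{eq:zeng-a2-3-2} to obtain the pointwise bound $|W_{n,k}^{(m)}|\lesssim n^{\max\{-\alpha-1,\,\sigma_m-p-\alpha\}}$, and then conclude with \eqref{eq:zeng-a2-3-3}. Your write-up is in fact more explicit than the paper's, which leaves the $\tau$-normalization, the inversion of the fixed generalized Vandermonde matrix, and the borderline logarithmic case $\sigma_m=p+\alpha-1/2$ implicit.
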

The proof of Lemma \ref{lem-10} is given in \ref{sec-D}.

For simplicity, we assume  that the nonlinear function $f(z)$ satisfies the global Lipschitz condition, i.e.,
\begin{equation}\label{Lips}
|f(z_1)-f(z_2)| \lesssim |z_1-z_2|.
\end{equation}
If $f(z)$ satisfies the local Lipschitz condition, then the temporal-spatial splitting technique
can be used to  analyze the convergence; see Refs. \refcite{LiSun2013} and \refcite{LiZhangZhang18}.
An alternative way to deal with the nonlinear term
is to construct a function $\bar{f}(z)$, satisfying the
global Lipschitz condition and
$\bar{f}(z)=f(z)$ for $|z|\leq 1+ \max_{0\le t \le T}\|u(t)\|_{L^\infty(\Omega)}$, where $u(t)$
is the solution of  \eqref{e1.1}. Replacing $f(u)$  with $\bar{f}(u)$ in \eqref{FLMM-3}
(or \eqref{FLMM-2}), one can obtain a new scheme, whose solution
is also the solution of   \eqref{FLMM-3}  (or \eqref{FLMM-2}); see Refs.   \refcite{BaoCai2012} and \refcite{WangZhou20}.

\subsubsection{Proof of Theorem \ref{thm2-2}}
By \eqref{B-alf}, \eqref{A-alf-m}, and \eqref{BalfD1}, the scheme \eqref{FLMM-3}  can be reformulated as
\begin{equation}\label{eq:zeng-5}
(\mathcal{D}^{\alpha,m}_{\tau}u_h^n,v)+(\mathcal{B}^{\alpha,m} \nabla u_h^n,\nabla v)
=\left(\mathcal{B}^{\alpha,m}F_h^n,v\right),\quad v\in X_h,
\end{equation}
where $F_h^n = P_{h}f(u^n_h)$.  Similarly, Eq. \eqref{TD-1}  can be written as
\begin{equation}\label{eq:zeng-1}\begin{aligned}
\mathcal{D}^{\alpha,m}_{\tau}u^n
=\mathcal{B}^{\alpha,m} \Delta u^n +\mathcal{B}^{\alpha,m} F^n  +\mathcal{B}^{\alpha,m}R^n,\qquad F^n = f(u^n).
\end{aligned}\end{equation}

Let $\xi_{h}^{n}=\pi_{h}^{1,0}u^{n}- u_{h}^{n} $ and $ \eta_h^n = \pi_{h}^{1,0}u^{n}-u^n$.
From \eqref{eq:zeng-5} and \eqref{eq:zeng-1}, we can obtain the following error equation
\begin{equation}\label{e3.22}
 (\mathcal{D}_{\tau}^{\alpha,m}\xi^n_h,v)
+ (\mathcal{B}^{\alpha,m}\nabla \xi^n_h,\nabla v)
= (\mathcal{B}^{\alpha,m}\widetilde{F}_h^n,v)+(G^n+\mathcal{B}^{\alpha,m} {R}^n,v), \quad v\in X_h,
\end{equation}
where $\widetilde{F}_h^n=P_{h} f(\pi_{h}^{1,0}u^{n})-P_{h}f(u_h^n)$ and
\begin{equation}\label{eq:Gn}
G^n = \mathcal{D}_{\tau}^{\alpha,m}\eta_h^n
- \sum_{j=m+1}^{n}b_{n-j} (P_{h}f(\pi_{h}^{1,0} u^j)-f(u^j)).
\end{equation}
From Lemma \ref{le3.3} and \eqref{Lips},  one has
\begin{eqnarray}\label{e3.38-2}
\|\widetilde{F}_h^n\|=\|P_{h} ( f(\pi_{h}^{1,0}u^{n})-f(u_h^n))\|\lesssim \|f(\pi_{h}^{1,0}u^{n})-f(u_h^n)\|\lesssim \|\xi^n\|.
\end{eqnarray}
We can similarly derive
$$\|P_{h}f(\pi_{h}^{1,0} u^j)-f(u^j)\|\leq \|P_{h}f(\pi_{h}^{1,0} u^j)-P_{h}f(u^j)\| + \|P_{h}f(u^j)-f(u^j)\|
\lesssim h^{2},$$
which, together with  \eqref{assumption-a} and $\|\mathcal{D}_{\tau}^{\alpha,m}\eta_h^n\|\lesssim h^{2}$, yields
\begin{equation}\label{eq:Gn-2}
\|G^n\|\lesssim \|\mathcal{D}_{\tau}^{\alpha,m}\eta_h^n\|
+ \sum_{j=m+1}^{n}\hat{b}_{n-j} \|P_{h}f(\pi_{h}^{1,0} u^j)-f(u^j)\| \lesssim  h^{2}.
\end{equation}
From  \eqref{time-error-2}, \eqref{assumption-a}, and \eqref{eq:zeng-a2-3-2}, we have
\begin{equation}\label{Rn-2}\begin{aligned}
\|\mathcal{B}^{\alpha,m} {R}^n\|
\lesssim &\tau^{\sigma_{m+1}-\alpha}\sum_{j=m+1}^{n}(n+1-j)^{-\alpha-1}
\Big(j^{\sigma_{m+1}-\alpha-p}+j^{-\alpha-1}\Big)\\
\lesssim &\tau^{\sigma_{m+1}-\alpha}\left(n^{\sigma_{m+1}-\alpha-p}+n^{-\alpha-1}\right).
\end{aligned}\end{equation}
Combining \eqref{eq:Gn-2}, \eqref{Rn-2}, and \eqref{eq:zeng-a2-3-3} yields
\begin{eqnarray}
\mathcal{A}^{-\alpha,m}_{\tau}(\|G^n\|^2+\|\mathcal{B}^{\alpha,m} {R}^n\|^2)&\lesssim& t_{n}^{\alpha}h^{4}
+\tau^{2\sigma_{m+1}-\alpha} \ell_n^{(\sigma_{m+1})}\label{NormRn},
\end{eqnarray}
where $\ell_n^{(\sigma_{m+1})}$ is defined  by \eqref{eq:ell-n}.

\begin{proof}
Let   $\Theta^n = \sum_{j=1}^m W_{n,j}^{(m)} \frac{\xi_h^j}{\tau^{\alpha}}.$
By \eqref{A-alf}, \eqref{A-alf-m}, and $\xi_h^0=0$, we rewrite \eqref{e3.22} as
\begin{equation}\label{error-eq}
 (\mathcal{A}_{\tau}^{\alpha,m}\xi^n_h,v)
+ (\mathcal{B}^{\alpha,m}\nabla \xi^n_h,\nabla v)
= (\mathcal{B}^{\alpha,m}\widetilde{F}_h^n,v)+(G^n+\mathcal{B}^{\alpha,m} {R}^n - \Theta^n,v),v\in X_h.
\end{equation}

The proof is finished in two steps.

\textbf{Step 1)}
Letting $n=1$ and $v=2\xi_h^1$ in  \eqref{error-eq} yields
\begin{equation}\label{e3.25-3}
2\gamma_1\|\xi_h^1\|^2 + 2{\tau^{\alpha}}\|\nabla \xi_h^1\|^2 =  {\tau^{\alpha}}(\widetilde{F}_h^1+G^1+R^1, 2\xi_h^1),
\end{equation}
where $\gamma_1=w_0^{(\alpha)}>0$ for $m=0$ and
$\gamma_1=\frac{\Gamma(\sigma_1+1)}{\Gamma(\sigma_1+1-\alpha)}>0$ for $m=1$.
Applying the Cauchy-Schwarz inequality and $\|\widetilde{F}_h^1\|\lesssim \|\xi^1_h\|$ (see \eqref{e3.38-2}), we obtain
\begin{equation}\label{e3.25-4}\begin{aligned}
2\gamma_1\|\xi_h^1\|^2 + 2\tau^{\alpha}\|\nabla \xi_h^1\|^2
&\leq C_1\tau^{2\alpha}(\|\xi_h^1\|^2 +\|G^1\|^2+\|R^1\|^2) + \gamma_1\|\xi_h^1\|^2.
\end{aligned}\end{equation}
If  $(\gamma_1-2C_1\tau^{2\alpha}) \geq 0$, i.e., $\tau\leq \left(2^{-1}\gamma_1/C_1\right)^{1/({2\alpha})}$,
then  \eqref{e3.25-4} leads to
\begin{equation}\label{e3.25-4-0}\begin{aligned}
\gamma_1\|\xi_h^1\|^2 + 4\tau^{\alpha}\|\nabla \xi_h^1\|^2
&\leq -(\gamma_1-2C_1\tau^{2\alpha})\|\xi_h^1\|^2 + 2C_1\tau^{2\alpha} (\|G^1\|^2+\|R^1\|^2)\\
&\leq  2C_1\tau^{2\alpha} (\|G^1\|^2+\|R^1\|^2).
\end{aligned}\end{equation}
Combining  $R^1=O(\tau^{\sigma_2-\alpha})$, \eqref{eq:Gn-2},  and \eqref{e3.25-4-0} yields
\begin{equation}\label{e3.25-4-2}
\|\xi_h^1\|^2 \lesssim \tau^{2\alpha}\left(\|G^1\|^2+ \|R^1\|^2\right)
 \lesssim \tau^{2\alpha}  h^{4} + \tau^{2\sigma_{m+1}}.
\end{equation}

\textbf{Step 2)}  For $ n\geq m+1$, we can
take $v=2\xi_h^n$ in \eqref{error-eq} and use \eqref{sec3:1}--\eqref{sec3:2} to obtain
\begin{equation}\label{e3.29}\begin{aligned}
&\mathcal{A}^{\alpha,m}_{\tau}\|\xi_h^n\|^2+2b_0\|\nabla \xi_h^n\|^2
-\widehat{\mathcal{B}}^{\alpha,m}\|\nabla \xi_h^n\|^2 \\
\leq&  (\mathcal{B}^{\alpha,m}\widetilde{F}_h^n,2\xi_h^n)
+ (G^n+\mathcal{B}^{\alpha,m} R^n- \Theta^n,2\xi_h^n)\\
\leq& \sum_{j=m+1}^{n}\hat{b}_{n-j}(\|\widetilde{F}_h^j\|^2+ \|\xi_h^n\|^2)
+ \|G^n+\mathcal{B}^{\alpha,m}R^n-\Theta^n\|^2+  \|\xi_h^n\|^2  \\
\leq&  C_2\widehat{\mathcal{B}}^{\alpha,m}\|\xi_h^n\|^2 + (1+2b_0)\|\xi_h^n\|^2
+3\rho^n,
\end{aligned}\end{equation}
where  we used $\sum_{j=1}^{n}\hat{b}_{n-j}\leq 2b_0$ and $\|\widetilde{F}_h^j\|\lesssim \|\xi_h^j\|$, and $\rho^n$ is given by
\begin{equation}\label{rhoRe}
\rho^n=\|G^n\|^2 +\|\mathcal{B}^{\alpha,m}R^n\|^2 +\|\Theta^n\|^2.
\end{equation}

Applying $\mathcal{A}^{-\alpha,m}_{\tau}$  on both sides of \eqref{e3.29},
using $\mathcal{A}^{-\alpha,m}_{\tau}\mathcal{A}^{\alpha,m}_{\tau}\|\xi_h^n\|^2=
\mathcal{A}^{0,m}_{\tau}\|\xi_h^n\|^2=\|\xi_h^n\|^2$
(see \eqref{alphabeta}) and  \eqref{eq:zeng-d}, we obtain
\begin{equation}\label{e3.29-2}\begin{aligned}
 &\|\xi_h^n\|^2 + \tau^{\alpha}\sum_{j=m+1}^{n}c_{n-j}\|\nabla \xi_h^j\|^2\\
=&\mathcal{A}^{-\alpha,m}_{\tau}\mathcal{A}^{\alpha,m}_{\tau}\|\xi_h^n\|^2
+\mathcal{A}^{-\alpha,m}_{\tau} (2b_0 \|\nabla \xi_h^n\|^2
-  \widehat{\mathcal{B}}^{\alpha,1}\|\nabla \xi_h^n\|^2)\\
\leq& C_2\mathcal{A}^{-\alpha,m}_{\tau}\widehat{\mathcal{B}}^{\alpha,m}\|\xi_h^n\|^2
+(1+2b_0)\mathcal{A}^{-\alpha,m}_{\tau} \|\xi_h^n\|^2+3 \mathcal{A}^{-\alpha,m}_{\tau} \rho^n.
\end{aligned}\end{equation}
By $c_n\geq 0$ (see \eqref{assumption-c}) and
$\mathcal{A}^{-\alpha,m}_{\tau}\widehat{\mathcal{B}}^{\alpha,m}\|\xi_h^n\|^2
\lesssim \mathcal{A}^{-\alpha,m}_{\tau}\|\xi_h^n\|^2$ (see \eqref{BleqA}), we obtain
\begin{equation}\label{e3.29-4}
 \|\xi_h^n\|^2 + c_0\tau^{\alpha}\|\nabla \xi_h^n\|^2
\leq C_3 \mathcal{A}^{-\alpha,m}_{\tau}
\left(\|\xi_h^n\|^2+ c_0\tau^{\alpha}\|\nabla \xi_h^n\|^2\right)+3 \mathcal{A}^{-\alpha,m}_{\tau}\rho^n.
\end{equation}
If $\tau \leq (2C_3)^{-1/\alpha}$, then we can
apply Corollary \ref{corollary-1}   to obtain
\begin{equation}\label{e4.3111}\begin{aligned}
\|\xi^n_h\|^2  &\lesssim  \|\xi_h^n\|^2 + c_0\tau^{\alpha}\|\nabla \xi_h^n\|^2 \lesssim   \mathcal{A}^{-\alpha,m}_{\tau}\rho^n.
\end{aligned}\end{equation}
From \eqref{eq:Wnk-2} and \eqref{NormRn},      we have
\begin{equation}\label{Arhon}\begin{aligned}
 \mathcal{A}^{-\alpha,m}_{\tau}\rho^n &= \mathcal{A}^{-\alpha,m}_{\tau}
(\|G^n\|^2+\|\mathcal{B}^{\alpha,m}R^n\|^2+\|\Theta^n\|^2)\\
&\lesssim t_{n}^{\alpha}h^{4}+\tau^{2\sigma_{m+1}-\alpha}  \ell_n^{(\sigma_{m+1})}
+ \tau^{\alpha}\ell_n^{(\sigma_{m})} \sum_{k=1}^m\|\xi^k/\tau^{\alpha}\|^2.
\end{aligned}\end{equation}
Combing \eqref{e4.3111}, \eqref{Arhon}, and  \eqref{e3.25-4-2},
and using $ \ell_n^{(\sigma_{m})}\leq \ell_n^{(\sigma_{m+1})}$,
we have
\begin{equation}\label{e4.31111}\begin{aligned}
\|\xi^n_h\|^2&\lesssim  t_{n}^{\alpha}h^{4}+\tau^{2\sigma_{m+1}-\alpha}  \ell_n^{(\sigma_{m+1})}.
\end{aligned}\end{equation}
Using \eqref{e4.3111},
$\|u_h^n-u(\cdot,t_n)\| \leq\|\eta_h^n\| + \|\xi_h^n\|$,  and $\|\eta_h^n\|\lesssim h^{2}$
yields \eqref{eq:zeng-7}, which completes the  proof.
\end{proof}
\subsubsection{Proof of Theorem \ref{thm2-1}}
Similar to \eqref{e3.22},
the error equation of \eqref{FLMM-3} reads as
\begin{equation}\label{eq43-17}
(\mathcal{A}^{\alpha,m}_{\tau}\xi_h^n,v)+( \mathcal{B}^{\alpha,m} \nabla \xi_h^j,\nabla v)
= (\mathcal{B}^{\alpha,m}\widetilde{F}_h^n,v)\\
+(H^n + \mathcal{B}^{\alpha,m}  {R}^n- \Phi^n ,v),
\end{equation}
where   $\Phi^n=\sum_{j=1}^m W_{n,j}^{(m)}  ({e^j}/{\tau^{\alpha}})$, $e^j=(u^j-u^0)-(u_h^j-u_h^0)$, and
\begin{equation}\begin{aligned}\label{eq43-18}
\|H^n\|&= \|\mathcal{A}^{\alpha,m}_{\tau}\eta^n_h-\sum_{j=m+1}^{n}b_{n-j} (P_{h}f(\pi_{h}^{1,0} u^j)-f(u^j))\|\lesssim h^{2}.
\end{aligned}\end{equation}
The error equation \eqref{eq43-17} is very similar to \eqref{error-eq}.
We can immediately obtain the convergence for the method  \eqref{FLMM-3}.

\begin{proof}
From \eqref{e4.3111} and \eqref{Arhon}, we can obtain
\begin{equation} \label{eq43-8}\begin{aligned}
\|\xi_h^n\|^2
\lesssim& \mathcal{A}^{-\alpha,m}_{\tau}
\Big(\|\mathcal{B}^{\alpha,m}R^n\|^2+\|H^n\|^2   +  \|\Phi^n\|^2\Big)\\
\lesssim& h^{4}+  \tau^{2\sigma_{m+1}-\alpha} \ell_n^{(\sigma_{m+1})}
+\tau^{\alpha}\ell_n^{(\sigma_{m})}\sum_{k=1}^m \|e^k/\tau^{\alpha}\|^2.
\end{aligned}\end{equation}
Applying $\|u_h^n-u(\cdot,t_n)\| \leq\|\eta_h^n\| + \|\xi_h^n\|$ and $\|\eta_h^n\|\lesssim h^{2}$
completes the proof.
\end{proof}


\section{Applications}\label{sec-5}
We present $\omega^{(\alpha)}(z)$ used in \eqref{Dtau}. We   discuss the use of the FBDF-1
that is also known as the Gr\"{u}nwald--Letnikov formula, the FBDF-2,
and the GNGF-2 to discretize the Caputo fractional derivative,
where the generating functions $\omega^{(\alpha)}(z)$ for these methods are shown
in Table \ref{s5:tb1}, while the  generating functions
$b(z)$ and $\hat{b}(z)$ are also displayed in Table \ref{s5:tb1}.

\begin{table}[!ht]
\caption{The generating functions $\omega^{(\alpha)}(z)$, $b(z)$, and $\hat{b}(z)$.}\label{s5:tb1}
\centering\footnotesize
\begin{tabular}{|c|c|c|c|c|c|c|c|c|c|c|c|c|}
\hline
&$\omega^{(\alpha)}(z)$& $b(z)$& $\hat{b}(z)$  \\ \hline
FBDF-1&$(1-z)^{\alpha}$&  1  & 1 \\
FBDF-2&$(\frac{3}{2}-2z+\frac{1}{2}z^2)^{\alpha}$&
$(3/2-z/2)^{-\alpha}$&$(3/2-z/2)^{-\alpha}$ \\
GNGF-2&$(1-z)^{\alpha}\left(1+\frac{\alpha}{2} - \frac{\alpha}{2} z\right)$&
$\left(1+\frac{\alpha}{2} - \frac{\alpha}{2} z\right)^{-1}$&
$\left(1+\frac{\alpha}{2} - \frac{\alpha}{2} z\right)^{-1}$ \\
\hline
\end{tabular}
\end{table}

In Examples \ref{eg5-1}--\ref{eg5-3}, we verify  that the assumptions \eqref{assumption-a}
and \eqref{assumption-c} hold for the FBDF-1, FBDF-2, and GNGF-2.

\begin{example}[FBDF-1]\label{eg5-1}
From Table \ref{s5:tb1}, it is very easy to verify that the
assumptions \eqref{assumption-a}--\eqref{assumption-c} hold if the FBDF-1 is used,
the details are omitted.
\end{example}

\begin{example}[FBDF-2]\label{eg5-2}
From Table \ref{s5:tb1}, it is easy to obtain
\begin{align*}
\hat{b}(z)=&b(z)=(3/2-z/2)^{-\alpha}
=(3/2)^{-\alpha}\sum_{n=0}^{\infty}3^{-n}a^{(-\alpha)}_nz^n,\\
b_n=&(3/2)^{-\alpha} 3^{-n}a^{(-\alpha)}_n \lesssim 3^{-n} \lesssim n^{-1-\alpha},n>0,\\
b_0=&(2/3)^{\alpha}> 1-(2/3)^{\alpha}=\sum_{n=1}^{\infty}\hat{b}_n.
\end{align*}
Hence, the assumption \eqref{assumption-a}  holds.
The proof  of   \eqref{assumption-c} is presented in \ref{sec-C}.
\end{example}

\begin{example}[GNGF-2]\label{eg5-3}
From Table \ref{s5:tb1}, it is easy to obtain
\begin{align*}
\hat{b}(z)=&b(z)=\left(1+\frac{\alpha}{2}- \frac{\alpha}{2} z\right)^{-1}
=\frac{2}{\alpha+2}\sum_{n=0}^{\infty} \left(\frac{\alpha}{2+\alpha}\right)^n z^n,\\
b_n=&\frac{2}{\alpha+2} \left(\frac{\alpha}{2+\alpha}\right)^n \lesssim n^{-1-\alpha},\ n>0,\\
b_0=&\frac{2}{\alpha+2}> 1-\frac{2}{\alpha+2}=\frac{\alpha}{\alpha+2}
=\sum_{n=1}^{\infty}\hat{b}_n,
\end{align*}
which verifies the assumption \eqref{assumption-a}.
The proof of  \eqref{assumption-c} is presented in  \ref{sec-C}.
\end{example}

Next, we show that the   BN-$\theta$ method in Ref. \refcite{Yinbaoli2020}
can be applied in the present framework.
The BN-$\theta$ method recovers   the  FBDF-2 (or GNGF-2) if $\theta=0$ (or $\theta=1/2$).
In the following example, we consider the BN-$\theta$ method for  $0\leq \theta \leq 1/2$.
\begin{example}[BN-$\theta$ method]\label{LY-BN_theta}
The generating functions $\omega^{(\alpha)}(z)$, $b(z)$, and $\hat{b}(z)$  are given by
\begin{align}
&\omega^{(\alpha)}(z)
=(1-z)^{\alpha} \left(\frac{3}{2}-\frac{z}{2} -\theta (1-z)\right)^{\alpha}
({1+\theta\alpha(1- z)}),\label{bntheta-w}\\
&b(z)=\hat{b}(z)=\frac{(1-z)^{\alpha}}{\omega^{(\alpha)}(z)}
=\frac{\left(\frac{3}{2}-\theta\right)^{-\alpha}}{1+\theta\alpha}
\frac{\left(1-\frac{1-2\theta}{3-2\theta}z\right)^{-\alpha}}
{1 -\frac{\theta\alpha}{1+\theta\alpha}z},\label{bntheta-b}
\end{align}
where $b_n$ can be expressed by
\begin{equation}\label{b-n}
b_n=b_0\sum_{j=0}^n a_{j}^{(-\alpha)}\left(\frac{1-2\theta}{3-2\theta}\right)^j
\left(\frac{\theta\alpha}{1+\theta\alpha}\right)^{n-j},\qquad
b_0=\frac{\left({3}/{2}-\theta\right)^{-\alpha}}{1+\theta\alpha}.
\end{equation}
Eq. \eqref{appendix-C-3} implies
$b_n\lesssim 2^{-n} \lesssim n^{-1-\alpha}$ and
$b_0 > \sum_{n=1}^{\infty}\hat{b}_n$ can be derived from
$$\frac{\left(1-\frac{1-2\theta}{3-2\theta}\right)^{-\alpha}}
{1 -\frac{\theta\alpha}{1+\theta\alpha}}
=(1+\theta\alpha)\left(\frac{3}{2}-\theta\right)^{\alpha}
\leq (1+ \theta)\left(\frac{3}{2}-\theta\right)\leq \left(\frac{1+3/2}{2}\right)^2<2,$$
which verifies \eqref{assumption-a}.
The proof of  \eqref{assumption-c} is presented in \ref{sec-C}.
\end{example}

In the rest of this section, we simply address that the two Crank-Nicolson (CN) type methods in Ref. \refcite{ZengLLT15}
 can be analyzed in the present frame work.
We do not show how to obtain the CN type methods, readers can refer to Ref. \refcite{ZengLLT15} for details.

The  CN  Galerkin FEM for  solving \eqref{e1.1}  reads as: Given $u^0_h=\pi_{h}^{1,0} u_0$,  find $u_h^n\in X_h$ for $n\geq 1$, such that
\begin{equation}\label{eq:zeng-sisc-13-5} \begin{aligned}
&  \frac{1}{\tau^{\alpha}}\sum_{j=1}^na_{n-j}^{(\alpha)}(u_h^j-u_h^0,v)
+\sum_{j=1}^nb_{n-j}(\nabla u_h^j,\nabla v)  \\
  =&\sum_{j=1}^nb_{n-j}\left(P_{h}f(u^{j}_h),v\right)
 +B_n\left(P_{h}f(u^{0}_h),v\right) - B_n(\nabla u_h^0,\nabla v),
\quad \forall v\in X_h,
\end{aligned} \end{equation}
where $B_n$ and $b_n$ are given by
\begin{eqnarray}
B_n& =& \frac{1}{\Gamma(1+\alpha)}\sum_{j=1}^na_{n-j}^{(\alpha)}j^{\alpha}-\sum_{j=0}^{n-1}b_j
=O(n^{-1}),\label{eq:zeng-sisc-13-4}\\
b(z)&=& 1-\frac{\alpha}{2} + \frac{\alpha}{2}z \quad \text{or}\quad b(z) = 2^{-\alpha}{(1+z)^{\alpha}}.\label{eq:bn-2}
\end{eqnarray}
If $\alpha\to1$, \eqref{eq:zeng-sisc-13-5} recovers the classical CN method. Obviously,
the  scheme  \eqref{eq:zeng-sisc-13-5}  is similar to \eqref{eq:zeng-5},
we can follow the convergence proof of \eqref{eq:zeng-5}
to prove the stability and convergence of \eqref{eq:zeng-sisc-13-5}
if the assumptions \eqref{assumption-a} and \eqref{assumption-c} hold.

Next, we verify the assumptions \eqref{assumption-a} and \eqref{assumption-c},
but we need to replace  $b(z)$ defined by \eqref{eq:omega} with \eqref{eq:bn-2}.
\begin{example}\label{eg5-4}
For  $b(z)=1-\frac{\alpha}{2} + \frac{\alpha}{2}z$,  we have  $\hat{b}(z)=b(z)$.
The assumption \eqref{assumption-a} follows from $b_0=1-\frac{\alpha}{2}\geq \frac{\alpha}{2}=\sum_{n=1}^{\infty}\hat{b}_n$.
For $n=0$, we have $c_0=b_0=1-\alpha/2>0$.
Using $a_{n-1}^{(-\alpha)}/a_{n}^{(-\alpha)}
= n /(n-1+\alpha)\leq \alpha^{-1}$ for
$n\geq 1$ yields
\begin{equation*}
c_n/a_n^{(-\alpha)}=2b_0-(b_0+b_1a_{n-1}^{(-\alpha)}/a_n^{(-\alpha)})
\geq (1-\alpha)/2 \geq 0,
\end{equation*}
which verifies  \eqref{assumption-c}.
\end{example}

\begin{example}\label{eg5-5}
For $b(z) = 2^{-\alpha}{(1+z)^{\alpha}}$,
we have $\hat{b}(z)= 2^{-\alpha}\left(2-(1-z)^{\alpha}\right).$
It is straightforward  to obtain
\begin{equation*}
b_n=2^{-\alpha}(-1)^na^{(\alpha)}_n,\qquad
b_0=2^{-\alpha}\geq 2^{-\alpha}=\sum_{n=1}^{\infty}\hat{b}_n,
\end{equation*}
which  verifies the assumption \eqref{assumption-a}. For $c_n$, we
have $c_0=b_0=2^{-\alpha}>0$ and $c_n= 0$ for $n>0$,
which verifies  \eqref{assumption-c}.

\end{example}
\section{Fast time-stepping methods}\label{sec-6}
We call   \eqref{FLMM-3} the direct method, which requires $O(n_T)$ memory and $O(n_T^2)$ computational cost in time.
In this section, we first present the fast version of \eqref{FLMM-3}, which significantly reduces the
memory  requirement and computational cost. Then, we propose a simple approach to  prove that the  fast method is convergent
as the direct method.


The basic idea   for fast calculating the discrete convolution
$\sum_{j=0}^n\omega_{n-j}^{(\alpha)}u^j$
is to represent the convolution weight  $\omega_{n}^{(\alpha)}$ as an integral (see Refs.
\refcite{BanjaiLopez18,GuoZeng19,LopLubSch08,SunNieDeng19} and \refcite{ZengTBK2018}).
We do not show how to derive the integral representation of $\omega_{n}^{(\alpha)}$,
this is not the main goal of this work, readers can refer Refs. \refcite{GuoZeng19} and \refcite{LopLubSch08}
for details.
We adopt the fast method in Ref. \refcite{GuoZeng19} for illustration, but
the fast methods in Refs. \refcite{BanjaiLopez18,LopLubSch08,SunNieDeng19} and \refcite{ZengTBK2018} can be applied in the present framework.

Due to $\sigma=0$ in (4.11) of Ref. \refcite{GuoZeng19}, the convolution weight $\omega_{n}^{(\alpha)}$ is expressed into
\begin{equation}\label{s6-0-1}
\omega_n^{(\alpha)} =\tau^{1+\alpha}
\int_{-\infty}^{\infty}(1+\tau e^x)^{-1-n} \phi(x) d,\qquad \phi(x) = -\frac{\sin(\alpha\pi)}{\pi}\frac{e^{(1+\alpha)x}}{b(-e^x)},
\end{equation}
where $b(z)$ is defined by \eqref{eq:omega}.
The above integral  can be approximated by the truncated trapezoidal rule  given by (see Ref. \refcite{GuoZeng19} (4.15))
\begin{equation}\label{s6-0-2}
\omega_n^{(\alpha)}=  \widetilde{\omega}_n^{(\alpha)}+O(n^{-\alpha-1}\epsilon),\qquad
\widetilde{\omega}_n^{(\alpha)}=\tau^{1+\alpha}\sum_{{\ell}=1}^{Q}\varpi_{\ell}(1+\tau  e^{\lambda_{\ell}})^{-1-n} ,
\quad n\geq n_0,
\end{equation}
where   $n_0$ is a suitable positive integer satisfying $n_0 > m+1$, the
quadrature point  $\lambda_{\ell}=x_{\min}+(\ell-1) \Delta x$, the quadrature weight
$\varpi_{\ell}=\Delta x \phi(x_{\ell})$,   $\Delta x = (x_{\max}-x_{\min})/Q$, $Q$ is the number of
quadrature points satisfying $Q\ll n_T$.
For a given precision $\epsilon$,  $x_{\max}$ and $x_{\min}$ are given by\cite{GuoZeng19}
$$
x_{\min }=\frac{\log (\epsilon)}{1+\alpha}-\log \left(n_{T} \tau\right), \quad x_{\max }=\log \left(\frac{-2 \log (\epsilon)+2(1+\alpha) \log \left(n_{0} \tau\right)}{n_{0} \tau}\right).
$$


With \eqref{s6-0-2}, we define the fast convolution quadrature  operator ${}_FD_{\tau}^{\alpha,m}$ as
\begin{equation}\label{s6-0-3-0}
{}_FD_{\tau}^{\alpha,m} u^n=
\frac{1}{\tau^{\alpha}}\sum_{j=n-n_{0}+1}^{n} \omega_{n-j}^{(\alpha)} (u^j-u^0)
+\frac{1}{\tau^{\alpha}}\sum_{j=1}^{n-n_{0}}  \widetilde{\omega}_{n-j}^{(\alpha)} (u^j-u^0)
+\frac{1}{\tau^{\alpha}}\sum_{j=1}^{m}w_{n,j}^{(m)} (u^{j}-u^0).
\end{equation}
Using \eqref{s6-0-2}, we  find that
${\tau^{-\alpha}}\sum_{j=1}^{n-n_{0}} \widetilde{\omega}_{n-j}^{(\alpha)} (u^j-u^0)$
in \eqref{s6-0-3-0} can be calculated by
\begin{equation}\label{s6-0-3}
\frac{1}{\tau^{\alpha}}\sum_{j=1}^{n-n_{0}}
 \widetilde{\omega}_{n-j}^{(\alpha)} (u^j-u^0)
= \sum_{{\ell}=1}^{Q}\varpi_{\ell}y^{n-n_0}_{\ell},
\end{equation}
where $y^{n}_{\ell}$ satisfies the following recurrence relation
\begin{equation}\label{s6-0-4}
y^{n}_{\ell}= \frac{1}{1+\tau e^{\lambda_{\ell}} }
\left[y^{n-1}_{\ell}+\tau (u^{n-1}-u^0)\right], \quad y^{0}_{\ell}=0.
\end{equation}
Clearly, the discrete convolution   $\frac{1}{\tau^{\alpha}}\sum_{j=1}^{n-n_{0}}  \widetilde{\omega}_{n-j}^{(\alpha)} (u^j-u^0)$
in \eqref{s6-0-3-0} is reformulated as  \eqref{s6-0-3}, which requires
$O(Q)$ storage and $O(Qn_T)$ computational cost.

We  replace $D_{\tau}^{\alpha,m} $ in \eqref{FLMM-3} with ${}_FD_{\tau}^{\alpha,m}$
to obtain the fast time-stepping Galerkin FEM for \eqref{e1.1} as:
Find ${}_Fu_h^n\in X_h$ for $n\geq n_0>m+1$, such that
\begin{equation}\label{s6-1}\left\{ \begin{aligned}
&({}_FD_{\tau}^{\alpha,m}{}_Fu_{h}^n,v)
+(\nabla {}_Fu_{h}^n,\nabla v)  =\left(P_{h}f({}_Fu_{h}^{n}),v\right),\quad \forall v\in X_h,\\
&{}_Fu_h^j=u_h^{j},\quad 0 \leq j \leq n_0-1,
\end{aligned} \right.\end{equation}
where $u_h^j$ is the solution of the direct method \eqref{FLMM-3} and
${}_FD_{\tau}^{\alpha,m}$ is defined by \eqref{s6-0-3}.

According to Refs. \refcite{GuoZeng19} and \refcite{Trefethen14}, $\widetilde{\omega}_n^{(\alpha)}$  can be expressed by
\begin{equation}\label{s6-0-5}
\widetilde{\omega}_n^{(\alpha)}  =(1+\varepsilon_n)\omega_n^{(\alpha)},
\end{equation}
where $\varepsilon_{n}$ is the  error that can be made arbitrarily small and
$\varepsilon_n=0$ for $0\leq n < n_0$.

We have the following theorem, the proof ow which is given in \ref{sec-E}.
\begin{theorem}\label{thm6-1}
Let $u_h^n$ and ${}_Fu_{h}^n$ be the solutions of \eqref{FLMM-3}  and
\eqref{s6-1}, respectively.
If the conditions in Theorem \ref{thm2-1} hold, $m\leq n_0$, and
$|\varepsilon_{n}| \lesssim \tau^{\alpha}\varepsilon$,   then
\begin{equation}\label{s6-2}
\|{}_Fu_{h}^n-u_h^n\|\lesssim \varepsilon.
\end{equation}
\end{theorem}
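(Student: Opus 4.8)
The plan is to estimate the difference $\zeta^n := {}_Fu_h^n-u_h^n$ between the fast and the direct solutions by reusing the energy--Gr\"onwall machinery already developed for Theorem~\ref{thm2-1}, treating the quadrature error of the fast convolution as an extra perturbation. Since ${}_Fu_h^j=u_h^j$ for $0\le j\le n_0-1$ and $n_0>m+1$, we have $\zeta^j=0$ for $0\le j\le n_0-1$; in particular $\zeta^j=0$ for $1\le j\le m$, so the starting-value term $\Phi^n$ of \eqref{eq43-17} is absent. First I would subtract \eqref{FLMM-3} from \eqref{s6-1} and, using \eqref{s6-0-5} together with \eqref{Dtau} and \eqref{s6-0-3-0}, split the operators as ${}_FD_{\tau}^{\alpha,m}{}_Fu_h^n-D_{\tau}^{\alpha,m}u_h^n = D_{\tau}^{\alpha,m}\zeta^n+\mathcal{R}^n+\mathcal{S}^n$, where (recalling $\zeta^0=0$)
$$\mathcal{R}^n=\frac{1}{\tau^{\alpha}}\sum_{j=1}^{n-n_0}\varepsilon_{n-j}\,\omega_{n-j}^{(\alpha)}(u_h^j-u_h^0),\qquad \mathcal{S}^n=\frac{1}{\tau^{\alpha}}\sum_{j=1}^{n-n_0}\varepsilon_{n-j}\,\omega_{n-j}^{(\alpha)}\zeta^j.$$
This produces the error equation $(D_{\tau}^{\alpha,m}\zeta^n,v)+(\nabla\zeta^n,\nabla v)=(P_h(f({}_Fu_h^n)-f(u_h^n)),v)-(\mathcal{R}^n+\mathcal{S}^n,v)$.

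Next I would apply $\mathcal{B}^{\alpha,m}$ and invoke \eqref{BalfD1}; because the starting weights act on the vanishing $\zeta^j$ ($1\le j\le m$), one has $\mathcal{D}_{\tau}^{\alpha,m}\zeta^n=\mathcal{A}_{\tau}^{\alpha,m}\zeta^n$, so the equation takes exactly the form \eqref{error-eq} with $\widetilde F_h^n=P_h(f({}_Fu_h^n)-f(u_h^n))$ satisfying $\|\widetilde F_h^n\|\lesssim\|\zeta^n\|$ by \eqref{Lips}, with $G^n=\Theta^n=0$, and with $\mathcal{B}^{\alpha,m}R^n$ replaced by $\mathcal{B}^{\alpha,m}(\mathcal{R}^n+\mathcal{S}^n)$. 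The estimate is only needed for $n\ge n_0$, where the generic step (the analogue of Step~2 in the proof of Theorem~\ref{thm2-2}) applies verbatim, so once $\tau$ is small enough for Corollary~\ref{corollary-1} the argument leading to \eqref{e4.3111} gives $\|\zeta^n\|^2\lesssim\mathcal{A}_{\tau}^{-\alpha,m}\left(\|\mathcal{B}^{\alpha,m}\mathcal{R}^n\|^2+\|\mathcal{B}^{\alpha,m}\mathcal{S}^n\|^2\right)$.

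It then remains to bound the two perturbations. Using $|\varepsilon_{n-j}|\lesssim\tau^{\alpha}\varepsilon$, the summability $\sum_{k\ge0}|\omega_k^{(\alpha)}|<\infty$ (the weights decay like $O(k^{-\alpha-1})$, cf. Lemma~\ref{lem3-1}), and the uniform bound $\|u_h^j-u_h^0\|\lesssim1$ coming from the convergence in Theorem~\ref{thm2-1}, the $\tau^{-\alpha}$ cancels against $\tau^{\alpha}$ and I get $\|\mathcal{R}^n\|\lesssim\varepsilon$; then $\sum_k|b_k|\lesssim b_0$ from \eqref{assumption-a} yields $\|\mathcal{B}^{\alpha,m}\mathcal{R}^n\|\lesssim\varepsilon$, and since $\mathcal{A}_{\tau}^{-\alpha,m}(1)\lesssim t_n^{\alpha}\le T^{\alpha}$ we obtain $\mathcal{A}_{\tau}^{-\alpha,m}\|\mathcal{B}^{\alpha,m}\mathcal{R}^n\|^2\lesssim\varepsilon^2$. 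Applying the same computation to $\mathcal{S}^n$, whose factors $\zeta^j$ all carry indices $j\le n-n_0<n$, gives $\mathcal{A}_{\tau}^{-\alpha,m}\|\mathcal{B}^{\alpha,m}\mathcal{S}^n\|^2\lesssim\varepsilon^2\max_{0\le j\le n-1}\|\zeta^j\|^2$. Writing $E_N=\max_{0\le n\le N}\|\zeta^n\|$, the two estimates combine into $E_N^2\le C\varepsilon^2+C\varepsilon^2E_N^2$, and for $\varepsilon$ small enough that $C\varepsilon^2\le1/2$ this closes to $E_N\lesssim\varepsilon$, i.e.\ \eqref{s6-2}. The hard part will be the self-referential term $\mathcal{S}^n$: unlike the genuine truncation error it carries the \emph{unknown} differences $\zeta^j$, so one cannot simply quote the direct-method bound; the key observation that tames it is that its prefactor is proportional to $\varepsilon$, which lets it be absorbed either by the smallness argument above or by folding it into the discrete fractional Gr\"onwall inequality. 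A secondary technical point is justifying $\sum_k|\omega_k^{(\alpha)}|<\infty$ and the uniform $L^2$ boundedness of the direct solution $u_h^j$.
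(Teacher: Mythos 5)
Your proposal is correct and follows essentially the same route as the paper's (sketched) proof in \ref{sec-E}: subtract \eqref{FLMM-3} from \eqref{s6-1}, use that the two solutions coincide for $n<n_0$, split the quadrature perturbation into a known part driven by $u_h^j-u_h^0$ (your $\mathcal{R}^n$, the paper's $H^n$, bounded by $\varepsilon$) and a self-referential part carrying the unknown difference (your $\mathcal{S}^n$, the paper's $\sum_{j\ge n_0}\widetilde{b}_{n-j}\theta^j$ with $|\widetilde b_n|\lesssim\varepsilon n^{-\alpha-1}$), and then rerun the energy--Gr\"onwall argument of Theorem \ref{thm2-1}. The only cosmetic differences are that the paper works with the operators $\mathcal{A}^{\alpha,n_0-1}_{\tau},\mathcal{B}^{\alpha,n_0-1}$ rather than $\mathcal{B}^{\alpha,m}$, and absorbs the self-referential term directly into the Gr\"onwall constant (needing only $\tau$ small) instead of your $E_N^2\le C\varepsilon^2+C\varepsilon^2E_N^2$ bootstrap, which additionally requires $\varepsilon$ small --- harmless here, since $\varepsilon$ is an arbitrarily small precision parameter.
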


\section{Numerical results}\label{sec:numer}
In this section, we  perform numerical experiments to verify the efficiency of the
scheme \eqref{FLMM-3}. We focus on the following two aspects:
\begin{itemlist}
  \item Verify the accuracy and convergence of the scheme \eqref{FLMM-3} when
  the regularity of the analytical solution is known, i.e., $\delta_k$ is known.
  In such a case, the optimal choice of $\sigma_k$  should be
  $\sigma_k=\delta_k$; see Tables   \ref{table-11}--\ref{table-12}.
  \item If the regularity of the analytical solution is unknown, the method \eqref{FLMM-3}
  still works well by choosing suitable $\sigma_k$. For example, select $\sigma_k=k\alpha$ or
$\sigma_k \in \{\sigma_{\ell,j}|\sigma_{\ell,j}=\ell+j\alpha,\ell\in Z^+,j\in Z^+\}$,
accurate numerical solutions can still be obtained, the related numerical results are shown
in Tables \ref{table-13}--\ref{tbl-42}.
\end{itemlist}

At most four  correction terms are used  to achieve accurate  numerical solutions,
which  verifies that the present time-stepping \eqref{FLMM-3} is efficient.
This also demonstrates that Lubich's convolution quadrature with correction terms\cite{Lub86} is practically valuable.

\begin{example}\label{eg1}
Consider the  time-fractional subdiffusion equation
\begin{equation}\label{eg1:eq1}\left\{\begin{aligned}
&{}_{0}^{C} D_{t}^{\alpha}u=\frac{1}{2\pi^2}\Delta u + f(u), && (x,t)\in \Omega \times (0,T],T>0,\\
&u(x,y,0)= \sin(\pi x)\sin(\pi y),&& (x,y)\in \bar{\Omega},\\
&u(x,y,t)=0, &&(x,y,t)\in \partial\Omega\times  [0,T],
\end{aligned}\right.\end{equation}
where $\Delta = \partial_x^2 + \partial_y^2,(x,y)\in \Omega:=(0,1)^2$, and $0< \alpha \leq 1$.
\begin{itemlist}
  \item Case I: $f=0$, the exact solution of \eqref{eg1:eq1} is
  $u =E_{\alpha}(-t^{\alpha})\sin(\pi x)\sin(\pi y)$,
  where $E_{\alpha}(z)$ is the Mittag--Leffler function defined by
  $E_{\alpha}(z)=\sum_{k=0}^{\infty}\frac{z^k}{\Gamma(k\alpha+1)}$.
  \item Case II:  $f=u(1-u^2)$, the exact solution  of \eqref{eg1:eq1} is  unknown.
\end{itemlist}
\end{example}

We choose the FBDF-2 in time discretization, i.e.,
$\omega^{(\alpha)}(z)=(3/2-2z+z^2/2)^{\alpha}$ with $p=2$,
and the bicubic element in space approximation, i.e., $r=3$ in \eqref{Xh-r}.
The space step size is taken as $h=1/128$.
%
%
%
The error at $t=t_n$   is denoted by
$$e^n=u(t_n)-u_{h}^n.$$
If the  analytical solution is unavailable, then the reference solution is obtained
from the corresponding fast method \eqref{s6-1} with one correction  term and a smaller time stepsize $\tau=2^{-17}$.
The starting values used in \eqref{FLMM-3} for $m\geq 2$ is obtained by solving \eqref{FLMM-2} with
a  step size $10^{-1}\tau /\lceil\tau^{-1} \rceil$ and $m=1$.
We only show the accuracy in time.

For Case I, the exact solution is known, we have $\delta_k=k\alpha$,  so
$\sigma_k$ used in \eqref{FLMM-3} is chosen as $\sigma_k=k\alpha$.
By \eqref{eq:zeng-7-2-2}, the temporal error at $t=t_n\gg 0$ is
$O(\tau^2)$ for $m\alpha>1.5$,
$O(\tau^2\log(n))$ for $m\alpha=1.5$, and
$O(\tau^{m\alpha +0.5})$ for $m\alpha<1.5$.
%
%

Table \ref{table-11} shows the $L^2$ errors for $\alpha=0.2,0.5$ and $0.8$ at $t=1$.
For $\alpha=0.2$, the regularity of the analytical solution is low, but the accuracy of the
numerical solutions increases significantly as the number of correction terms increases up to four,
and the observed convergence rate is better than the theoretical result  $O(\tau^{m\alpha +0.5})$.
Second-order accuracy can be obtained if we increase $m$ and use the quadruple-precision in computations,
which seems  unnecessary for numerical  practices, since  four correction terms with double precision
can achieve sufficiently accurate numerical results.
As $\alpha$ increases, the regularity of the analytical solution improves,
three/two  correction terms are enough to achieve
second-order accuracy   for $\alpha=0.5$/$0.8$,
which agrees with the theoretical analysis.
However, better convergence rate is observed than the  theoretical convergence rate $O(\tau^{m\alpha +0.5})$
for $m\alpha<1.5$.
Table   \ref{table-12}  shows the maximum $L^2$ errors for  $\alpha=0.2,0.5$ and 0.8.
We observe that the accuracy of numerical solutions increases significantly as the number of the correction
terms increases, especially for a smaller fractional order $\alpha$, though the theoretical convergence
of the maximum $L^2$ error is $O(\tau^{(m+0.5)\alpha})$.
Both Tables \ref{table-11}  and \ref{table-12} demonstrate that a few number of corrections
are enough to achieve   accurate numerical solutions,
which will be further verified in Case II for solving nonlinear problems.

\begin{table}[!ht]
\caption{The  $L^2$ error  at $t=1$ for Case I, $\sigma_k=k\alpha,\tau=2^{-J}$.}\label{table-11}
\centering\footnotesize
 \begin{tabular}{|c|c|cc|cc|cc|cc|cc|}
\hline
$\alpha$&$J$  & $m = 0$    & rate & $m = 1$    & rate & $m = 2$    & rate & $m = 3$    & rate & $m = 4$    & rate   \\\hline
    &5&  3.98e-4 &      & 2.63e-5 &      & 4.33e-6 &      & 6.93e-7 &      & 7.08e-8 &           \\
    &6&  1.98e-4 & 1.00 & 1.19e-5 & 1.14 & 1.75e-6 & 1.31 & 2.42e-7 & 1.52 & 2.52e-8 & 1.49        \\
0.2 &7&  9.91e-5 & 1.00 & 5.37e-6 & 1.15 & 7.00e-7 & 1.32 & 8.51e-8 & 1.51 & 9.45e-9 & 1.42         \\
    &8&  4.95e-5 & 1.00 & 2.39e-6 & 1.17 & 2.78e-7 & 1.33 & 3.01e-8 & 1.50 & 3.54e-9 & 1.41         \\
    &9&  2.47e-5 & 1.00 & 1.06e-6 & 1.17 & 1.10e-7 & 1.34 & 1.07e-8 & 1.49 & 1.30e-9 & 1.45        \\\hline
    &5&  1.08e-3 &      & 1.16e-5 &      & 2.50e-6 &      & 8.09e-6 &      & 7.39e-6 &          \\
    &6&  5.36e-4 & 1.01 & 4.00e-6 & 1.54 & 8.39e-7 & 1.58 & 2.31e-6 & 1.81 & 2.61e-6 & 1.50       \\
0.5 &7&  2.67e-4 & 1.00 & 1.43e-6 & 1.49 & 2.51e-7 & 1.74 & 6.34e-7 & 1.87 & 8.21e-7 & 1.67        \\
    &8&  1.34e-4 & 1.00 & 5.13e-7 & 1.47 & 7.07e-8 & 1.83 & 1.69e-7 & 1.91 & 2.40e-7 & 1.77        \\
    &9&  6.67e-5 & 1.00 & 1.85e-7 & 1.47 & 1.92e-8 & 1.88 & 4.41e-8 & 1.94 & 6.70e-8 & 1.84       \\\hline
    &5&  2.03e-3 &      & 6.96e-5 &      & 2.72e-5 &      & 3.43e-5 &      & 1.90e-5 &          \\
    &6&  1.01e-3 & 1.01 & 1.98e-5 & 1.82 & 7.05e-6 & 1.95 & 9.50e-6 & 1.85 & 5.83e-6 & 1.70       \\
0.8 &7&  5.02e-4 & 1.01 & 5.59e-6 & 1.82 & 1.80e-6 & 1.97 & 2.51e-6 & 1.92 & 1.63e-6 & 1.84        \\
    &8&  2.50e-4 & 1.00 & 1.58e-6 & 1.82 & 4.58e-7 & 1.98 & 6.48e-7 & 1.95 & 4.34e-7 & 1.91        \\
    &9&  1.25e-4 & 1.00 & 4.46e-7 & 1.82 & 1.16e-7 & 1.99 & 1.65e-7 & 1.97 & 1.12e-7 & 1.95      \\
\hline
\end{tabular}
\end{table}

\begin{table}[!ht]
\caption{The maximum $L^2$ error $\max_{1\leq n \leq T/\tau}\|e^n\|$ for Case I, $\sigma_k=k\alpha,T=1,\tau=2^{-J}$.}\label{table-12}
\centering\footnotesize
 \begin{tabular}{|c|c|cc|cc|cc|cc|cc|}
\hline
$\alpha$&$J$  & $m = 0$    & rate & $m = 1$    & rate & $m = 2$    & rate & $m = 3$    & rate & $m = 4$    & rate   \\\hline
    &5&  2.07e-2 &      & 2.75e-4 &      & 2.35e-5 &      & 4.97e-6 &      & 5.14e-7 &           \\
    &6&  1.95e-2 & 0.09 & 2.36e-4 & 0.22 & 1.83e-5 & 0.36 & 3.56e-6 & 0.48 & 3.32e-7 & 0.63        \\
0.2 &7&  1.82e-2 & 0.10 & 2.00e-4 & 0.24 & 1.40e-5 & 0.38 & 2.49e-6 & 0.51 & 2.10e-7 & 0.66         \\
    &8&  1.69e-2 & 0.11 & 1.68e-4 & 0.25 & 1.06e-5 & 0.41 & 1.71e-6 & 0.54 & 1.29e-7 & 0.70         \\
    &9&  1.56e-2 & 0.12 & 1.40e-4 & 0.27 & 7.88e-6 & 0.43 & 1.16e-6 & 0.57 & 7.81e-8 & 0.73        \\\hline
    &5&  2.29e-2 &      & 1.46e-4 &      & 2.30e-5 &      & 1.21e-5 &      & 7.39e-6 &             \\
    &6&  1.71e-2 & 0.42 & 7.64e-5 & 0.94 & 1.08e-5 & 1.10 & 4.28e-6 & 1.50 & 2.61e-6 & 1.50          \\
0.5 &7&  1.26e-2 & 0.44 & 3.89e-5 & 0.98 & 4.66e-6 & 1.21 & 1.41e-6 & 1.60 & 8.21e-7 & 1.67           \\
    &8&  9.12e-3 & 0.46 & 1.95e-5 & 1.00 & 1.90e-6 & 1.29 & 4.43e-7 & 1.67 & 2.40e-7 & 1.77           \\
    &9&  6.58e-3 & 0.47 & 9.70e-6 & 1.01 & 7.45e-7 & 1.35 & 1.33e-7 & 1.73 & 6.70e-8 & 1.84         \\\hline
    &5&  1.08e-2 &      & 1.16e-4 &      & 3.76e-5 &      & 3.43e-5 &      & 1.90e-5 &            \\
    &6&  6.43e-3 & 0.75 & 3.76e-5 & 1.62 & 1.04e-5 & 1.85 & 9.50e-6 & 1.85 & 5.83e-6 & 1.70         \\
0.8 &7&  3.77e-3 & 0.77 & 1.21e-5 & 1.63 & 2.79e-6 & 1.91 & 2.51e-6 & 1.92 & 1.63e-6 & 1.84          \\
    &8&  2.19e-3 & 0.78 & 3.93e-6 & 1.63 & 7.26e-7 & 1.94 & 6.48e-7 & 1.95 & 4.34e-7 & 1.91          \\
    &9&  1.26e-3 & 0.79 & 1.27e-6 & 1.62 & 1.86e-7 & 1.96 & 1.65e-7 & 1.97 & 1.12e-7 & 1.95         \\
\hline
\end{tabular}
\end{table}

Next, we numerically display how $\sigma_k$ influence the
accuracy of numerical solutions when
$\sigma_k \notin\{\delta_1,\delta_2,\cdots,\delta_m,\cdots\}$. In such a case,
the discretization error in time is $O(\tau^{0.5}t_n^{(\alpha-1)/2})$  by Theorem \ref{thm2-1}.
We   consider Case I and take $\alpha=0.2$ and $\sigma_k=10^{-1}(2k-1)$ in numerical simulations.
Table  \ref{table-13} displays the $L^2$ errors  at $t=1$, where  we  observe about first-order accuracy for all $0\leq m \leq 4$.
What is interesting  is that the error still decreases significantly as $m$ increases, though the
convergence rate  is almost not improved.
Similar results in Table \ref{table-14} are observed,  where the maximum $L^2$ errors are displayed.
This phenomenon was studied   in Ref. \refcite{ZengZK17}, which could be simply explained from
the fact that the time discretization \eqref{TD-1} (see also \eqref{e1.3}) is exact for $u=t^{\sigma_k}$.
Since $\sigma_k \notin\{\delta_1,\delta_2,\delta_3,\cdots\}$,
the   leading  term of the time discretization error, which we denote as
 $R_{0}^n(\delta_1)
=R_{0}^n(\sigma_1,\sigma_2,\cdots,\sigma_m,\delta_1)$, depends on
$t^{\delta_1}$ (or  $\delta_1$).
From  \eqref{cond-2}, one knows that $R_0^n(\delta)=0$
for all $\delta\in\{\sigma_1,\sigma_2,\cdots,\sigma_m\}$  and
$R_0^n(\delta)$ is an  analytical function with respect to $\delta$.  Hence,
it is reasonable to believe that
$|R_0^n(\delta)|$ contains the factor $S_m(\delta)=\prod_{k=1}^m |\delta-\sigma_k|$
and $S_m(\delta)$ may be small.
For Case I, $\alpha=0.2$ and $\sigma_k=10^{-1}(2k-1)$,  one has $\delta_1=0.2$,
$S_1(\delta_1)=10^{-1},S_2(\delta_1)=10^{-2},S_3(\delta_1)=3\times 10^{-3}$, and $S_4(\delta_1)=1.5\times 10^{-3}$.
From Tables \ref{table-13}--\ref{table-14}, we indeed observe that
the accuracy increases   as  $S_m(\delta_1)$  decreases.
Even if the regularity of the analytical solution is unknown,
adding suitable correction terms may help improve the accuracy of the numerical solutions; see
also  related results in Tables \ref{tbl-21}--\ref{tbl-22}
 and Tables 3 and 10, and Fig. 2.2. of Ref. \refcite{ZengZK17}.

\begin{table}[!ht]
\caption{The $L^2$ error $\|e^n\|$ at $t=1$ for Case I, $\alpha=0.2,\sigma_k=(2k-1)/10$.}\label{table-13}
\centering\footnotesize
 \begin{tabular}{|c|cc|cc|cc|cc|cc|}
\hline
$1/\tau$  & $m = 0$    & rate & $m = 1$    & rate & $m = 2$    & rate & $m = 3$    & rate & $m = 4$    & rate   \\\hline
32  &  3.98e-4 &      & 1.97e-5 &      & 8.75e-6 &      & 2.81e-6 &      & 9.63e-7 &           \\
64  &  1.98e-4 & 1.00 & 1.12e-5 & 0.82 & 4.37e-6 & 1.00 & 1.32e-6 & 1.09 & 4.68e-7 & 1.04       \\
128 &  9.91e-5 & 1.00 & 6.19e-6 & 0.85 & 2.17e-6 & 1.01 & 6.26e-7 & 1.08 & 2.27e-7 & 1.05        \\
256 &  4.95e-5 & 1.00 & 3.37e-6 & 0.88 & 1.07e-6 & 1.02 & 2.97e-7 & 1.08 & 1.09e-7 & 1.06        \\
512 &  2.47e-5 & 1.00 & 1.80e-6 & 0.90 & 5.23e-7 & 1.03 & 1.41e-7 & 1.07 & 5.17e-8 & 1.07      \\\hline
\end{tabular}
\end{table}

\begin{table}[!ht]
\caption{The maximum $L^2$ error $\max_{1\leq n \leq T/\tau}\|e^n\|$ for Case I, $\alpha=0.2,\sigma_k=(2k-1)/10$.}\label{table-14}
\centering\footnotesize
 \begin{tabular}{|c|cc|cc|cc|cc|cc|}
\hline
$1/\tau$  & $m = 0$    & rate & $m = 1$    & rate & $m = 2$    & rate & $m = 3$    & rate & $m = 4$    & rate   \\\hline
32  &  2.07e-2 &      & 1.88e-4 &       & 5.82e-5 &      & 1.89e-5 &      & 4.60e-6 &           \\
64  &  1.95e-2 & 0.09 & 2.04e-4 & -0.12 & 5.57e-5 & 0.06 & 1.74e-5 & 0.12 & 4.10e-6 & 0.17      \\
128 &  1.82e-2 & 0.10 & 2.15e-4 & -0.07 & 5.25e-5 & 0.09 & 1.57e-5 & 0.14 & 3.61e-6 & 0.18       \\
256 &  1.69e-2 & 0.11 & 2.20e-4 & -0.04 & 4.88e-5 & 0.10 & 1.41e-5 & 0.16 & 3.17e-6 & 0.19       \\
512 &  1.56e-2 & 0.12 & 2.21e-4 & -0.01 & 4.51e-5 & 0.12 & 1.25e-5 & 0.17 & 2.77e-6 & 0.20     \\\hline
\end{tabular}
\end{table}

For Case II,  we know $\delta_1=\alpha$, but we do not exactly know  $\delta_k$   for $k\ge2$.
Based on the criteria on selecting $\sigma_k$ (see lines below \eqref{eq:eq-w}), we take $\sigma_k=k\alpha$  in numerical simulations.

Take $\alpha=0.2,$
the $L^2$ errors at $t=1$ and the maximum $L^2$ errors are
displayed in Tables  \ref{tbl-21} and \ref{tbl-22},  respectively.
We can see that the accuracy is improved significantly as $m$ increases,
though the regularity of the solution is unknown.

\begin{table}[!ht]
\caption{The $L^2$ error $\|e^n\|$ at $t=1$ for Case II,   $\alpha=0.2,\sigma_k=k\alpha,k\le 4$.}\label{tbl-21}
\centering\footnotesize
 \begin{tabular}{|c|cc|cc|cc|cc|cc|}
\hline
$1/\tau$  & $m = 0$    & rate & $m = 1$    & rate & $m = 2$    & rate & $m = 3$    & rate & $m = 4$    & rate   \\\hline
32   & 2.01e-4 &      & 1.48e-5 &      & 3.20e-6 &      & 7.56e-7 &      & 1.69e-7 &      \\
64   & 1.00e-4 & 1.00 & 6.93e-6 & 1.10 & 1.39e-6 & 1.21 & 3.02e-7 & 1.33 & 7.35e-8 & 1.20      \\
128  & 5.00e-5 & 1.00 & 3.20e-6 & 1.11 & 5.94e-7 & 1.22 & 1.21e-7 & 1.31 & 3.23e-8 & 1.19       \\
256  & 2.50e-5 & 1.00 & 1.47e-6 & 1.12 & 2.53e-7 & 1.23 & 4.90e-8 & 1.31 & 1.39e-8 & 1.22       \\
512  & 1.25e-5 & 1.00 & 6.70e-7 & 1.13 & 1.07e-7 & 1.24 & 1.96e-8 & 1.32 & 5.66e-9 & 1.30    \\
\hline
\end{tabular}
\end{table}

\begin{table}[!ht]
\caption{The maximum $L^2$ error  $\max_{1\leq n \leq T/\tau}\|e^n\|$ for Case II,  $\alpha=0.2,\sigma_k=k\alpha,k\le 4,T=1$.}\label{tbl-22}
\centering\footnotesize
 \begin{tabular}{|c|cc|cc|cc|cc|cc|}
\hline
$1/\tau$  & $m = 0$    & rate & $m = 1$    & rate & $m = 2$    & rate & $m = 3$    & rate & $m = 4$    & rate   \\\hline
32   & 9.58e-3 &      & 1.43e-4 &      & 1.77e-5 &      & 5.52e-6 &      & 1.09e-6 &       \\
64   & 9.01e-3 & 0.09 & 1.26e-4 & 0.18 & 1.50e-5 & 0.24 & 4.61e-6 & 0.26 & 8.94e-7 & 0.28       \\
128  & 8.44e-3 & 0.10 & 1.11e-4 & 0.19 & 1.26e-5 & 0.24 & 3.85e-6 & 0.26 & 7.33e-7 & 0.29        \\
256  & 7.86e-3 & 0.10 & 9.69e-5 & 0.20 & 1.06e-5 & 0.25 & 3.21e-6 & 0.26 & 5.90e-7 & 0.31        \\
512  & 7.29e-3 & 0.11 & 8.42e-5 & 0.20 & 8.93e-6 & 0.25 & 2.64e-6 & 0.28 & 4.53e-7 & 0.38     \\
\hline
\end{tabular}
\end{table}

Table \ref{tbl-41} displays the $L^2$ errors at $t=1$ for $\alpha=0.8$, the accuracy increases as $m$ increases up to two,
about second-order accuracy is observed when $m=2$.
For $m\geq 3$,  the accuracy decreases as $m$ increases, which could be explained from \eqref{eq:zeng-7-2},
where the error  $\mathcal{E}^n$ induced by the starting values
dominates  the overall accuracy  and  increases as $m$ increases when $m\geq 2$; see Remark \ref{remark-2}.
Direct computation
shows $\ell_n^{(\sigma_1)}=\ell_n^{(\sigma_2)}=n^{-0.2}$,
$\ell_n^{(\sigma_3)}=1$, and $\ell_n^{(\sigma_4)}=n^{1.6}$. The negative effect caused by
$\ell_n^{(\sigma_m)}$ for $m=3,4$ is observed in Table \ref{tbl-41}.


We also take $\alpha=0.8$, but select $\sigma_k \in \{\sigma_{\ell,j}|\sigma_{\ell,j}=\ell+j\alpha,\ell\in Z^+,j\in Z^+\}$
in numerical simulations, i.e., $\sigma_1=0.8,\sigma_2=1,\sigma_3=1.6$ and $\sigma_4=1.8$.
From Table \ref{tbl-42}, we can see that second-order accuracy is observed for $m\geq 3$.
Although we cannot claim that the solution contains $t$, $t^{1.6}$, and $t^{1.8}$, what we
observe is that the selected $\sigma_k$  can
help  to improve the accuracy of numerical solutions.

We find that the analytical solution possibly contains the term $t^{2\alpha}$ (see $m=2$ in Table \ref{tbl-41} and
$m=3$ in Table \ref{tbl-42}),
since the accuracy is improved significantly when $t^{2\alpha}$
is exactly calculated in the numerical method.

\begin{table}[!ht]
\caption{The $L^2$ error $\|e^n\|$ at $t=1$ for Case II,   $\alpha=0.8,\sigma_k=k\alpha$.}\label{tbl-41}
\centering\footnotesize
 \begin{tabular}{|c|cc|cc|cc|cc|cc|}
\hline
$1/\tau$  & $m = 0$    & rate & $m = 1$    & rate & $m = 2$    & rate & $m = 3$    & rate & $m = 4$    & rate   \\\hline
32   & 1.02e-3 &      & 5.60e-5 &      & 1.75e-5 &      & 1.71e-4 &      & 7.36e-4 &      \\
64   & 5.07e-4 & 1.01 & 1.64e-5 & 1.77 & 4.74e-6 & 1.88 & 5.81e-5 & 1.56 & 3.27e-4 & 1.17  \\
128  & 2.52e-4 & 1.01 & 4.74e-6 & 1.79 & 1.26e-6 & 1.91 & 1.76e-5 & 1.72 & 1.18e-4 & 1.47  \\
256  & 1.26e-4 & 1.00 & 1.36e-6 & 1.80 & 3.32e-7 & 1.92 & 4.94e-6 & 1.83 & 3.69e-5 & 1.67  \\
512  & 6.29e-5 & 1.00 & 3.90e-7 & 1.80 & 8.90e-8 & 1.90 & 1.32e-6 & 1.90 & 1.06e-5 & 1.80
   \\\hline
\end{tabular}
\end{table}

\begin{table}[!ht]
\caption{The $L^2$ error $\|e^n\|$ at $t=1$ for Case II,   $\alpha=0.8,\sigma_1=0.8,
\sigma_2=1,\sigma_3=1.6,\sigma_4=1.8$.}\label{tbl-42}
\centering\footnotesize
 \begin{tabular}{|c|cc|cc|cc|cc|cc|}
\hline
$1/\tau$  & $m = 0$    & rate & $m = 1$    & rate & $m = 2$    & rate & $m = 3$    & rate & $m = 4$    & rate   \\\hline
32   & 1.02e-3 &      & 5.60e-5 &      & 1.59e-5 &      & 1.50e-5 &      & 2.29e-5 &       \\
64   & 5.07e-4 & 1.01 & 1.64e-5 & 1.77 & 5.45e-6 & 1.54 & 4.48e-6 & 1.74 & 6.54e-6 & 1.81       \\
128  & 2.52e-4 & 1.01 & 4.73e-6 & 1.79 & 1.74e-6 & 1.65 & 1.24e-6 & 1.85 & 1.46e-6 & 2.16        \\
256  & 1.26e-4 & 1.00 & 1.35e-6 & 1.81 & 5.28e-7 & 1.72 & 3.27e-7 & 1.92 & 2.82e-7 & 2.37         \\
512  & 6.29e-5 & 1.00 & 3.83e-7 & 1.82 & 1.54e-7 & 1.78 & 8.36e-8 & 1.97 & 5.53e-8 & 2.35
   \\\hline
\end{tabular}
\end{table}

Finally, we display  the numerical solutions for Case II   at $t=1,5,10,20$,
see Figure \ref{fig2}.  For the selected computational domain and initial data, we observe that
the solution decays as time $t$ evolves and it decays faster as the fractional order $\alpha$ increases.

\begin{figure}[htbp]
\centering
\subfigure{
\begin{minipage}[t]{0.25\linewidth}
\centering
\includegraphics[width=1.2in]{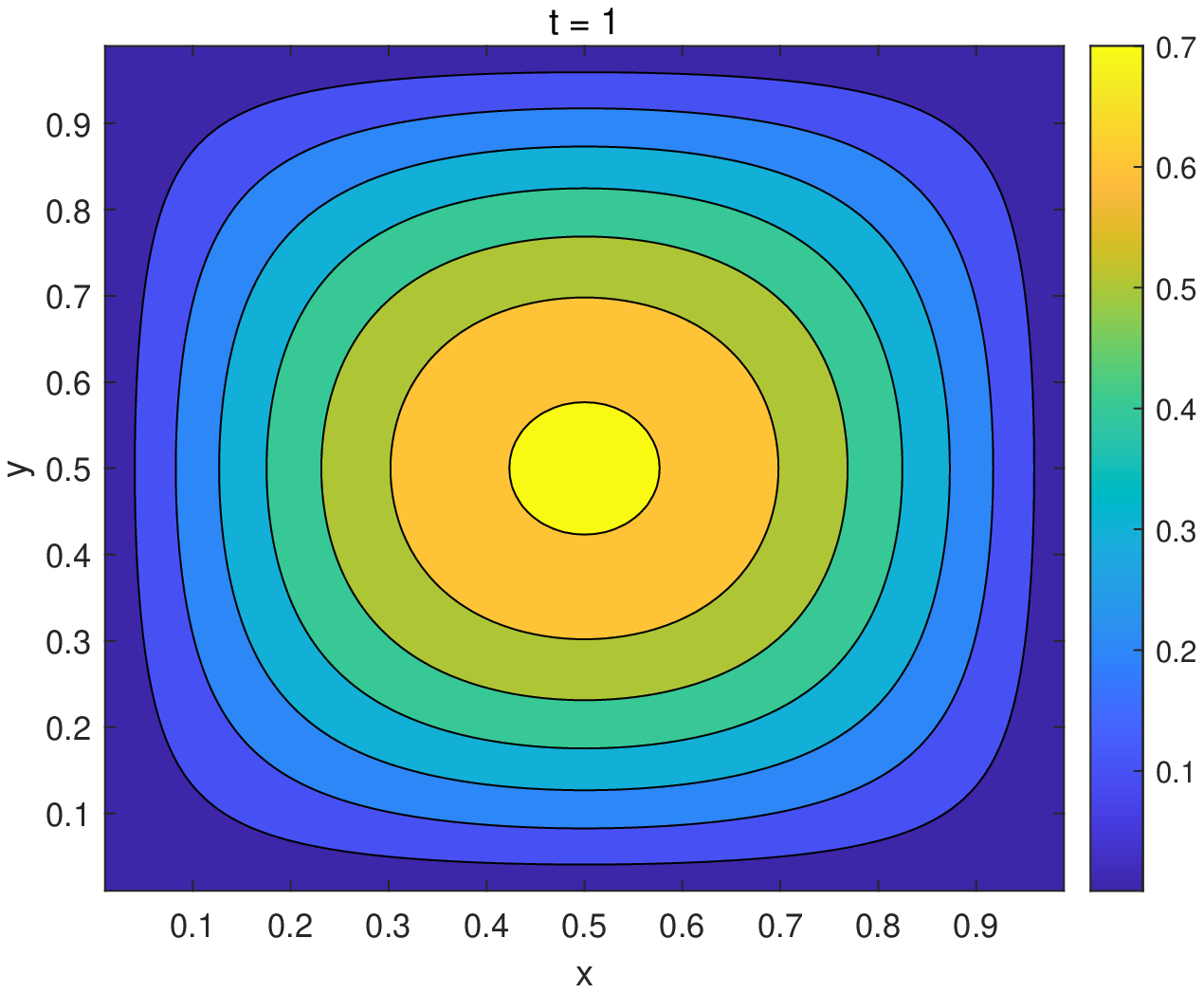}
\includegraphics[width=1.2in]{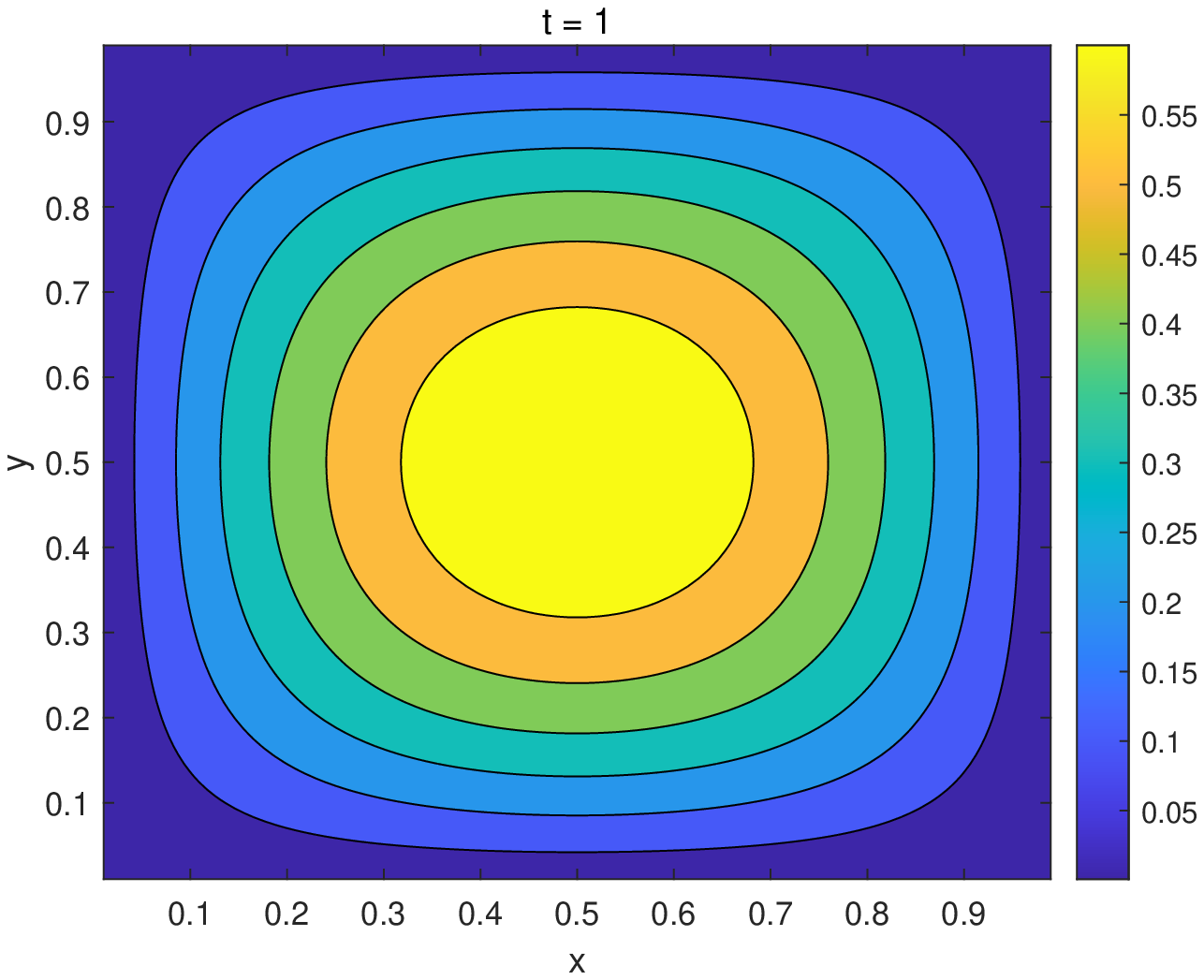}
\includegraphics[width=1.2in]{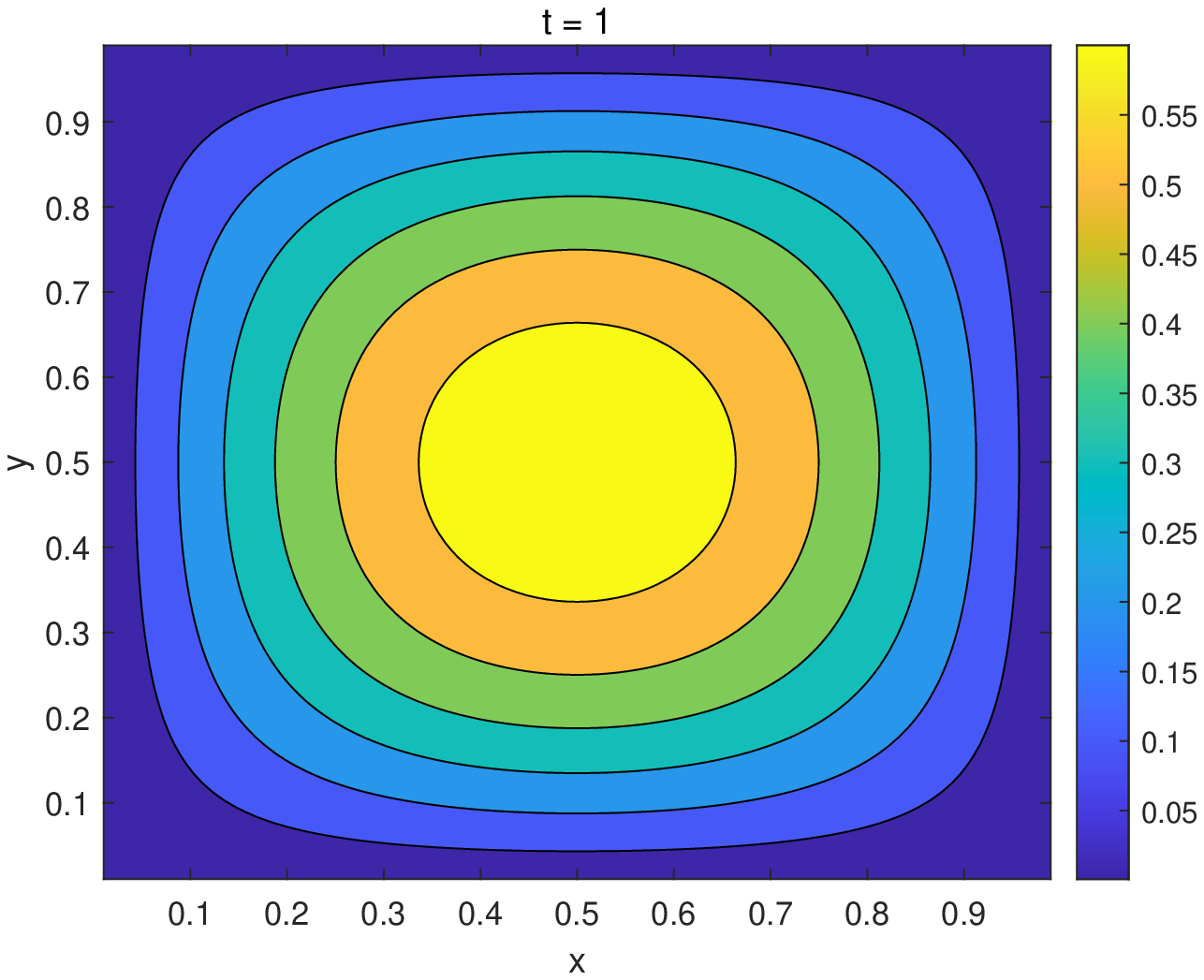}
\end{minipage}%
}%
\subfigure{
\begin{minipage}[t]{0.25\linewidth}
\centering
\includegraphics[width=1.2in]{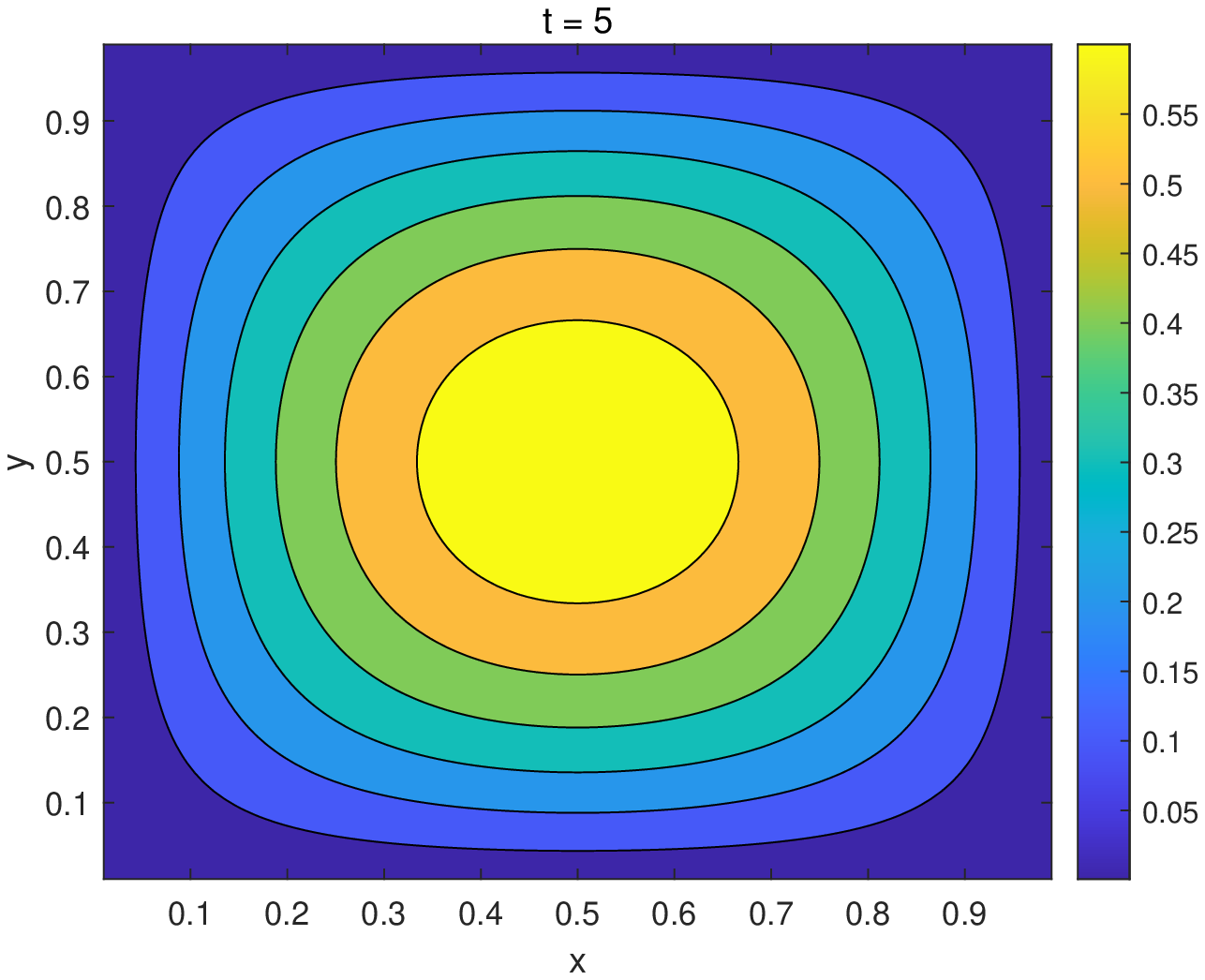}
\includegraphics[width=1.2in]{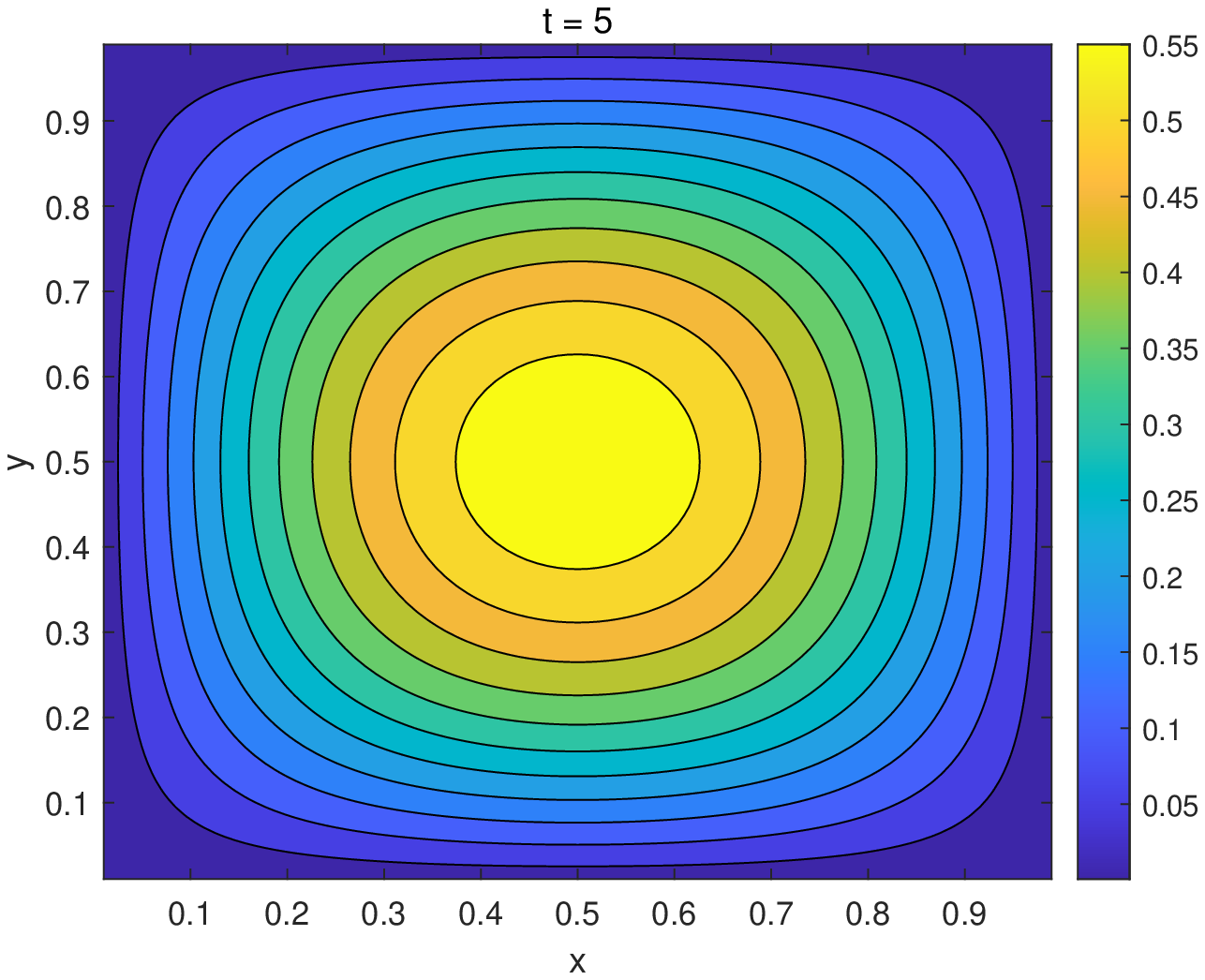}
\includegraphics[width=1.2in]{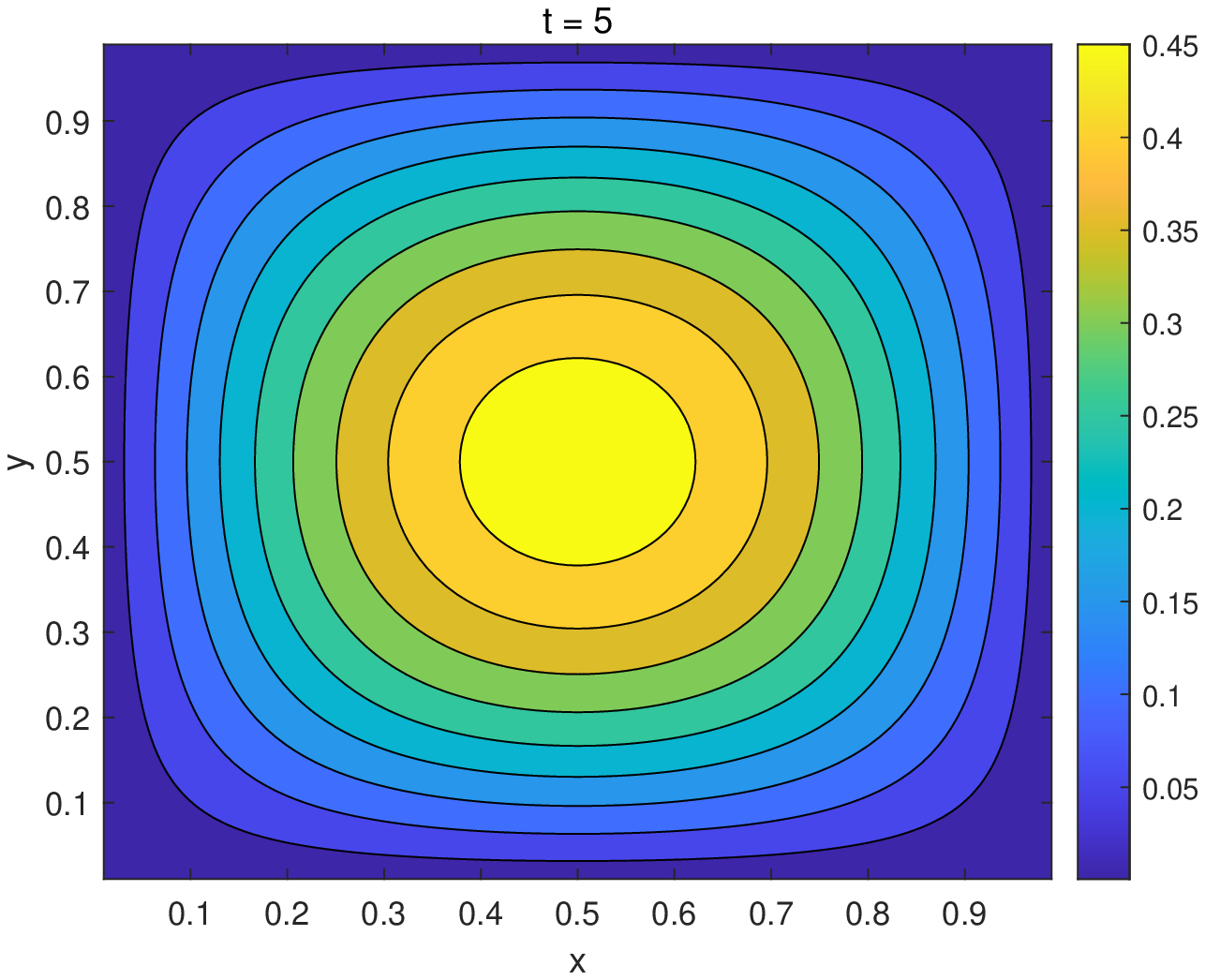}
\end{minipage}%
}%
\subfigure{
\begin{minipage}[t]{0.25\linewidth}
\centering
\includegraphics[width=1.2in]{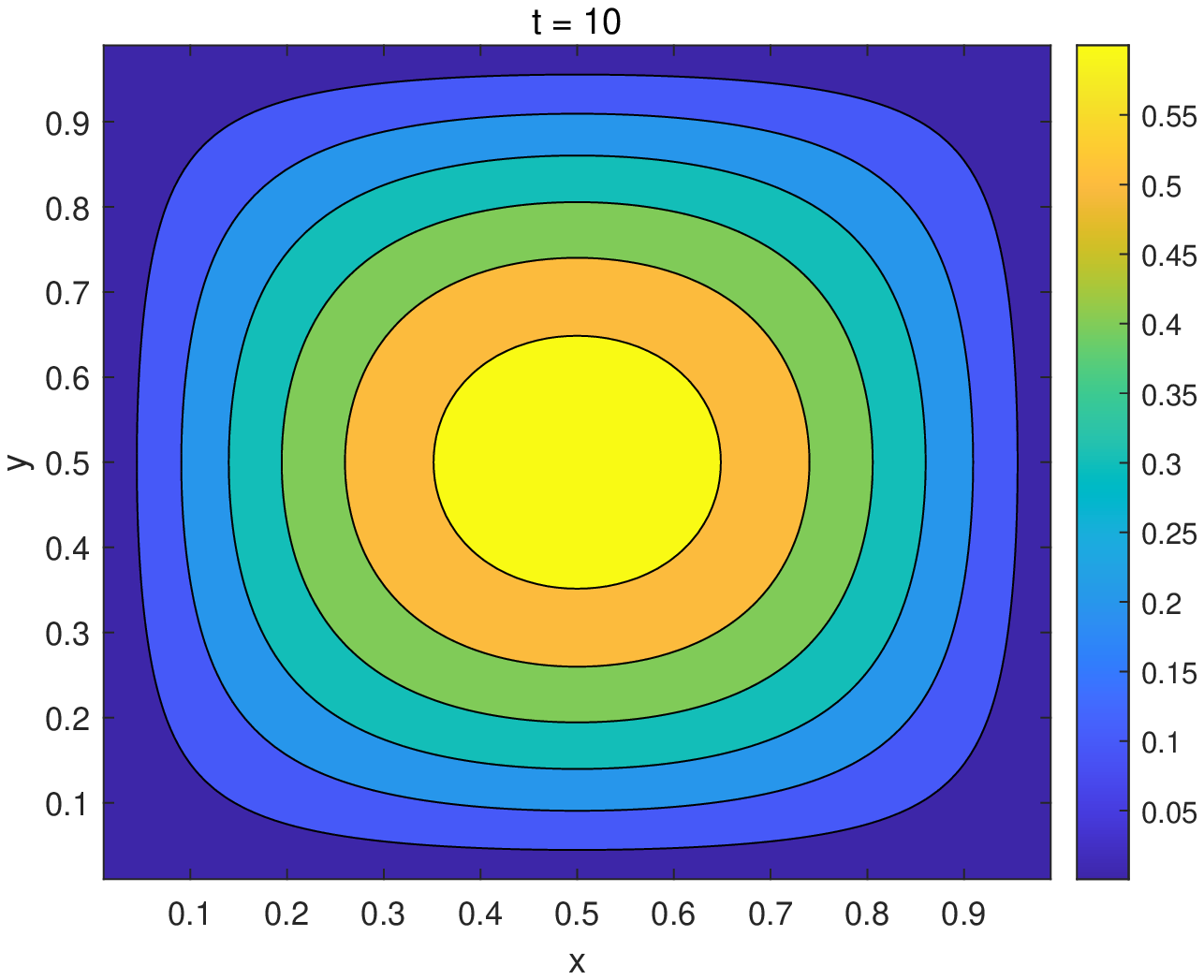}
\includegraphics[width=1.2in]{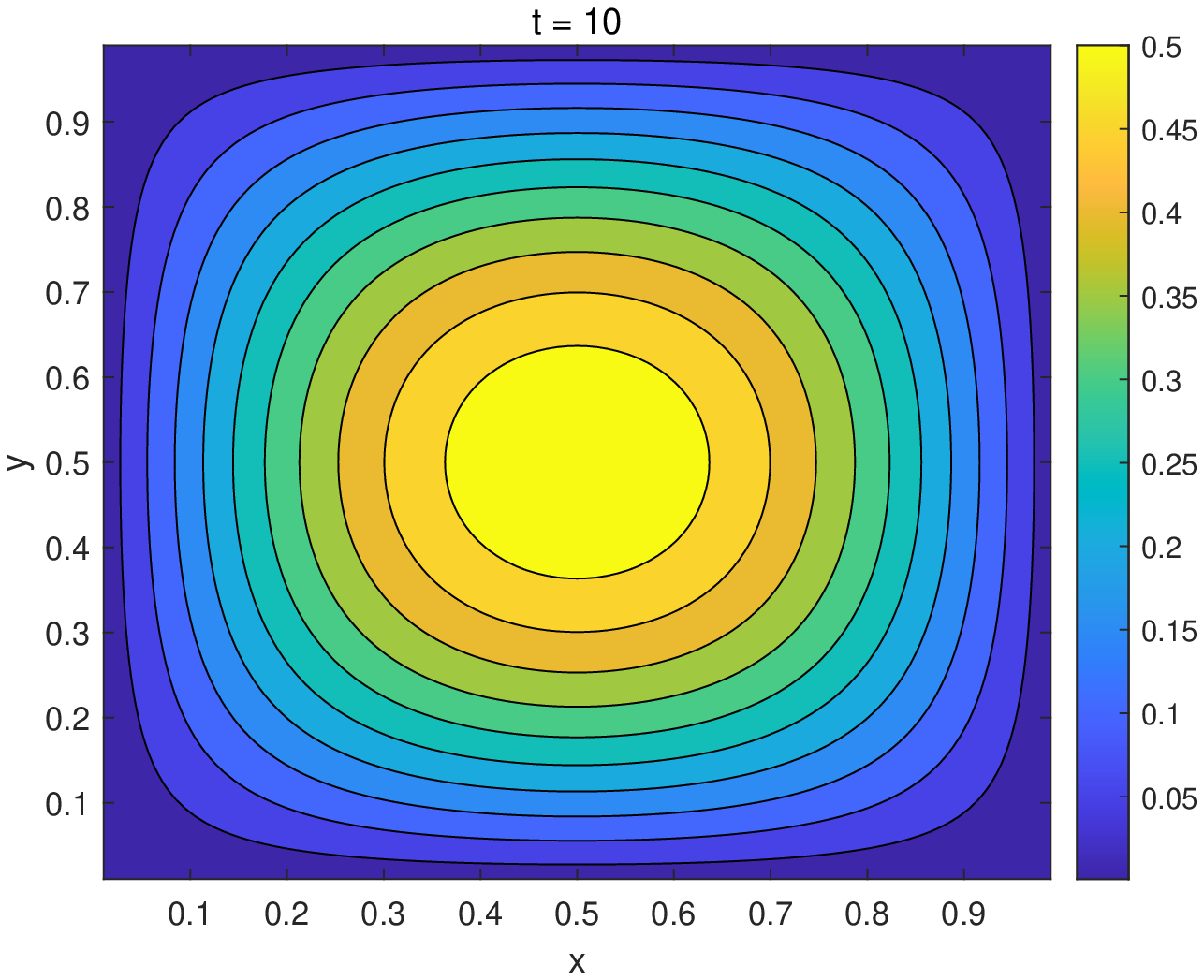}
\includegraphics[width=1.2in]{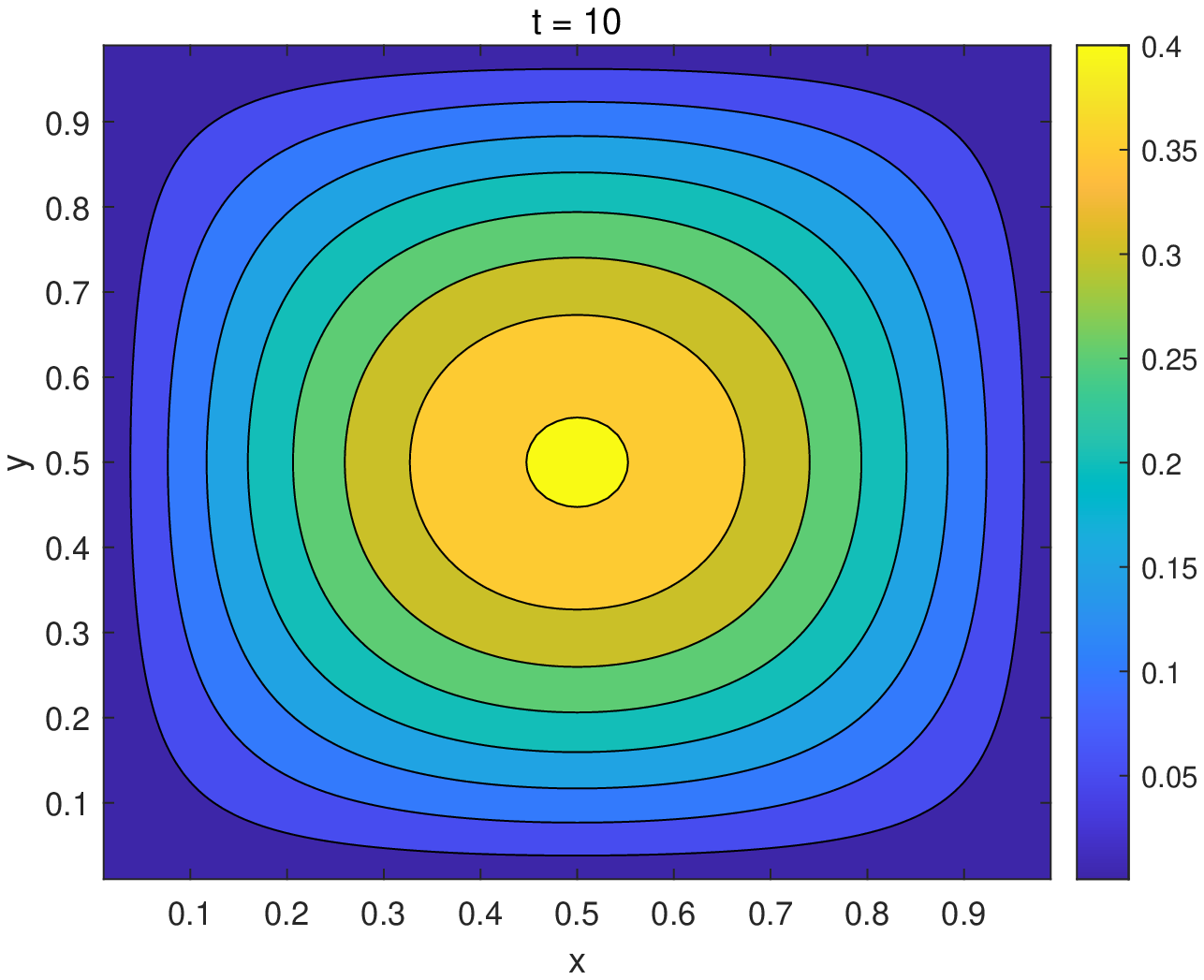}
\end{minipage}
}%
\subfigure{
\begin{minipage}[t]{0.25\linewidth}
\centering
\includegraphics[width=1.2in]{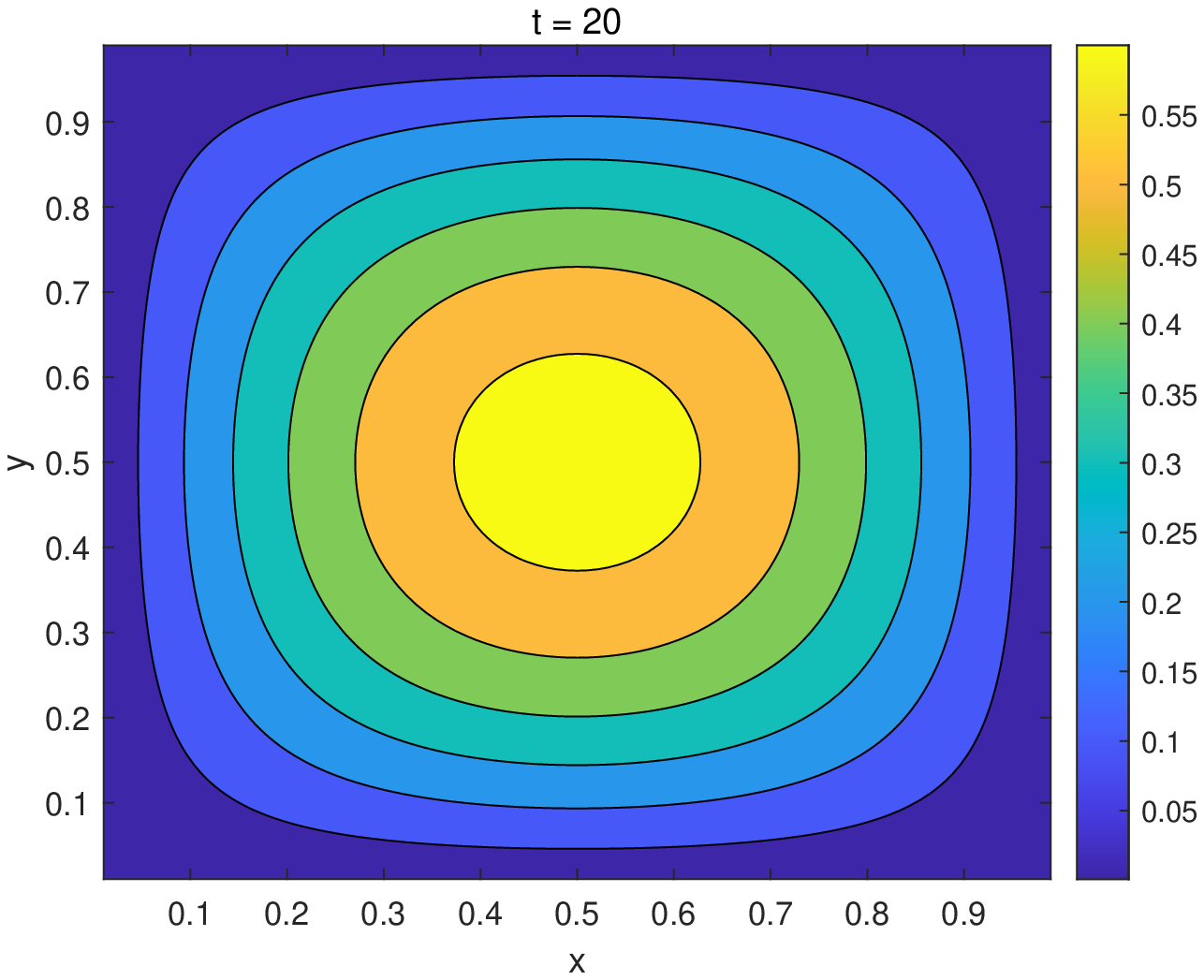}
\includegraphics[width=1.2in]{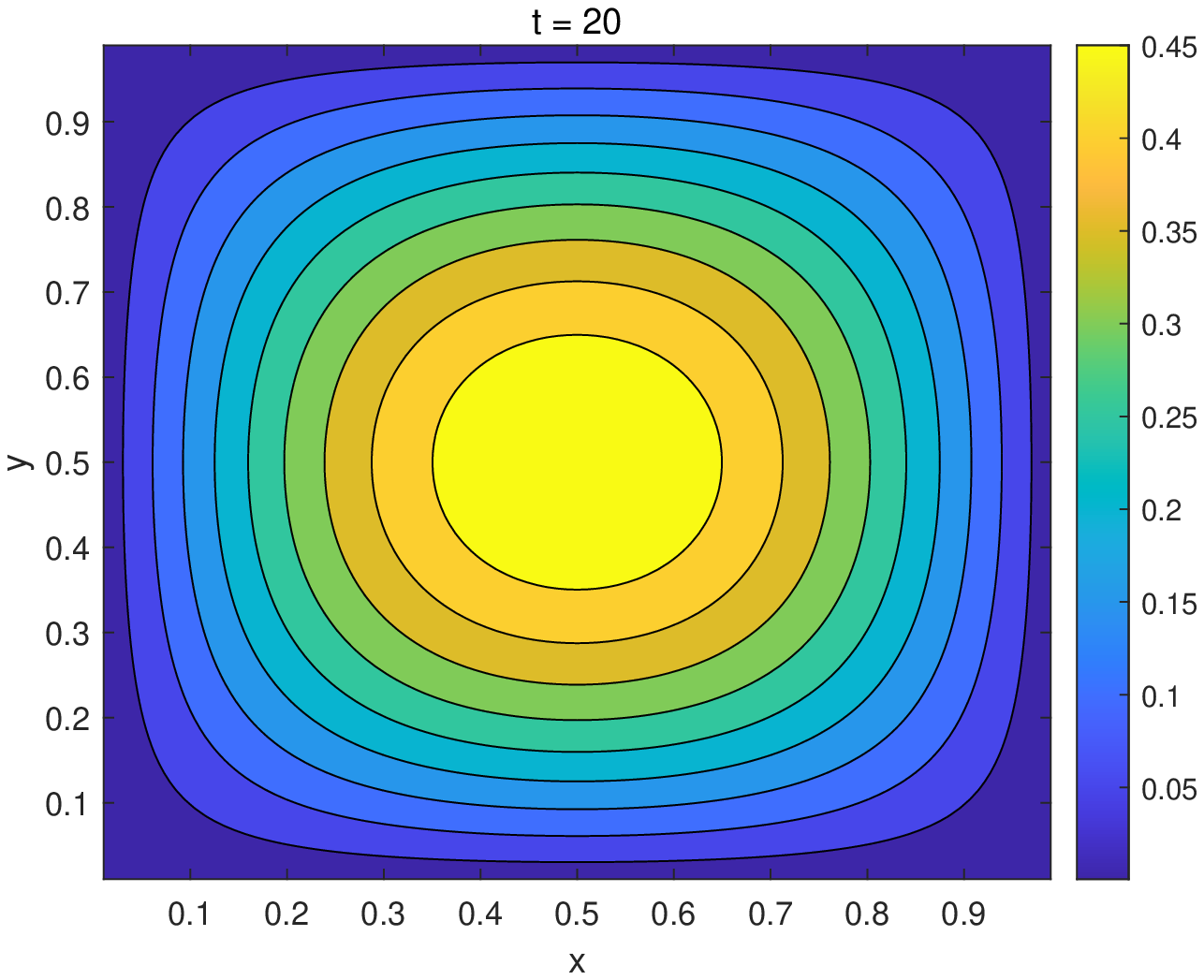}
\includegraphics[width=1.2in]{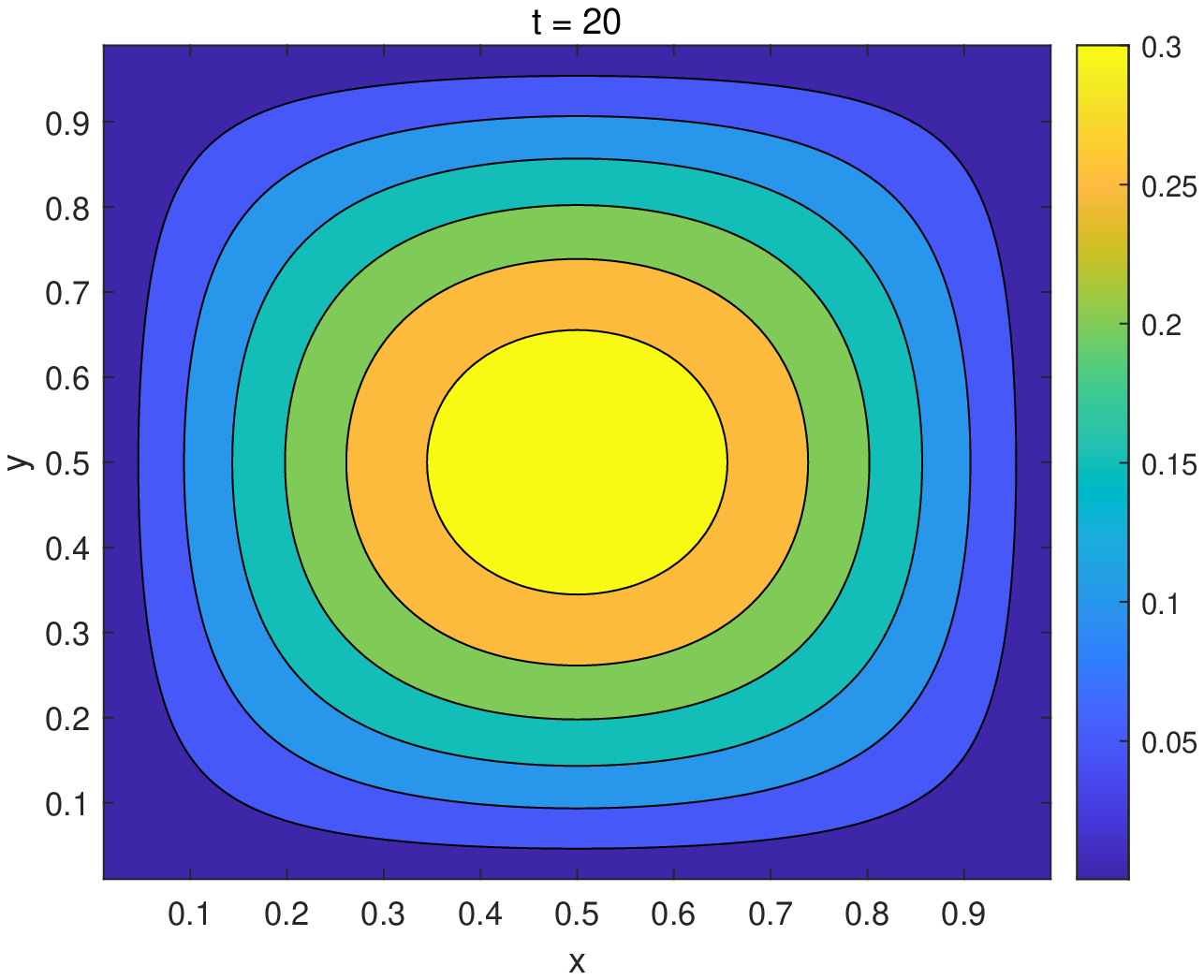}
\end{minipage}
}%
\centering
\caption{Numerical solutions for Case II, $\tau=0.01,h=1/32,m=1$,
 $\alpha=0.2,0.5,0.8$ (from top to bottom),  and $t=1,5,10,20$ (from left to right).  \label{fig2}}
\end{figure}

\section{Conclusion and discussion} \label{sec7}
In this paper, we show how to apply the generalized discrete Gr\"{o}nwall's inequality to
prove the convergence of a class of fully implicit time-stepping Galerkin FEM
for the one-dimensional  nonlinear subdiffusion equations. The
correction terms are used to deal with the initial singularity  of the solution.
The convergence analysis for this kind of time-stepping schemes
is limited, hence this work provides a simple approach to the convergence
of the time-stepping schemes with correction terms.
We also show a simple way to prove the convergence of  the fast time-stepping
Galerkin FEM based on the convergence of the direct time-stepping schemes.
It is hopeful that the methodology used in the convergence analysis of the present fast
method can be extended to simplify the  convergence analysis in Refs. \refcite{JiangZZZ17} and \refcite{ZhuXu2019}.

If the nonlinear term $f(u^n)$  is approximated by the first-order
extrapolation $f(u^{n-1})$ or second-order   extrapolation $2f(u^{n-1})-f(u^{n-2})$,
then we obtain the  semi-implicit time-stepping FEMs, the convergence of which
can be obtained directly.
The convergence analysis in this paper is very simple, so hopefully it can be
extended to analyze the convergence of numerical methods
for the complicated time-fractional evolution equations.

The observed convergence rate is better than that from the theoretical analysis when $\sigma_{m+1}< p+\alpha-1/2$.
Other techniques are needed in convergence analysis, which will be studied in our future work.

\appendix

\section{Proof of $c_n\geq 0$ for the  BN-$\theta$ method}\label{sec-C}
The  BN-$\theta$ method reduces to the FBDF-2 method for $\theta=0$ and
 to the GNGF-2 for $\theta = 1/2$. In this section,
we prove $c_n\geq 0$ for the  BN-$\theta$ method when $0\leq \theta \leq 1/2$.

Firstly, we give the proof of the following Lemma.\\
\textbf{Lemma A.1}
For $a^{(-\alpha)}_n=\frac{\Gamma(n+\alpha)}{\Gamma(\alpha)\Gamma(n+1)}$
and $0< \alpha < 1$, we have
\begin{eqnarray}
 \left(\frac{1+\alpha}{2}\right)^j a_{n-j}^{(-\alpha)}
&\leq& a_{n}^{(-\alpha)},\quad 0\leq j \leq n-1,  \label{app-C-2} \\
a_{n}^{(-\alpha-1)}-a_{n}^{(-\alpha)}
&\leq& (1+\alpha)\left(\frac{2+\alpha}{2}\right)^{n-2},\quad n\geq 0.\label{app-C-3}
\end{eqnarray}
\begin{proof}
From
$a^{(-\alpha)}_n=\frac{\Gamma(n+\alpha)}{\Gamma(\alpha)\Gamma(n+1)}$, we obtain
$a_{n}^{(-\alpha-1)}-a_{n}^{(-\alpha)}  =\frac{n}{\alpha}a_{n}^{(-\alpha)}$ and
\begin{eqnarray}
\frac{a^{(-\alpha)}_{n+1}}{a^{(-\alpha)}_n}&=&\frac{n+\alpha}{n+1}\geq \frac{1+\alpha}{2},
\quad n \geq 1,\label{appendix-C-1-2} \\
\frac{a_{n+1}^{(-\alpha-1)}-a_{n+1}^{(-\alpha)}}{a_{n}^{(-\alpha-1)}-a_{n}^{(-\alpha)}}
&=&\frac{n+\alpha}{n}\leq \frac{2+\alpha}{2},\quad n\geq 2.\label{appendix-C-1-3}
\end{eqnarray}
Eq. \eqref{appendix-C-1-2} implies
$ \frac{a^{(-\alpha)}_{n}}{a^{(-\alpha)}_{n-j}}
=\prod_{k=n-j}^{n-1}\frac{a^{(-\alpha)}_{k+1}}{a^{(-\alpha)}_k}
\geq \left(\frac{1+\alpha}{2}\right)^j$,
which completes the proof of \eqref{app-C-2}.
Obviously, \eqref{app-C-3} holds for $n=0,1$. From   \eqref{appendix-C-1-3},
we   obtain
$a_{n}^{(-\alpha-1)}-a_{n}^{(-\alpha)}
\leq \left(\frac{2+\alpha}{2}\right)^{n-2}(a_{2}^{(-\alpha-1)}-a_{2}^{(-\alpha)})
=(\alpha+1)\left(\frac{2+\alpha}{2}\right)^{n-2}$.
The proof complete.
\end{proof}

For $ 0\leq \theta \leq 1/2$, we have the following properties
\begin{eqnarray}
f_1(\theta)&=& \frac{\theta}{1+\theta\alpha}+ \frac{1-2\theta}{3-2\theta}
\leq f_1(({2+2\alpha})^{-1})= \frac{1+\alpha}{2+3\alpha},\label{APX-C-2} \\
f_2(\theta)&=&3\theta^2\left(\frac{1-2\theta}{3-2\theta}\right) +
\left(\frac{\theta}{1+\theta}\right)^3
+\left(\frac{1-2\theta}{3-2\theta}\right)^3 <\frac{8}{100},\label{APX-C-5}\\
f_3(\theta)&=&4\theta \left(\frac{1-2\theta}{3-2\theta}\right)^2+
\left(\frac{\theta}{1+\theta}\right)^3
+\left(\frac{1-2\theta}{3-2\theta}\right)^3<\frac{8}{100},\label{APX-C-6}\\
f_4(\theta)&=&{\theta} \frac{1-2\theta}{3-2\theta}+\frac{3}{8}\bigg[\frac{\theta^2}{(1+\theta)^2}
+\left(\frac{1-2\theta}{3-2\theta}\right)^2\bigg]<\frac{9}{100},\label{APX-C-7}\\
\rho_1&=&\frac{1-2\theta}{3-2\theta}\leq \frac{1}{3},\qquad
\rho_2=\frac{\alpha\theta}{1+\alpha\theta}\leq  \frac{\alpha}{2+\alpha},\label{APX-C-4}
\end{eqnarray}
where   we used
\begin{equation*}\begin{aligned}
&\max_{0\leq \theta \leq 1/2} f_1(\theta)= f_1(({2+2\alpha})^{-1}) ,\qquad\quad
\max_{0\leq \theta \leq 1/2} f_2(\theta)\approx f_2(0.3769)\approx 0.0685, \\
&\max_{0\leq \theta \leq 1/2} f_3(\theta)\approx f_3(0.1681)\approx 0.0602, \quad
\max_{0\leq \theta \leq 1/2} f_4(\theta)\approx f_4(0.2811)\approx 0.0806.
\end{aligned} \end{equation*}

\begin{proof}
From \eqref{eq:zeng-d} and \eqref{bntheta-b}, we have
$c_n= 2b_0{a}^{(-\alpha)}_n -\sum_{j=0}^{n}{b}_j{a}^{(-\alpha)}_{n-j},$
where $b_n$ is given by \eqref{b-n}.

\textbf{Step 1)} Prove $c_n\geq 0$ for $n\geq 3$. Let
\begin{equation}\label{rho}
\rho=\max\{\rho_1,\rho_2\},\qquad \rho_3= \rho({2+\alpha})/{2},\qquad
\lambda = ({1+\alpha})/{2}.
\end{equation}
By \eqref{b-n}, \eqref{rho},   \eqref{app-C-3}, and
$\sum_{j=1}^{n-1} a_{j}^{(-\alpha)}=a_{n}^{(-\alpha-1)}-1-a_n^{(-\alpha)}$,   we have
\begin{align}
b_n/b_0&=\rho_1\rho_2\sum_{j=1}^{n-1} a_{j}^{(-\alpha)} \rho_1^{j-1}\rho_2^{n-j-1}
+ \rho_2^n + \rho^n_1a_n^{(-\alpha)} \notag\\
&\leq  \rho_1\rho_2\rho^{n-2}\sum_{j=1}^{n-1} a_{j}^{(-\alpha)}+ \rho_2^n +  \rho^n_1 a_n^{(-\alpha)}\notag\\
&\leq  (1+\alpha) \rho_1\rho_2\rho_3^{n-2}  + \rho_2^n+\rho^n_1a_n^{(-\alpha)}.\label{appendix-C-3}
\end{align}
From  \eqref{app-C-2}, we have
$\lambda^{1-n}a_n^{(-\alpha)}/\alpha \geq1$ for $n\geq 1$.
Hence,
\begin{equation}\label{appendix-C-7} \begin{aligned}
b_n/b_0\leq&   \left[(1+\alpha) \rho_1\rho_2\rho_3^{n-2}+\rho_2^n\right]
\lambda^{1-n} {a_n^{(-\alpha)}}/{\alpha}
+\rho^n_1 a_n^{(-\alpha)}\\
=&\left[\frac{1+\alpha}{\alpha\lambda} \rho_1\rho_2  \left(\frac{\rho_3}{\lambda}\right)^{n-2}
+\frac{\lambda}{\alpha}\left(\frac{\rho_2}{\lambda}\right)^n+\rho^n_1\right]a_n^{(-\alpha)}\\
\leq&\bigg[\frac{1+\alpha}{\alpha\lambda} \rho_1\rho_2  \left(\frac{\rho_3}{\lambda}\right)
+\frac{\lambda}{\alpha}\left(\frac{\rho_2}{\lambda}\right)^3
+\rho^3_1   \bigg]a_n^{(-\alpha)},
\quad n\geq 3,
\end{aligned} \end{equation}
where we used  $\rho_1\leq 1/3$, $\rho_2/\lambda<1$,
and $\rho_3/\lambda<1$. Direct calculation yields
\begin{equation}\label{appendix-C-9-0}
\frac{1+\alpha}{\alpha} \frac{\rho_1\rho_2\rho_3}{\lambda^2}
=\left\{\begin{aligned}
& \frac{2\alpha(2+\alpha)}{ 1+\alpha} \frac{1-2\theta}{3-2\theta}\frac{\theta^2}{(1+\alpha\theta)^2}
\leq{3\theta^2} \frac{1-2\theta}{3-2\theta}  ,
& \rho_1\leq \rho_2 ,\\
&\frac{2(2+\alpha)}{1+\alpha} \left( \frac{1-2\theta}{3-2\theta}\right)^2 \frac{ \theta}{1+\alpha\theta}
\leq 4\theta \left(\frac{1-2\theta}{3-2\theta}\right)^2,
&\rho_1> \rho_2,
\end{aligned} \right.\end{equation}
\begin{equation}\label{appendix-C-9}
\frac{1}{\alpha} \frac{\rho_2^3}{\lambda^2} +\rho^3_1
=\frac{4\alpha^2}{ (1+\alpha)^2}\frac{\theta^3}{(1+\theta\alpha)^3}
+\left(\frac{1-2\theta}{3-2\theta}\right)^3
\leq  \left(\frac{\theta}{1+\theta}\right)^3
+\left(\frac{1-2\theta}{3-2\theta}\right)^3.
\end{equation}
Combining \eqref{appendix-C-7}, \eqref{appendix-C-9-0}, \eqref{appendix-C-9},
  \eqref{APX-C-5}, and \eqref{APX-C-6}  yields
\begin{equation}\label{appendix-C-11} \begin{aligned}
b_n/b_0\leq& \max\{f_2(\theta),f_3(\theta)\}a_n^{(-\alpha)}
 \leq \frac{2}{25}  a_n^{(-\alpha)},\quad n\geq 3.
\end{aligned} \end{equation}
From \eqref{APX-C-2}, we have
\begin{eqnarray}
{b_1}/{b_0}=\rho_2 + a_1^{(-\alpha)}\rho_1
=\alpha\left(\frac{\theta}{1+\theta\alpha}+ \frac{1-2\theta}{3-2\theta}\right)
\leq \frac{\alpha(1+\alpha)}{2+3\alpha}.\label{appendix-C-6-2}
\end{eqnarray}
Combining \eqref{appendix-C-11} and \eqref{appendix-C-6-2} yields
\begin{equation}\label{appendix-C-6-3} \begin{aligned}
\frac{b_0a_n^{(-\alpha)}+b_1a_{n-1}^{(-\alpha)}+ b_n}{b_0a_n^{(-\alpha)}}
\leq& \frac{27}{25}+\frac{3\alpha(1+\alpha)}{(2+\alpha)(2+3\alpha)}
\leq \frac{27}{25}+ \frac{2}{5}=\frac{37}{25} .
\end{aligned}\end{equation}
where we used ${a_{n-1}^{(-\alpha)}}
\leq \frac{n}{n-1+\alpha}{a_{n}^{(-\alpha)}}\leq \frac{3}{2+\alpha}{a_{n}^{(-\alpha)}}$
 for $n\geq 3$.

Using \eqref{APX-C-4}, we obtain
\begin{align*}
&  1+\alpha-2\rho_1  \geq 1+\alpha-2/3=({1+3\alpha})/{3},\\
& 1+\alpha-2\rho_2  \geq 1+\alpha- {2\alpha}/({2+\alpha})
= ({\alpha^2+\alpha+2})/({2+\alpha}),\\
& 1+\alpha-2\rho_3  \geq 1+\alpha-\rho({2+\alpha})
\geq 1+\alpha-({2+\alpha})/{3}  =({1+2\alpha})/{3},
\end{align*}
which leads to
\begin{align}
& \frac{\rho_1\rho_2}{1+\alpha-2\rho_3}+\frac{1}{1+\alpha}   \frac{\rho^2_2}{1+\alpha-2\rho_2}
+  \frac{\alpha}{2}\frac{\rho^2_1}{1+\alpha-2\rho_1} \notag\\
\leq &\frac{3\alpha}{1+2\alpha}\frac{\theta}{1+\theta\alpha}\frac{1-2\theta}{3-2\theta}
+\frac{\alpha(2+\alpha)}{(1+\alpha)(2+\alpha+\alpha^2)}
\frac{\alpha\theta^2}{(1+\alpha\theta)^2}\notag\\
&\qquad+\frac{3\alpha}{2(1+3\alpha)}\left(\frac{1-2\theta}{3-2\theta}\right)^2 \notag\\
\leq& {\theta} \frac{1-2\theta}{3-2\theta}+ \frac{3}{8}\left[\frac{\alpha\theta^2}{(1+\alpha\theta)^2}
+\left(\frac{1-2\theta}{3-2\theta}\right)^2\right] \notag\\
\leq& {\theta} \frac{1-2\theta}{3-2\theta}+\frac{3}{8}\left[\frac{\theta^2}{(1+\theta)^2}
+\left(\frac{1-2\theta}{3-2\theta}\right)^2\right]
< \frac{9}{100}.\qquad \text{(By \eqref{APX-C-7})} \label{appendix-C-14}
\end{align}
From    \eqref{appendix-C-3},
$a_j^{(-\alpha)}\leq a_2^{(-\alpha)}=\alpha(1+\alpha)/2$,  and
the following inequality,
\begin{equation*}\label{appendix-C-5} \begin{aligned}
\sum_{j=2}^{n-1}\rho_k^j a_{n-j}^{(-\alpha)}
=&\sum_{j=2}^{n-1}\frac{\rho_k^j}{\lambda^j} \left(\lambda^ja_{n-j}^{(-\alpha)}\right)
\leq a_{n}^{(-\alpha)} \sum_{j=2}^{n-1}\frac{\rho_k^j}{\lambda^j}
\leq a_{n}^{(-\alpha)} \frac{\rho_k^2/\lambda^2}{1-\rho_k/\lambda} \\
=&\frac{\rho^2_k}{\lambda(\lambda-\rho_k)} a_{n}^{(-\alpha)}
=\frac{4\rho^2_k}{(1+\alpha)(1+\alpha-2\rho_k)} a_{n}^{(-\alpha)},\quad k=1,2,3,
\end{aligned} \end{equation*}
we obtain
\begin{equation}\label{appendix-C-4} \begin{aligned}
\sum_{j=2}^{n-1}\frac{{b}_j}{b_0}{a}^{(-\alpha)}_{n-j}
\leq& \sum_{j=2}^{n-1} \left[(1+\alpha) \rho_1\rho_2\rho_3^{j-2}
+  \rho_2^j +  \frac{\alpha(1+\alpha)}{2}\rho^j_1\right]a_{n-j}^{(-\alpha)}\\
\leq&  {4}  \left(  \frac{\rho_1\rho_2}{1+\alpha-2\rho_3}
+\frac{1}{1+\alpha}\frac{\rho^2_2}{1+\alpha-2\rho_2}
+\frac{\alpha}{2}\frac{\rho^2_1}{1+\alpha-2\rho_1}\right)a_{n}^{(-\alpha)}\\
\leq & \frac{9}{25}a_{n}^{(-\alpha)}. \qquad \text{(By \eqref{appendix-C-14})}
\end{aligned} \end{equation}
Combining \eqref{appendix-C-6-3} and \eqref{appendix-C-4}    yields
\begin{equation*}\label{appendix-C-16}
b_0^{-1}\sum_{j=0}^{n}{b}_j{a}^{(-\alpha)}_{n-j}=\sum_{j=2}^{n-1}({b}_j/b_0){a}^{(-\alpha)}_{n-j}
+\left(b_0a_n^{(-\alpha)}+b_1a_{n-1}^{(-\alpha)}+ b_n\right)/b_0
\leq \frac{46}{25}a_n^{(-\alpha)},
\end{equation*}
which leads to
\begin{equation*} \label{appendix-C-20}\begin{aligned}
c_n=& 2b_0{a}^{(-\alpha)}_n  -b_0^{-1}\sum_{j=0}^{n}{b}_j{a}^{(-\alpha)}_{n-j}
\geq b_0\left(2 - \frac{46}{25}\right)a_n^{(-\alpha)}
=\frac{4}{25}b_0{a}^{(-\alpha)}_n \geq 0,\,n\geq 3.
\end{aligned}\end{equation*}

\textbf{Step 2)}
Prove $c_n> 0$ for $n=0,1,2$.  Obviously,
$c_0= 2b_0  - {b}_0\geq b_0 >0$ and
\begin{equation*}
c_1= (2-\alpha) b_0- b_1
\geq  \left(2 -\alpha -\frac{\alpha+\alpha^2}{2+3\alpha}\right)b_0
=\frac{4(1-\alpha^2) + 3\alpha}{2+3\alpha}b_0>0,
\end{equation*}
where we used \eqref{appendix-C-6-2}.
By \eqref{APX-C-2} and \eqref{APX-C-4}, we obtain
\begin{equation*} \label{appendix-C-d-n2}\begin{aligned}
b_2/b_0
=&\frac{\alpha^2\theta}{1+\theta\alpha}f_1(\theta)
+\frac{\alpha(1+\alpha)}{2}\left(\frac{1-2\theta}{3-2\theta}\right)^2
\leq  \frac{\alpha^2}{2+\alpha} \frac{1+\alpha}{2+3\alpha}
+\frac{\alpha(1+\alpha)}{2}\frac{1}{9}.
\end{aligned}\end{equation*}
From the above inequality   and \eqref{appendix-C-6-2},  we have
 \begin{align*}
c_2/b_0
=&  {a}^{(-\alpha)}_2
-\left(({b}_1/b_0)\alpha+{b}_2/b_0\right)\\
\geq&   \frac{\alpha(1+\alpha)}{2}-\left(\frac{\alpha^2(1+\alpha)}{2+3\alpha}
+\frac{\alpha^2}{2+\alpha} \frac{1+\alpha}{2+3\alpha}
+  \frac{\alpha(1+\alpha)}{2}  \frac{1}{9}\right) \\
= &{\alpha(1+\alpha)}\left(\frac{4}{9} - \frac{\alpha}{2+3\alpha}
-\frac{\alpha}{(2+\alpha)(2+3\alpha)}\right) \\
\geq&{\alpha(1+\alpha)}\left(\frac{4}{9} - \frac{1}{5}
-\frac{1}{15}\right)=\frac{8\alpha(1+\alpha)}{45}>0.
\end{align*}
The proof is complete.
\end{proof}

\section{Proofs of Lemmas \ref{lem4-2} and \ref{lem-10}}\label{sec-D}
Proof of Lemma \ref{lem4-2}.
\begin{proof}
For $\sigma\geq 0$, \eqref{eq:zeng-a2-3-2} follows   from
$\sum_{j=1}^{n-1}(n-j)^{-\alpha-1}j^{\sigma}
\leq n^{\sigma}\sum_{j=1}^{n-1}(n-j)^{-\alpha-1}
\lesssim  n^{\sigma} \sum_{j=1}^{\infty}j^{-\alpha-1} \lesssim n^{\sigma}$.
Next, we prove  \eqref{eq:zeng-a2-3-2} for $\sigma<0$.

For   $n\geq 2$, there exists   $j_n=\lceil n/2 \rceil$ and
$x_0=j_n/n \in (0,1)$ such that
\begin{align*}
\sum_{j=1}^{n-1}(n-j)^{-\alpha-1}j^{\sigma}
=&  \sum_{j=1}^{j_n}(n-j)^{-\alpha-1}j^{\sigma}
+ \sum_{j=j_n+1}^{n-1}(n-j)^{-\alpha-1}j^{\sigma}\\
\leq& \sum_{j=1}^{j_n}(n-j_n)^{-\alpha-1}j^{\sigma}
+ \sum_{j=j_n+1}^{n-1}(n-j)^{-\alpha-1}j_n^{\sigma}\\
\lesssim&  n^{-\alpha-1}\sum_{j=1}^{n-1}j^{\sigma}
+  n^{\sigma} \sum_{j=1}^{n-1}j^{-\alpha-1}.
\end{align*}
Using $\sum_{j=1}^{n-1}j^{-\alpha-1}\lesssim 1$ and
$\sum_{j=1}^{n-1}j^{\sigma} \lesssim n^{\sigma+1}\log(n)$
completes the proof of \eqref{eq:zeng-a2-3-2}.

By $0\leq a_{n}^{(-\alpha)}\lesssim n^{\alpha-1}$, one has
$$ \sum_{j=1}^{n}a_{n-j}^{(-\alpha)}j^{\sigma}\lesssim n^{\sigma}+ \sum_{j=1}^{n-1} {(n-j)}^{\alpha-1}j^{\sigma}.$$
Repeating the proof of  \eqref{eq:zeng-a2-3-2} finishes the proof of \eqref{eq:zeng-a2-3-3}.
The proof is completed.
\end{proof}


Proof of Lemma  \ref{lem-10}.
\begin{proof}
The condition  \eqref{cond-2} and Lemma \ref{lem2-1} yield the following linear system
\begin{equation*} \begin{aligned}
\sum_{j=1}^mw^{(m)}_{n,j}j^{\sigma_k}
&= \frac{\Gamma(\sigma_k+1)}{\Gamma(\sigma_k+1-\alpha)}n^{\sigma_k-\alpha}-\sum_{j=1}^n\omega_{n-j}^{(\alpha)}j^{\sigma_k}\\
&=O(n^{-\alpha-1}) + O(n^{\sigma_k-\alpha-p}), \qquad 1\leq k \leq m,
\end{aligned}\end{equation*}
which leads to
\begin{eqnarray}
&& |w^{(m)}_{n,k}|\lesssim n^{-\alpha-1} + n^{\sigma_m-p-\alpha},\quad 1\leq k \leq m.\label{assumption-e}
\end{eqnarray}
Combining \eqref{eq:Wnm}, Lemma \ref{lem4-2}, $\omega_n^{(\alpha)}=O(n^{-\alpha-1})$,
 \eqref{assumption-a}, and \eqref{assumption-e} leads to
\begin{equation}\label{eq:Wnk}
|W^{(m)}_{n,k}| \lesssim  n^{\max\{-\alpha-1,\sigma_m-p-\alpha\}},\quad 1\le k \le m.
\end{equation}
Combining \eqref{eq:zeng-a2-2}, \eqref{eq:zeng-a2-3-3}, and \eqref{eq:Wnk} yields \eqref{eq:Wnk-2},
which ends the proof.
\end{proof}

\section{Proof of Theorem \ref{thm6-1}}\label{sec-E}
\begin{proof}
We show a sketch of the proof.
Let $\theta^n={}_Fu_{h}^n-u_h^n$.
By  \eqref{s6-1}, \eqref{FLMM-3}, and $\theta^n=\varepsilon_{n}=0$ for $0\leq n \leq n_0-1$, we obtain
\begin{equation}\label{s6-3}\begin{aligned}
\frac{1}{\tau^{\alpha}}\sum_{j=n_0}^n\omega_{n-j}^{(\alpha)}(\theta^j,v)&
+(\nabla \theta^n,\nabla v)=\left(P_{h}\left(f({}_Fu^{n}_h)-f(u^{n}_h)\right),v\right)\\
&-\frac{1}{\tau^{\alpha}}\sum_{j=1}^{n-n_0}\varepsilon_{n-j}\omega_{n-j}^{(\alpha)}(\theta^j + u_h^j-u_h^0,v).
\end{aligned} \end{equation}

Similar to \eqref{e3.22}, we can obtain the equivalent form of \eqref{s6-3} as
\begin{equation}\label{s6-4}
(\mathcal{A}^{\alpha,n_0-1}_{\tau} \theta^n,v) + (\mathcal{B}^{\alpha,n_0-1}\nabla \theta^j,\nabla v)
=  \big( \mathcal{B}^{\alpha,n_0-1}\widetilde{F}^n ,v\big)
-\sum_{j=n_0}^{n} \widetilde{b}_{n-j}(\theta^j,v)-(H^n,v).
\end{equation}
where    $\widetilde{F}^n=f({}_Fu^{n}_h)-f(u^{n}_h))$, and
\begin{equation*}\begin{aligned}
\widetilde{b}_n= \frac{1}{\tau^{\alpha}}\sum_{j=n_0}^nb_{n-j}\varepsilon_{j}\omega_{j}^{(\alpha)},\qquad
H^n=\sum_{k=n_0}^{n}  {b}_{n-k}\sum_{j=1}^{k-n_0} \varepsilon_{k-j}\omega_{k-j}^{(\alpha)}(u_h^j-u_h^0).
\end{aligned} \end{equation*}
By $\omega_{n}^{(\alpha)}=O(n^{-\alpha-1})$ and \eqref{eq:zeng-a2-3-2},    we
can easily obtain
\begin{equation*}
|\widetilde{b}_n | \lesssim \varepsilon  n^{-\alpha-1}.
 \end{equation*}
By the boundedness of $\|u_h^n\|$, $b_n=O(n^{-\alpha-1})$, and $\omega_n^{(\alpha)}=O(n^{-\alpha-1})$, we  derive
\begin{equation*}
\|H^n\|\lesssim \sum_{k=n_0}^{n}  |{b}_{n-k}|\sum_{j=1}^{k} |\varepsilon_{k-j}\omega_{k-j}^{(\alpha)} |
\lesssim \varepsilon \sum_{j=n_0}^{n} |{b}_{n-k}|\lesssim  \varepsilon.
\end{equation*}
Following the proof of  Theorem \ref{thm2-1}, we can easily  arrive at \eqref{s6-2}, the details are omitted.
The proof is complete.
\end{proof}

\section*{Acknowledgment}
The authors are grateful to Professor Dongfang Li for his valuable comments on an earlier version of this paper. This work has been supported by the National Natural Science Foundation of China (12001326, 11771254), Natural Science Foundation of Shandong Province (ZR2019ZD42, ZR2020QA032), China
Postdoctoral Science Foundation (BX20190191, 2020M672038), the startup fund from Shandong
University (11140082063130). GEK would like to acknowledge support by the MURI/ARO on Fractional
PDEs for Conservation Laws and Beyond: Theory, Numerics and Applications (W911NF-15-1-0562)".

\bibliographystyle{ws-m3as}
\bibliography{fded19m1y17}

\end{document}